\theoremstyle{plain} 
\newtheorem*{theo*}{Theorem}
\newtheorem*{cor*}{Corollary}
\newtheorem*{con*}{Conjecture}
\newtheorem{theo}{Theorem}[section] 
\newtheorem{prop}[theo]{Proposition}
\newtheorem{cor}[theo]{Corollary}
\newtheorem{lem}[theo]{Lemma}
\theoremstyle{definition}
\theoremstyle{definition}
\newtheorem{defin}[theo]{Definition}
\newtheorem{ex}[theo]{Example}
\theoremstyle{remark}
\newtheorem{rem}[theo]{Remark}
\newtheorem*{rem*}{Remark}
\newtheorem*{cap*}{Caption}
\newtheorem*{genref}{General References}
\newcommand{\Z}{\mathbb{Z}}
\newcommand{\Zplus}{\Z_{>0}}
\newcommand{\Zpluseq}{\Z_{\geq0}}
\newcommand{\C}{\mathbb{C}}
\newcommand{\Uone}{\operatorname{U}(1)}
\newcommand{\Aut}{\operatorname{Aut}}
\newcommand{\Hom}{\operatorname{Hom}}
\newcommand{\Mod}{\operatorname{Mod}}
\newcommand{\Hilb}{\operatorname{Hilb}}
\newcommand{\catVec}{\operatorname{Vec}}
\newcommand{\Rep}{\operatorname{Rep}}
\newcommand{\Repu}{\operatorname{Rep}^{\operatorname{u}}}
\newcommand{\Repf}{\operatorname{Rep}^{\operatorname{f}}}
\newcommand{\opu}{\operatorname{u}}
\newcommand{\J}{\mathcal{J}}
\newcommand{\A}{\mathcal{A}}
\newcommand{\B}{\mathcal{B}}
\newcommand{\parzero}{{\overline{0}}}
\newcommand{\parone}{{\overline{1}}}
\newcommand{\scalar}{(\cdot|\cdot)}
\newcommand{\curlyscalar}{\{\cdot|\cdot\}}
\newcommand{\Tr}{\operatorname{Tr}}
\newcommand{\one}{\mathbf{1}}
\newcommand{\id}{\operatorname{id}}
\def\III{{I\!I\!I}}
\begin{document}
	
\author{Sebastiano Carpi}
	\address{Dipartimento di Matematica, Universit\`a di Roma ``Tor Vergata'', Via della Ricerca Scientifica, 1, 00133 Roma, Italy\\
		E-mail: {\tt carpi@mat.uniroma2.it}
	}
\author{Tiziano Gaudio}
	\address{Department of Mathematics and Statistics, Lancaster University, Lancaster LA1 4YF, UK \\
			E-mail: {\tt t.gaudio2@lancaster.ac.uk}
	}
\author{Luca Giorgetti}
    \address{Dipartimento di Matematica, Universit\`a di Roma ``Tor Vergata'', Via della Ricerca Scientifica, 1, 00133 Roma, Italy\\
    	E-mail: {\tt giorgett@mat.uniroma2.it}
    }
\author{Robin Hillier} 
	\address{Department of Mathematics and Statistics, Lancaster University, Lancaster LA1 4YF, UK\\
		E-mail: {\tt r.hillier@lancaster.ac.uk}
	}
\title[Haploid algebras in $C^*$-tensor categories and the Schellekens list]{Haploid algebras in $C^*$-tensor categories and the Schellekens list
}

\date{11 May 2023}

\maketitle

\begin{center} {\small {\it Dedicated to Roberto Longo on the occasion of his 70th birthday}}
\end{center}
	
\begin{abstract}
We prove that a haploid associative algebra in a $C^*$-tensor category $\mathcal{C}$ is equivalent to a Q-system (a special $C^*$-Frobenius algebra) in $\mathcal{C}$ if and only if it is rigid. This allows us to prove the unitarity of all the 70 strongly rational holomorphic vertex operator algebras with central  charge $c=24$ and non-zero weight-one subspace, corresponding to entries 1--70 of the so called  Schellekens list. Furthermore, using the recent generalized deep hole construction of these vertex operator algebras, we prove that they are also strongly local in the sense of Carpi, Kawahigashi, Longo and Weiner and consequently we obtain some new holomorphic conformal nets associated to the entries of the list. Finally, we completely classify the simple CFT type vertex operator superalgebra extensions of the unitary $N=1$ and $N=2$ super-Virasoro vertex operator superalgebras with central charge $c<\frac{3}{2}$ and $c<3$ respectively, relying on the known classification results for the corresponding superconformal nets.	
\end{abstract}
	

\tableofcontents

\section{Introduction}

The notion of (associative) algebra over a field, namely a vector space with a bilinear associative multiplication, is ubiquitous in mathematics. If the underlying category of vector spaces $\catVec$ is replaced with another \emph{tensor} category $\mathcal{C}$, one can also talk of algebras (or algebra objects) \emph{in} $\mathcal{C}$. 
Namely, an algebra $A = (X,m,\iota)$ is a triple where $X$ is an object in $\mathcal{C}$, and $m:X\otimes X \to X$ and $\iota: \id_{\mathcal{C}} \to X$ are morphisms in $\mathcal{C}$ playing the role of multiplication and unit. Here, $\id_{\mathcal{C}}$ denotes the tensor unit, which is just the base field $\mathbb{F}$ in the case $\mathcal{C} = \catVec$, seen as a vector space (and algebra) over itself. The associativity of  $A$ is naturally expressed in terms of the associativity constraints in $\mathcal{C}$. One can also consider tensor categorical versions of algebras with more structure, such as coalgebras or Frobenius algebras.  In this paper we will pay special attention to haploid algebras (also called irreducible or connected algebras) in $\mathcal{C}$. An algebra  $A = (X,m,\iota)$ in $\mathcal{C}$ is haploid if $\mathrm{dim} \Hom (\id_{\mathcal{C}}, X) = 1$.

In relatively recent years the study of algebras in tensor categories has become of central importance in different areas of mathematics, e.g.\ in the theory of subfactors, conformal field theory (CFT), topological field theories and  quantum groups, see e.g.\  
\cite{BKLR15,FS10,Kaw15,HKL15,KO02,Mue10b,NY18}.

Motivated by the applications to operator algebras and quantum field theory, which are often constructed and studied as families of operators on a complex Hilbert space, 
we shall restrict ourselves to the case $\mathbb{F} = \C$, and we will mainly consider  $C^*$-tensor categories (also known as unitary tensor categories), see e.g.\ \cite{BKLR15,LR97,NT13}.

 If $\mathcal{C}$ is a $C^*$-tensor category, it is  useful to consider algebra objects in $\mathcal{C}$ with a unitary structure, such as $C^*$-Frobenius algebras or Q-systems (special $C^*$-Frobenius algebras), see e.g.\ \cite{BKLR15, Gui21c,Lon94,NY18}.  For example, Q-systems play an important role in the theory of subfactors. Namely, a subfactor with finite Jones index is equivalently well-described by a Q-system in a tensor category of endomorphisms or bimodules of a factor. Haploid Q-systems (also called irreducible or connected Q-systems) correspond to irreducible subfactors.

 A natural question that arises is the following: can we characterize those algebra objects in a $C^*$-tensor category $\mathcal{C}$ that are equivalent to Q-systems? Similar questions have been raised e.g.\ in \cite{GS12}, in \cite[Section 3.1]{BKLR15} and in \cite{CHJP22}. In this paper, we give a complete answer to this question in the haploid case. We prove that a haploid algebra object in a $C^*$-tensor category is equivalent to a Q-system if and only if it is rigid in the sense of \cite{KO02},
 cf.\ Theorem \ref{theo:haploid_rigid_iff_Q-system}. If $A$ is \emph{not} haploid, the conclusion of the previous theorem is false. Namely, the algebra $A$ may be rigid but not equivalent to a Q-system, as we show in Example \ref{ex:CE1}. Moreover, the haploid condition alone does not imply rigidity, as we show in Example \ref{ex:CE2}.  We also prove that if two normalized haploid Q-systems are equivalent as algebra objects, then they are unitarily equivalent (and thus, they also give rise to isomorphic subfactors), cf.\ Theorem \ref{theo:algebras_equiv_as_Qsystems}. The rigidity of $A$ is equivalent to the existence of a Frobenius structure on $A$ by \cite{FRS02}, cf.\ Remark \ref{rem:rigid_iff_Frobenius}. Hence, our results show that there is a one-to-one correspondence between equivalence classes of haploid Frobenius algebras in a $C^*$-tensor category $\mathcal{C}$ and unitary equivalence classes of 
normalized haploid Q-systems in $\mathcal{C}$. As a consequence, we prove that every  semisimple indecomposable module category over a unitary fusion category is equivalent (as a module category) to a $C^*$-module category, answering to a question in \cite{Reu19}.

Our main motivations come from various applications of the above results to chiral CFT and related mathematical structures, such as 
vertex operator algebras (VOAs), subfactors and conformal nets. 

We first briefly describe  here an application concerning  the general relations between subfactors and CFT, see e.g.\ \cite{EK98,Gan06,Kaw15,Kaw19}, from the VOA point of view. Let $\mathcal{C}$ be a unitary fusion category and let $N$ be the hyperfinite type $\III_1$ factor. Then, by  \cite{HY00,Pop95}, see also \cite[Section 2]{Izu17} and \cite[Theorem 3.10]{HP20},  $\mathcal{C}$ can be realized in an essentially unique way as a full subcategory of $\operatorname{End}(N)$ so that we can assume that $\mathcal{C} \subset \operatorname{End}(N)$. Now, let $V$ be a strongly rational VOA so that $\Rep(V)$ is a modular tensor category \cite{Hua08} and thus it is in particular a braided tensor category. By \cite{HKL15,KO02} the simple CFT type extensions of $V$ correspond to rigid haploid commutative algebras in $\Rep(V)$. Hence, if $\Rep(V)$ is tensor equivalent to a unitary fusion category $\mathcal{C} \subset \operatorname{End}(N)$ then, by our result, every simple CFT type extension of $V$ gives rise to a haploid commutative Q-system (also called haploid local Q-system) in $\mathcal{C}$. Then, it also gives rise  to a finite index irreducible subfactor $N \subset M$ which is in fact a braided subfactor in the sense of \cite{BEK99}. In this way one obtains braided subfactors from VOA extensions. 

More direct connections between subfactor theory and CFT come from the theory of conformal nets, see e.g.\ \cite{Kaw15,Kaw19,LR95}. If $\mathcal{A}$ is a completely rational conformal net, then the irreducible (necessarily finite index) local extensions $\mathcal{B} \supset \mathcal{A}$ directly give braided type $\III_1$ subfactors: $\mathcal{B}(I) \supset \mathcal{A}(I)$ for every interval $I\subset S^1$ of the circle. Moreover, such extensions correspond to haploid commutative Q-systems with trivial twist in the modular tensor category $\Repf(\mathcal{A})$ of finite index representations of $\mathcal{A}$.  Now, if $V$ is a strongly rational VOA with $\Rep(V)$ equivalent to  $\Repf(\mathcal{A})$ as a modular tensor category, then, arguing as before, we get a one-to-one correspondence between simple CFT type VOA extensions of $V$ and local irreducible extensions of $\mathcal{A}$, cf.\ Theorem \ref{theo:equiv_VOA_net_extensions}. This allows to transfer classification results for VOA extensions to classification results for conformal net extensions and vice versa. As an example we will give a complete classification of the simple CFT type vertex operator superalgebras (VOSAs) extensions of the unitary $N=1$ and $N=2$ super-Virasoro VOSAs with central charge 
$c< 3/2$ and $c<3$ repectively by transferring to the vertex algebra setting the classification results for superconformal nets in \cite{CKL08,CHKLX15}, cf.\ Theorem \ref{theo:classification_N=1} and Theorem \ref{theo:classification_N=2}.

A typical situation where we have that $\Rep(V)$ is equivalent to  $\Repf(\mathcal{A})$ as a modular tensor category comes from the general relation between unitary VOAs and 
conformal nets introduced and studied in \cite{CKLW18}. In the latter work it was shown that a simple unitary VOA $V$ satisfying a certain analytic condition called strong locality gives rise to a conformal net $\mathcal{A}_V$ in a natural way.  Moreover, many examples of unitary VOAs have been shown to be strongly local. It is conjectured in \cite{CKLW18} that every simple unitary VOA is strongly local and that every conformal net comes from a strongly local VOA. Moreover, it has been conjectured that if $V$ is a unitary strongly rational VOA, then $\mathcal{A}_V$ is completely rational and $\Rep(V)$ is equivalent to $\Repf(\mathcal{A}_V)$ as a modular tensor category, see \cite{Kaw15,Kaw19}. The latter conjectures have been recently proved for a relevant family of unitary strongly rational VOAs, thanks to an impressive progress on the representation theory aspects of the correspondence between VOAs and conformal nets introduced in \cite{CKLW18}, see e.g.\ \cite{Gui20,Ten19a}. 

Our abstract characterization of haploid Q-systems in $C^*$-tensor categories and its consequences on module categories over $C^*$-tensor categories (see below) are sufficiently strong to consider applications beyond the study of local extensions of chiral CFTs. The main point is that we do not assume the haploid algebras to be commutative nor the underlying $C^*$-tensor category  to be braided. This leaves open the possibility to  apply our results e.g.\ in the setting of boundary CFT, where non-commutative haploid algebras appear naturally, see e.g.\ \cite{BKL15,BKLR15,CKL13,FRS02,FS10,KR09,LR04,LR09}.
In this paper we will not further elaborate on these potential applications to boundary CFT. 

We now discuss the application that gave the first motivation to this work as also emphasized in its title.  It concerns the unitarity of VOAs in general and the unitarity of holomorphic (i.e.\  with trivial representation theory) VOAs with central charge $c=24$ and non-zero weight-one subspace (the Schellekens list \cite{Sch93}). 
In the VOA setting, unitarity is a special requirement that turns out to be satisfied in an important but specific class of models. It has naturally appeared  more or less explicitly in VOA theory since its beginning in various ways, such as the existence of a positive definite invariant Hermitian form, the existence of a real form with a positive definite invariant bilinear form, and found various applications besides the correspondence with conformal nets, see e.g.\ \cite{CKLW18,DL14,DL15,FLM88,Mas20,Miy04}. 

The unitarity of a VOA can be proved in various ways. First of all there are direct methods. A VOA can be shown to be unitary by a direct construction of a positive definite invariant Hermitian form, see e.g.\ \cite{CKLW18,CTW22b,DL14,FLM88,Miy04}. In various cases the hardest part is the problem of positivity that can be solved e.g.\ by relying on unitarity results for the underlying infinite dimensional Lie algebras \cite{Kac94,KR87} or non-linear Lie algebras \cite{CTW22b}. These direct methods already produce a remarkable family of unitary VOAs such as those constructed from affine Lie algebras, the Virasoro algebra,  the $\mathcal{W}_3$ algebras, even lattices and the famous Moonshine VOA, see \cite{CKLW18,CTW22,DL14}.

A first indirect method comes from embeddings into unitary VOAs. In the terminology used in \cite{CKLW18} the unitary subalgebras of unitary VOAs are
unitary VOAs \cite[Section 5.4]{CKLW18}. This gives many other examples of unitary VOAs such as orbifold VOAs and coset VOAs.  However, many interesting VOA examples come from taking extensions rather than from taking subalgebras. Indeed, the already mentioned holomorphic VOAs with $c=24$ are of this type (they are extensions of unitary affine VOAs) and thus,  it is desirable to have some easy-to-handle method to prove unitarity for extensions of unitary VOAs. One could be tempted to guess that if an extension $U$ of a unitary VOA $V$ is a unitarizable $V$-module then it is unitary. Unfortunately, such a general result is presently not available. Even for $\mathbb{Z}_n$-simple current extensions by a unitary simple current such general unitarity result  is known only for the case $n=2$ \cite[Theorem 3.3]{DL14}, see also \cite[Corollary 2.7.3]{Gau21} and \cite{CGH}. 

Remarkable progress in the study of unitarity of VOA extensions has recently been made in \cite{Gui21c}. Let $V$ be a unitary strongly rational VOA and assume that $V$ is strongly unitary \cite{Gui21c}, i.e., that every $V$-module is unitarizable or, in other words, that the $C^*$-category $\Repu(V)$ of unitary $V$-modules is linearly equivalent to the modular tensor category $\Rep(V)$.   In \cite{Gui19I,Gui19II} Gui defined certain non-degenerate Hermitian forms on the duals of the spaces of the intertwining operators of $V$ and showed that if these forms are positive definite, then they naturally turn $\Repu(V)$ into a unitary  modular tensor category equivalent to $\Rep(V)$. In particular, $\Repu(V)$ becomes a $C^*$-tensor category tensor equivalent to $\Rep(V)$. 
If $V$ is strongly rational, strongly unitary and satisfies the above positivity condition it is said to be completely unitary \cite{Gui21c}. Many strongly rational unitary VOAs, including rational affine VOAs, lattice VOAs and unitary rational Virasoro VOAs, have been shown to be completely unitary, see e.g.\ 
\cite{Gui19II,Gui20,Gui21a,Ten19a}. It follows directly from the results in \cite{Gui21c} that if $U$ is a simple CFT type VOA extension of a completely unitary 
VOA $V$ then $U$ is unitary (and in fact completely unitary) if and only if the corresponding commutative haploid algebra in $\Repu(V)$ is equivalent to a Q-system. Hence, since rigidity of the latter haploid algebra follows from the simplicity of $U$ \cite{KO02}, it follows from our characterization of haploid Q-systems in $C^*$-tensor categories that every simple CFT type extension of a completely unitary VOA is unitary, and in fact completely unitary, cf.\ Theorem \ref{theo:unitarity_voa_extensions}. 
In particular, every simple CFT type extension of a unitary rational affine VOA is unitary. 

As already mentioned, one of our motivations comes from the unitarity problem for the holomorphic VOAs with $c=24$. Holomorphic VOAs play a central role in vertex algebra theory. The famous Moonshine VOA \cite{FLM88}, which was one of the motivations for developing vertex algebra theory \cite{Bor86} and which was crucial in the proof of the Moonshine conjecture \cite{Bor92}, is a holomorphic VOA with $c=24$. Moreover, holomorphic VOAs appear to be deeply related with modular forms \cite{DM00} and with the geometry of moduli spaces of complex curves \cite{Cod20}. They are expected to correspond to holomorphic conformal nets \cite{KL06,KS14}. The latter play an important role in the search for a CFT realization of exotic subfactors such as the Haagerup subfactor
\cite{EG11,EG22,Bis16,Bis17}. 

The central charge $c$ of a holomorphic VOA must be a positive integer multiple of 8. For  $c=8$ and $c=16$ the holomorphic VOAs were classified in  \cite{DM04a} and turned out to be exactly the lattice VOAs associated to the self-dual even positive definite lattices of rank 8 and 16, respectively.   The classification 
of holomorphic VOAs $V$ with $c=24$ and weight-one subspace $V_1\neq \{ 0\}$ began with the seminal work of Schellekens \cite{Sch93} and has recently been completed thanks to the work of various authors, see e.g.\ \cite{EMS20,ELMS21,LS19,MS20}. There are exactly 70 such VOAs  corresponding to entries 1--70 of the Schellekens list \cite{Sch93}. The Moonshine VOA is a holomorphic VOA with $V_1= \{0 \}$ and corresponds to entry 0 in the list. It has been conjectured in \cite{FLM88} that the Moonshine VOA is the \emph{only} holomorphic VOA with $c=24$ and $V_1= \{0 \}$. A proof of this conjecture would complete the classification by showing that there are exactly 71 holomorphic VOAs with $c=24$ and that they are in one-to-one correspondence with entries 0--70 in \cite{Sch93}.  

Let us now com back to the unitarity problem. The Moonshine VOA is known to be unitary \cite{DL14,CKLW18}. Entry 1 in \cite{Sch93} corresponds to the Leech lattice VOA, which is also known to be unitary. The remaining entries 2--70 are simple CFT type extensions of rational affine VOAs, the subVOAs generated by the $V_1$ subspaces, which are semisimple Lie algebras for these entries. Hence, by our results they are all unitary, cf.\  Theorem \ref{theo:unitarity_holomorphicVOAs}.
 Some of them were already known to be unitary. For example, 24 of the 71 entries correspond to  the lattice VOAs associated to Niemeier lattices. On the other hand, unitarity was not known in various cases.  

Once unitarity is proved, it is natural to ask if these holomorphic VOAs are also strongly local and generate holomorphic conformal nets. Using the recent generalized deep hole construction \cite{MS20} of the holomorphic VOAs $V$ with $c=24$ and $V_1  \neq  \{0\}$ and the results on energy bounds for VOA extensions in \cite{CT}, we are able to prove strong locality for entries 2--70 in \cite{Sch93} 
cf.\ Theorem \ref{theo:strong_locality_holomorphic_VOAs} (the strong locality for entries 0--1 was already known). In this way, we get a family of 71 holomorphic conformal nets with $c=24$ and show that the holomorphic conformal nets already constructed by different methods \cite{KL06,KS14,Xu09,Xu18} are a (proper) subset of this family. From these holomorphic conformal nets, one can obtain many interesting finite index subfactors arguing e.g.\ as in \cite[Section 4]{Xu18}.

A different proof of the unitarity of strongly rational holomorphic VOAs with c=24 and non-zero weight-one subspaces has been found independently  by Lam in \cite{Lam22}.

This paper is organized as follows. In Section \ref{sec:cat_prelim}, we review the necessary notions of $C^*$-tensor category theory. In particular, the notion of algebra, Frobenius algebra, $C^*$-Frobenius algebra and Q-system. We recall or prove general results to be used in the subsequent sections. We also recall the definition of $A$-module, unitary $A$-module and local (or dyslectic) $A$-module, the latter in the case of unitary braided tensor categories. Note that we do not necessarily restrict ourselves to rigid, nor semisimple, unitary tensor categories, but we do assume the tensor unit to be simple.

In Section \ref{sec:main_theo}, we completely settle the problem of understanding when a (Frobenius) algebra in a unitary tensor category can be \emph{unitarized}, namely when it is equivalent to a Q-system, and we discuss some first applications of this characterization. 

In Section \ref{sec:applications}, we give various general chiral CFT applications  with some first examples.

In Section \ref{sec:unitarity_schellekens_list}, we consider applications to the holomorphic VOAs/conformal nets with $c=24$ (the Schellekens list). 

In Section \ref{sec:superconformal_VOAs}, we discuss our classification results  for $N=1$ superconformal VOSAs with $c<3/2$ in the unitary discrete series  and for $N=2$ superconformal VOSAs with $c<3$ in the unitary discrete series. 

In the appendix, we prove a ``unitarization'' result for the compact automorphism groups of unitary VOAs which is needed in our proof of strong locality of holomorphic VOAs with $c=24$. In the course of the proof, we show various properties of the automorphism groups of unitary VOAs which appear to be of independent interest and which can be seen as a complement to \cite[Section 5.3]{CKLW18}.

\section{Preliminaries on tensor categories}	\label{sec:cat_prelim}

Let $\mathcal{C}$ be a \textbf{$C^*$-tensor category} \cite{GLR85,DR89,LR97}, not necessarily rigid, nor semisimple, but with simple (i.e., irreducible) tensor unit $\id_\mathcal{C}$. In particular, there is an anti-linear map 
\begin{equation}
\Hom(X,Y) \ni S \mapsto S^* \in \Hom(Y,X)
\end{equation}
such that $S^{**}=S$ for all morphisms $S$ in $\mathcal{C}$. Here $\Hom(X,Y)$ denotes the space of morphisms $X \to Y$ for the objects $X,Y \in \mathcal{C}$.  The morphism spaces are complex Banach spaces and their norm satisfies the $C^*$-identity
$\|S^*S\| = \|S\|^2$. Moreover, we assume that, for all $X,Y \in \mathcal{C}$ and all $S \in \Hom(X,Y)$, $S^*S$ is a positive element in the unital $C^*$-algebra 
$\Hom(X,X)$.   

Moreover, in order to accommodate for the applications of our main result to conformal nets and vertex operator algebras in a more uniform way,  and in spite of  
Mac Lane's coherence theorem \cite{Mac98},
we do not consider only \emph{strict} tensor categories. See \cite[Definition 2.1.1]{NT13}. 
Given $X,Y,Z$ objects in $\mathcal{C}$, we denote by
\begin{equation}
	a_{X,Y,Z}: X\otimes (Y\otimes Z)\longrightarrow (X\otimes Y)\otimes Z
\end{equation}
and
\begin{equation}
	l_X: \id_\mathcal{C}\otimes X\longrightarrow X, \qquad r_X:X\otimes \id_\mathcal{C}\longrightarrow X
\end{equation}
respectively the associator and the left/right unitors in $\mathcal{C}$, which make the usual pentagon and triangle diagrams commute and which are assumed to be unitary in the $C^*$ context. 
We refer to \cite{Mue10b,EGNO15,BKLR15,NY18,BCEGP20} for general background. 

Recall the following definitions:

\begin{defin}
	An object $X$ in $\mathcal{C}$ is said to be \textbf{rigid} (or dualizable) if there is an object $\overline{X}$ (called conjugate or dual) and a pair of morphisms $e_X : \overline{X} \otimes X \to \id_\mathcal{C}$ (called evaluation), $i_X : \id_\mathcal{C} \to X \otimes \overline{X}$ (called coevaluation), fulfilling the \emph{conjugate equations} (also called rigidity or duality equations):
	\begin{equation} \label{eq:conj_eqns}
		\begin{split}
			r_X(\one_X\otimes e_X)a_{X,\overline{X},X}^{-1}(i_X\otimes \one_X) l_X^{-1} &= \one_X \\
			l_{\overline{X}}(e_X\otimes \one_{\overline{X}})a_{{\overline{X}},X,{\overline{X}}}(\one_{\overline{X}}\otimes i_X) r_{\overline{X}}^{-1}
			&= \one_{\overline{X}}
		\end{split}
	\end{equation}
	where $\one_X$ denotes the identity morphism in $\Hom(X,X)$. The category $\mathcal{C}$ itself is called \emph{rigid} 
	if every object $X$ in $\mathcal{C}$ is rigid.  For a rigid object $X$ in $\mathcal{C}$, denote by $d(X)$ the 
intrinsic/$C^*$-tensor categorical \emph{dimension} of $X$. Namely, the 
positive number $d(X)$ defined by  $d(X) \one_{\id_\mathcal{C}}  = e_X e_X^* = i_X^* i_X$, where $e_X$ and $i_X$ are a \emph{standard solution} 
of the conjugate equations \eqref{eq:conj_eqns}, see \cite{LR97}, cf.\ \cite{GL19}.	
\end{defin}

\begin{rem}
	The equations in \eqref{eq:conj_eqns} are usually taken as a definition of \emph{left} duals. One needs another pair of morphisms $e'_X : X \otimes \overline{X} \to \id_\mathcal{C}$, $i'_X : \id_\mathcal{C} \to \overline{X} \otimes X$, fulfilling two equations analogous to \eqref{eq:conj_eqns}, in order to define \emph{right} duals. In the $C^*$, or even *, context, duals are automatically both left and right when they exist, as one can take $e'_X = i_X^*$ and $i'_X = e_X^*$. 
\end{rem}

\begin{defin}
	An \textbf{algebra} in $\mathcal{C}$ is a triple $A=(X, m, \iota)$, where $X$ is an object in $\mathcal{C}$, $m:X\otimes X \to X$ is a morphism (called multiplication), and $\iota: \id_\mathcal{C} \to X$ is a monomorphism (called unit), fulfilling the associativity and the unit property: 
	\begin{equation}
		\begin{split}
			m (m \otimes \one_X) a_{X,X,X} &= m (\one_X \otimes m) \\
			m (\iota \otimes \one_X) l_X^{-1} &= \one_X \! = m (\one_X \otimes \iota) r_X^{-1} \,.
		\end{split}
	\end{equation}
\end{defin}

\begin{rem}
	We shall consider only associative algebras. Thanks to the * structure on $\mathcal{C}$, every algebra in $\mathcal{C}$ is also a (coassociative) \emph{coalgebra} with comultiplication $m^* : X \to X \otimes X$ and counit $\iota^* : X \to \id_{\mathcal{C}}$.
\end{rem}

\begin{defin}
	A \textbf{$C^*$-Frobenius algebra} in $\mathcal{C}$ is an algebra $A=(X, m, \iota)$ such that $m$ and $m^*$ fulfill the Frobenius relations:
	\begin{equation}
		(m \otimes \one_X) a_{X,X,X} (\one_X \otimes m^*) = m^*m =  (\one_X \otimes m) a_{X,X,X}^{-1} (m^* \otimes \one_X).
	\end{equation}
\end{defin}

\begin{rem}
	Note that either of the two equalities above implies the other one, by taking the $^*$, as the associator is unitary. Obviously, $C^*$-Frobenius algebras are special cases of \emph{Frobenius algebras}, see \cite{FRS02,FS03,Mue03,Yam04a}, where the comultiplication $\Delta:X\to X\otimes X$ is not required to be equal to $m^*$.
\end{rem}

Recall that the tensor unit $\id_\mathcal{C}$ is simple, namely $\Hom(\id_\mathcal{C},\id_\mathcal{C}) \cong \C$, i.e., $\Hom(\id_\mathcal{C},\id_\mathcal{C}) = \C \one_{\id_\mathcal{C}}$.

\begin{defin}
	An algebra $A=(X, m, \iota)$ in $\mathcal{C}$ is called \textbf{normalized} if $\iota^* \iota = \one_{\id_\mathcal{C}}$ (namely the unit $\iota$ is an isometry, hence $\iota \iota^*$ is a self-adjoint projection in $\Hom(X,X)$). Furthermore, $A$ is called \textbf{special} (or sometimes also strongly separable) if $m m^*$ is a scalar multiple of $\one_X$, where the scalar is necessarily a positive real number, in $\Hom(X,X)$.
\end{defin}

\begin{defin}  \label{def:equiv_and_unitary_equiv}
	Two algebras $A=(X, m, \iota)$ and $B=(Y, n, j)$ in $\mathcal{C}$ are called \textbf{equivalent} if there is an isomorphism $S\in\Hom(X,Y)$ which intertwines the two algebra structures: 
	\begin{equation}
		\begin{split}
			S m &= n (S \otimes S) \\
			S \iota &= j.
		\end{split}
	\end{equation}
	The algebras $A$ and $B$ are called \textbf{unitarily equivalent} if $S$ is in addition unitary.
\end{defin}

\begin{rem}   \label{rem:associative_C-algebra_equiv_normalized}
	Note that every algebra $A=(X, m, \iota)$ in $\mathcal{C}$ is equivalent to a normalized one. Indeed, $\iota^*\iota=\alpha \one_{\id_\mathcal{C}}$ (as $\Hom(\id_\mathcal{C},\id_\mathcal{C}) \cong \C$) for some positive real number $\alpha$ and thus $A_S := (X, m_S, \iota_S)$ with $S := \sqrt{\alpha} \one_X$, $m_S:=S^{-1}m(S\otimes S)$ and $\iota_S:=S^{-1}\iota$, is a normalized algebra and $S^{-1}$ is an equivalence from $A$ to $A_S$.
\end{rem}

\begin{lem}  \label{lem:mu_strictly_positive}
	Let $A=(X, m,\iota)$ be an algebra in $\mathcal{C}$. Then, $mm^*$ is a strictly positive element in $\Hom(X,X)$. If, in addition, $A$ is a $C^*$-Frobenius algebra, then it is equivalent to a special $C^*$-Frobenius algebra.
\end{lem}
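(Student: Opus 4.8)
The plan is to treat the two assertions separately: the strict positivity of $mm^*$ is a short $C^*$-algebraic observation, while the special-ization of a $C^*$-Frobenius algebra is the real content.

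For strict positivity, note first that the unit axiom gives $mg=\one_X$ for $g:=(\iota\otimes\one_X)l_X^{-1}$, so $g$ is a right inverse of $m$ and, taking adjoints, $g^*$ is a left inverse of $m^*$, i.e. $g^*m^*=\one_X$. Being of the form $(m^*)^*(m^*)$, the element $mm^*$ is positive in the unital $C^*$-algebra $\Hom(X,X)$, so it remains only to prove invertibility. I would deduce this from the order-preserving compression $T\mapsto mTm^*$ on $\Hom(X\otimes X, X\otimes X)\to\Hom(X,X)$: since $gg^*\le\|g\|^2\one_{X\otimes X}$ and $\one_X=(g^*m^*)^*(g^*m^*)=m(gg^*)m^*$, one obtains $\one_X\le\|g\|^2\,mm^*$, hence $mm^*\ge\|g\|^{-2}\one_X$ is invertible, i.e. strictly positive.

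For the Frobenius case, I introduce left and right multiplications $L_z:=m(z\otimes\one_X)l_X^{-1}$ and $R_z:=m(\one_X\otimes z)r_X^{-1}$ for $z\in\Hom(\id_\mathcal{C},X)$, and set $\kappa:=mm^*\iota\in\Hom(\id_\mathcal{C},X)$. The central step is the identity $mm^*=L_\kappa=R_\kappa$. To get $mm^*=L_\kappa$ I would compose the Frobenius relation $m^*m=(\one_X\otimes m)a_{X,X,X}^{-1}(m^*\otimes\one_X)$ on the right with $\iota\otimes\one_X$; the unit axiom $m(\iota\otimes\one_X)=l_X$ turns the left-hand side into $m^*l_X$, giving $m^*=(\one_X\otimes m)a_{X,X,X}^{-1}(m^*\iota\otimes\one_X)l_X^{-1}$, and then applying $m$ together with associativity $m(\one_X\otimes m)=m(m\otimes\one_X)a_{X,X,X}$ collapses the right-hand side to $m(\kappa\otimes\one_X)l_X^{-1}=L_\kappa$. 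The mirror computation, composing the other Frobenius relation with $\one_X\otimes\iota$ and using $m(\one_X\otimes\iota)=r_X$, yields $mm^*=R_\kappa$. Hence $L_\kappa=R_\kappa$, so $\kappa$ is central, and by the first part $mm^*=L_\kappa$ is positive and invertible.

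It remains to rescale. Since the central morphisms form a closed subspace of $\Hom(\id_\mathcal{C},X)$ on which $z\mapsto L_z$ is bounded below (as $L_z\iota=z$) and multiplicative ($L_zL_{z'}=L_{z\cdot z'}$), its image is a closed subalgebra of $\Hom(X,X)$ containing the self-adjoint $mm^*$; thus the positive square root $(mm^*)^{1/2}\in C^*(mm^*,\one_X)$ is again of the form $L_w=R_w$ for a central positive invertible $w$ with $w\cdot w=\kappa$. I then rescale by the equivalence $S:=L_w$, setting $A'=(X,m',\iota')$ with $m':=Sm(S^{-1}\otimes S^{-1})$ and $\iota':=S\iota$. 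Using $m(L_a\otimes\one_X)=L_am$ (always) and $m(\one_X\otimes L_a)=L_am$ (for central $a$), one simplifies $m'=L_{w^{-1}}m$ and $m'^*=m^*L_{w^{-1}}$, so that $m'm'^*=L_{w^{-1}}L_\kappa L_{w^{-1}}=L_{w^{-2}\cdot\kappa}=L_\iota=\one_X$; hence $A'$ is special. Substituting these same expressions into the Frobenius relation for $m'$ and reducing to that for $m$ via the two identities shows $A'$ is still $C^*$-Frobenius. The main obstacle is the identity $mm^*=L_\kappa=R_\kappa$, i.e. that $mm^*$ is central multiplication; once it is secured, strict positivity guarantees an honest central square root and the rescaling is routine.
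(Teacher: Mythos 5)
Your proof is correct and follows essentially the same route as the paper's: your lower bound $mm^*\geq\|g\|^{-2}\one_X$, obtained by compressing the unit, is exactly the paper's estimate $mm^*\geq\alpha^{-1}\one_X$ (indeed $\|g\|^2=\|\iota\iota^*\otimes\one_X\|=\alpha$), and your identity $mm^*=L_\kappa=R_\kappa$ followed by rescaling with the inverse square root is precisely the centrality-plus-renormalization argument of \cite[Lemma 3.5 and Corollary 3.6]{BKLR15} that the paper invokes, written out here for a non-strict associator. The only difference is packaging: you realize continuous functions of $mm^*$ as central multiplication operators via the closed subalgebra $\{L_z : L_z=R_z\}$ of $\Hom(X,X)$, whereas the cited argument states centrality as $m(mm^*\otimes\one_X)=mm^*m=m(\one_X\otimes mm^*)$ and extends it to the square root by functional calculus — the same mechanism.
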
	

\begin{proof}
	The proof of the first statement is \cite[Eq.\ (3.1.6)]{BKLR15} for strict $C^*$-tensor categories. Yet, it clearly works for the general case too, we recall it below for the reader's convenience. $\iota^*\iota=\alpha \one_{\id_\mathcal{C}}$ for some positive real number $\alpha$ and thus $\alpha^{-1}\iota\iota^*$ is a projection in $\Hom(X,X)$, i.e., $\alpha^{-1}\iota\iota^*\leq \one_X$. It follows that
	\begin{equation}
		\begin{split}
			mm^* 
			&=  
			m(\one_X\otimes \one_X) m^* \\
			&\geq 
			\alpha^{-1} m(\one_X\otimes \iota\iota^*) m^* \\
			&=
			\alpha^{-1} m(\one_X\otimes \iota)r_X^{-1}[ m(\one_X\otimes \iota)r_X^{-1}]^*\\
			&= 
			\alpha^{-1} \one_X	
		\end{split}
	\end{equation}
	where we have used the unitarity of $r_X$ for the second equality and the unit property of $A$ for the last one.
	The second statement follows as in the second part of the proof of \cite[Lemma 3.5]{BKLR15} and in the proof of \cite[Corollary 3.6]{BKLR15}, by the associativity and the Frobenius property and by taking the non-trivial associator into account.
\end{proof}	

The following and somehow converse statement, originally due to \cite{LR97}, holds:

\begin{lem}
	If $A$ is a special algebra in $\mathcal{C}$, then $A$ is $C^*$-Frobenius.
\end{lem}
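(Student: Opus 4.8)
The plan is to reduce the statement to a single Frobenius relation and then to recognise that relation as the assertion that the comultiplication $m^*$ is a module map, which is forced by speciality.

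First I would note that the two equalities defining a $C^*$-Frobenius algebra are adjoint to one another. Applying $*$ to $(m\otimes\one_X)\,a_{X,X,X}\,(\one_X\otimes m^*)=m^*m$ and using that $a_{X,X,X}$ is unitary, that $(S\otimes T)^*=S^*\otimes T^*$, and that $m^{**}=m$, one obtains $(\one_X\otimes m)\,a_{X,X,X}^{-1}\,(m^*\otimes\one_X)=m^*m$. Hence it is enough to establish the single relation
\[
(m\otimes\one_X)\,a_{X,X,X}\,(\one_X\otimes m^*)=m^*m .
\]
Since $A$ is special, $mm^*=\lambda\one_X$ with $\lambda>0$; after rescaling $m$ by $\lambda^{-1/2}$, which leaves both sides of the relation unchanged, I may assume $mm^*=\one_X$, so that $e:=m^*m$ is the self-adjoint projection onto the range of the isometry $m^*$.

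The conceptual content is the following. Associativity says exactly that $m\colon X\otimes X\to X$ is a homomorphism of $A$--$A$-bimodules, where $X\otimes X$ carries the left action $m\otimes\one_X$ and the right action $\one_X\otimes m$ (suitably conjugated by $a_{X,X,X}$ in the non-strict setting) and $X$ carries the regular action $m$. The relation to be proved says precisely that the adjoint $m^*$ is again a homomorphism of left $A$-modules, and its $*$ that it is one of right $A$-modules. The mechanism is that speciality makes these actions coisometric, e.g.\ $(m\otimes\one_X)(m^*\otimes\one_X)=(mm^*)\otimes\one_X=\one_{X\otimes X}$. I would use this to contract an adjacent pair $mm^*=\one_X$ that is created after repositioning the multiplications by associativity and inserting, then removing, a unit leg by the unit property. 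This is the Longo--Roberts manipulation, and running it through the associator $a_{X,X,X}$ produces the identity.

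The main obstacle is that the implication is genuinely not formal: coassociativity and the counit property alone (the adjoints of associativity and of the unit property) do \emph{not} yield the Frobenius compatibility, as the comultiplication of a bialgebra already shows. What must be used is that the comultiplication here is literally the adjoint $m^*$ of $m$, \emph{together} with the scalar condition $mm^*=\lambda\one_X$; only their combination, through the $*$-structure of $\mathcal{C}$, forces $m^*$ to intertwine the module structures. Correspondingly, the delicate bookkeeping is to track the non-strict associators and unitors while applying the single speciality contraction at exactly the right place. Once the left relation is established, the right one follows by taking $*$ as above.
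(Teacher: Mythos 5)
Your preliminary steps are fine: the two Frobenius equalities are indeed adjoints of one another (this is exactly the paper's remark after the definition of a $C^*$-Frobenius algebra), and the normalization $\lambda=1$ is harmless \emph{provided} you rescale the unit as well, i.e.\ pass to $(X,\lambda^{-1/2}m,\lambda^{1/2}\iota)$; rescaling $m$ alone destroys the unit property that your sketch later invokes. The genuine gap is the main step: you never derive $(m\otimes\one_X)a_{X,X,X}(\one_X\otimes m^*)=m^*m$, you only describe a mechanism for it, namely ``repositioning the multiplications by associativity, inserting then removing a unit leg, and contracting an adjacent pair $mm^*=\one_X$''. No such derivation can exist. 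Every move on that list (functoriality of $\otimes$, associativity and unit of $A$ and their adjoints, the contraction $mm^*=\one_X$, and the rules $(S\otimes T)^*=S^*\otimes T^*$, $(ST)^*=T^*S^*$, $S^{**}=S$) is valid in \emph{any} tensor category equipped with a compatible involutive $*$-operation, not only in $C^*$-tensor categories; for instance it is valid for finite-dimensional Krein spaces (non-degenerate but indefinite Hermitian forms, adjoints taken with respect to the forms). There the statement is false: take $X=\C e_1\oplus\C e_2$ with product $e_ie_j=\delta_{ij}e_i$, unit $\iota(1)=e_1+e_2$, and the Hermitian form with Gram matrix $\begin{pmatrix} -1 & 1 \\ 1 & 0 \end{pmatrix}$. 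A direct computation gives $m^*(e_1)=e_1\otimes e_1+e_1\otimes e_2+e_2\otimes e_1$ and $m^*(e_2)=e_2\otimes e_2$, hence $mm^*=\one_X$ (speciality, already normalized), while
\begin{equation}
(m\otimes\one_X)(\one_X\otimes m^*)(e_1\otimes e_1)=e_1\otimes e_1+e_1\otimes e_2
\ \neq\
e_1\otimes e_1+e_1\otimes e_2+e_2\otimes e_1=m^*m(e_1\otimes e_1),
\end{equation}
so the Frobenius relation fails. Consequently, positivity --- the $C^*$-structure --- must enter the proof analytically, not merely through the $*$-operation combined with diagrammatic rewriting.

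This is where your own (correct) warning that the implication ``is genuinely not formal'' turns against the proposal: the combination ``comultiplication equal to $m^*$ plus $mm^*=\lambda\one_X$'', used only diagrammatically, is exactly what the Krein example satisfies. What the formal moves really yield is weaker: setting $F:=(m\otimes\one_X)a_{X,X,X}(\one_X\otimes m^*)$, one gets that $F$ and $F^*$ are idempotents and that $Fm^*=m^*$; in the Krein example $F$ is an idempotent which is \emph{not} self-adjoint and is different from $m^*m$. The proof the paper relies on (the adaptation of \cite[Lemma 3.7]{BKLR15}, going back to \cite{LR97}) closes precisely this gap with operator-algebraic input that has no analogue in a bare $*$-setting: for example, with the normalization one has $\|F\|\le\|m\|\,\|m^*\|=1$, and an idempotent of norm at most one in a $C^*$-algebra (here $\Hom(X\otimes X,X\otimes X)$) is automatically a self-adjoint projection; this kind of positivity/norm argument, combined with the unit property, is what identifies $F$ with $m^*m$. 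Your sketch contains no step of this nature --- no norms, no positivity --- so the central assertion remains unproven, and the purely diagrammatic route you propose for it cannot be completed.
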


\begin{proof}
	The proof can be adapted from the one in the strict case given in \cite[Lemma 3.7]{BKLR15}.
\end{proof}

The following \emph{haploid} condition is motivated by the applications to extensions of chiral conformal field theories (CFTs). It is the abstract description of the irreducibility or simplicity of an extension (uniqueness of the vacuum), both in the conformal net and in the vertex operator algebra formalism. Finite index and conformal extensions of chiral CFTs are necessarily irreducible, hence the haploid case is general for our applications.

\begin{defin}
	An algebra $A=(X, m, \iota)$ in $\mathcal{C}$ is called \textbf{haploid} (or connected, or sometimes also irreducible) if $\Hom(\id_\mathcal{C},X) \cong \C$, i.e., $\Hom(\id_\mathcal{C},X) = \C \iota$.
\end{defin}

In the haploid case, special algebras are the same as $C^*$-Frobenius algebras, without the need of passing to equivalent algebras:

\begin{lem}  \label{lem:frobenius_iff_special}
	A haploid algebra $A$ in $\mathcal{C}$ is special if and only if it is $C^*$-Frobenius.
\end{lem}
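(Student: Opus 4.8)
The plan is to prove only the non-trivial implication, since the converse (special $\Rightarrow$ $C^*$-Frobenius) holds for arbitrary algebras by the previous lemma (originally due to \cite{LR97}). So assume $A=(X,m,\iota)$ is a haploid $C^*$-Frobenius algebra; I want to show that $mm^*\in\Hom(X,X)$ is a scalar multiple of $\one_X$. The strategy is to exhibit $mm^*$ as a \emph{left $A$-module endomorphism} of the regular module $X$, and then to use the haploid hypothesis to force any such endomorphism to be scalar. The positivity of the resulting scalar will be automatic from Lemma \ref{lem:mu_strictly_positive}, which already guarantees that $mm^*$ is strictly positive.

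First I would show that $mm^*$ commutes with left multiplication, in the precise sense that $(mm^*)\,m = m(\one_X\otimes mm^*)$ as morphisms $X\otimes X\to X$. Expanding the right-hand side as $m(\one_X\otimes m)(\one_X\otimes m^*)$, I would rewrite $m(\one_X\otimes m)$ using the associativity relation $m(\one_X\otimes m)=m(m\otimes\one_X)a_{X,X,X}$, obtaining $m(m\otimes\one_X)a_{X,X,X}(\one_X\otimes m^*)$. At this point the Frobenius relation $(m\otimes\one_X)a_{X,X,X}(\one_X\otimes m^*)=m^*m$ applies \emph{verbatim}, collapsing the expression to $m(m^*m)=(mm^*)m$, as desired. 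This is the only step that carries real content, and the main thing to be careful about is getting the non-strict associators to line up so that the stated Frobenius relation can be invoked directly, rather than in a conjugated or reassociated form.

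Finally I would recover $mm^*$ from its value at the unit. Writing $P:=mm^*$ and using the right unit property $\one_X=m(\one_X\otimes\iota)r_X^{-1}$ together with the module identity $Pm=m(\one_X\otimes P)$ just established, I get $P=Pm(\one_X\otimes\iota)r_X^{-1}=m(\one_X\otimes P\iota)r_X^{-1}$ by functoriality of $\otimes$. The haploid hypothesis gives $\Hom(\id_\mathcal{C},X)=\C\iota$, so $P\iota=\lambda\iota$ for some $\lambda\in\C$; substituting and pulling out the scalar yields $P=\lambda\,m(\one_X\otimes\iota)r_X^{-1}=\lambda\one_X$. Since $mm^*$ is strictly positive by Lemma \ref{lem:mu_strictly_positive}, necessarily $\lambda>0$, so $A$ is special. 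In fact the same computation shows that one-sided module endomorphisms of the regular module $X$ are parametrized by $\Hom(\id_\mathcal{C},X)$ via $P\mapsto P\iota$, which is the structural reason the haploid condition is exactly what is needed.
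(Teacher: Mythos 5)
Your proposal is correct and takes essentially the same approach as the paper: for the nontrivial direction the paper simply defers to the proof of \cite[Lemma 3.3]{BKLR15}, and that standard argument is exactly yours --- the Frobenius relation makes $mm^*$ a left module endomorphism of the regular module, haploidity (via evaluation at the unit $\iota$) forces it to be a scalar multiple of $\one_X$, and positivity of the scalar comes from Lemma \ref{lem:mu_strictly_positive}. The only difference is that you spell out the non-strict associators and unitors explicitly, which the paper leaves to the cited strict-setting proof.
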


\begin{proof}
	The \lq\lq if\rq\rq part follows as in the proof of \cite[Lemma 3.3]{BKLR15}.
\end{proof}

If $A$ is haploid, then $\iota$ determines a unique morphism $\varepsilon : X \to \id_\mathcal{C}$ such that $\varepsilon \iota = \one_{\id_\mathcal{C}}$.
In particular, if $A$ is in addition normalized, then $\varepsilon = \iota^*$. Following \cite{KO02}:

\begin{defin}  \label{def:rigid_haploid_alg}
	A haploid algebra $A=(X, m, \iota)$ in $\mathcal{C}$ is called \textbf{rigid} if the morphism $e_A := \varepsilon m : X\otimes X \to \id_\mathcal{C}$ admits a coevaluation $i_A: \id_\mathcal{C} \to X\otimes X$, fulfilling the conjugate equations \eqref{eq:conj_eqns}. In particular, the object $X$ is rigid with conjugate $\overline{X} = X$.
\end{defin}

\begin{rem}  \label{rem:CstarFrob_is_rigid_and_real}
	More generally, if $A=(X, m, \iota)$ is a (not necessarily haploid) $C^*$-Frobenius algebra in $\mathcal{C}$, then $e_A := \iota^* m : X\otimes X \to \id_\mathcal{C}$ and $i_A := m^* \iota: \id_\mathcal{C} \to X\otimes X$, thus $e_A = i_A^*$, provide a solution of the conjugate equations \eqref{eq:conj_eqns} for $X$ and $\overline{X} := X$.
\end{rem}

In the general (not necessarily haploid) case: 

\begin{defin}
	We call \textbf{Q-system} in $\mathcal{C}$ a special $C^*$-Frobenius algebra $A$ in $\mathcal{C}$.
\end{defin}	

\begin{defin}
	A Q-system is \textbf{normalized} or \textbf{haploid}, if it is respectively normalized or haploid as an algebra. A Q-system is \textbf{standard} if $e_A = \iota^* m$ and $i_A = m^* \iota = e_A^*$ are a \emph{standard solution} of the conjugate equations \eqref{eq:conj_eqns} for $X$ and $\overline{X} = X$, see \cite{LR97}, cf.\ \cite{GL19}.
\end{defin}

\begin{rem}  \label{rem:std_Qsys}
	If the Q-system is normalized, then it is standard if and only if the scalar $\lambda$ in $m m^* = \lambda \one_X$ equals the dimension $d(X)$ of $X$ seen as rigid object in $\mathcal{C}$. 
	Namely, $\lambda \one_{\id_{\mathcal{C}}}= d(X) \one_{\id_{\mathcal{C}}}
	= \iota^* m m^* \iota $, if $e_A = \iota^* m = i_A^*$ are standard.	
	If the Q-system is haploid, then it is \emph{automatically} standard, see \cite[Section 6]{LR97}, see also \cite[Remark 5.6(3)]{Mue03}, \cite[Theorem 2.9]{NY18}, \cite[Proposition 2.20]{Gui21c}.
\end{rem}	

In our applications to conformal nets and vertex operator algebras in Section \ref{sec:applications}, but not in our main result in Section \ref{sec:main_theo}, the category $\mathcal{C}$ is in addition unitarily braided.
Let $\mathcal{C}$ be a \textbf{braided $C^*$-tensor category}. Denote by
\begin{equation}
	b_{X,Y}: X\otimes Y \longrightarrow Y\otimes X
\end{equation}
the \emph{braiding} in $\mathcal{C}$, which makes the usual hexagonal diagrams commute and which we will assume to be unitary in the $C^*$ context. For rigid objects $X$ in $\mathcal{C}$, together with the dimension $d(X)$ recalled above, denote by $\omega(X)$ the canonical unitary \emph{twist} of $X$ in $\mathcal{C}$. Namely, 
\begin{equation} \label{eq.:twist}
\omega(X) := l_X 
(e_X \otimes \one_X) a_{\overline{X},X,X} (\one_{\overline{X}} \otimes 
b_{X,X}) a_{\overline{X},X,X}^{-1} (e_X^* \otimes \one_X) l_X^{-1} : X \to X
\end{equation}
if $e_X$ and $i_X$ are standard, see \cite{LR97} and \cite{Mue00}. 

\begin{rem}\label{rem:braiding}
	In the special case of unitary \emph{fusion} categories, which cover the applications to \emph{rational} chiral CFTs, every braiding is automatically unitary thanks to \cite[Theorem 3.2]{Gal14}. Moreover, every unitary braided fusion category admits a unique ribbon structure making it into a unitary pre-modular tensor category, namely the one defined by the canonical twist $\omega$, see 
\cite[Theorem 3.5]{Gal14}.
\end{rem}

\begin{rem}\label{rem:twistinverible} Let $\mathcal{C}$ be a unitary fusion category. An object $X$ in $\mathcal{C}$ is said to be invertible (or a \emph{simple current} in the CFT context) if  $X \otimes \overline{X}$ and $\overline{X} \otimes X$ are isomorphic to 
$\id_\mathcal{C}$. Equivalently, $X$ is invertible if $d(X) = 1$. If $X$ is invertible,  then $X$ and $X \otimes X$ are simple. As a consequence, if $\mathcal{C}$ is a unitary pre-modular tensor category, there are complex numbers $\alpha(X)$ and $\beta(X)$ such that  $\omega(X) = \alpha(X)\one_X$ and $b_{X,X} = \beta(X) \one_{X \otimes X}$. Hence, from Eq. \eqref{eq.:twist} we find 
\begin{equation} 
	\begin{split}
\alpha(X) \one_X &=   \beta(X)l_X 
(e_X \otimes \one_X) a_{\overline{X},X,X} (\one_{\overline{X}} \otimes 
\one_{X \otimes X}) a_{\overline{X},X,X}^{-1} (e_X^* \otimes \one_X) l_X^{-1} \\
&= \beta(X) d(X) \one_X = \beta(X) \one_X
	\end{split}
\end{equation}
so that $\alpha(X) = \beta(X)$ for every invertible object $X$ in $\mathcal{C}$, cf.\ \cite{Reh90}. If the simple current $X$ is not isomorphic to 
$\id_{C}$ but  $X\otimes X$ is (equivalently $X \cong \overline{X}$), then $X$ is called a \emph{$\mathbb{Z}_2$-simple current}. 
\emph{$\mathbb{Z}_n$-simple currents} are defined analogously.      
\end{rem}

\begin{defin}
	An \textbf{algebra} $A=(X, m,\iota)$ in $\mathcal{C}$ is called \textbf{commutative}, with respect to the given braiding, if the following commutativity condition holds:
	\begin{equation} \label{eq:comm_alg}
		m b_{X,X} = m
	\end{equation}
	or equivalently $m b_{X,X}^\textrm{op} = m$, where $b_{X,X}^\textrm{op} := b_{X,X}^{-1}$ is the opposite self-braiding of $X$.
\end{defin}

We state and sketch the proof of the following fact, which is presumably known to experts, but which we could not find in the literature:

\begin{prop}  \label{prop:trivial_twist}
	Let $A=(X, m,\iota)$ be a standard Q-system in a braided $C^*$-tensor category $\mathcal{C}$.
	If $A$ is commutative, then it has trivial twist, namely $\omega(X) = \one_X$. 
\end{prop}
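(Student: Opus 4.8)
The plan is to feed the standard solution furnished by the Q-system structure into the definition \eqref{eq.:twist} of the twist and then collapse the resulting ``curl'' using commutativity together with the Frobenius and unit relations. Since $A$ is a \emph{standard} Q-system, the pair $e_A=\iota^* m$ and $i_A=m^*\iota=e_A^*$ is a standard solution of the conjugate equations \eqref{eq:conj_eqns} for $\overline{X}=X$, so it is precisely this pair that must be used as $e_X$ and $e_X^*$ in \eqref{eq.:twist}. Substituting $e_X=\iota^* m$ and $e_X^*=m^*\iota$, I would first peel off the unit morphisms and write $\omega(X)$ as the conjugate by $l_X$ of $(\iota^*\otimes\one_X)\,f\,(\iota\otimes\one_X)$, where $f:=(m\otimes\one_X)(\one_X\otimes b_{X,X})(m^*\otimes\one_X)$ is the core of the curl. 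Throughout the sketch I would suppress the associators (working as if $\mathcal{C}$ were strict) and reinstate them only at the end.

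The genuine content is the identity $f=b_{X,X}\,m^* m$. First I would slide the multiplication past the braiding by naturality of $b$ together with a hexagon identity, obtaining $(m\otimes\one_X)(\one_X\otimes b_{X,X})=b_{X,X}(\one_X\otimes m)(b_{X,X}^{-1}\otimes\one_X)$. Inserting this into $f$ produces the factor $(b_{X,X}^{-1}m^*)\otimes\one_X$; now cocommutativity enters: taking adjoints in the commutativity relation $m b_{X,X}=m$ of \eqref{eq:comm_alg} and using unitarity $b_{X,X}^*=b_{X,X}^{-1}$ gives $b_{X,X}^{-1}m^*=m^*$, so that factor becomes $m^*\otimes\one_X$. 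What is left is $(\one_X\otimes m)(m^*\otimes\one_X)$, which equals $m^* m$ by the Frobenius relation, yielding $f=b_{X,X}\,m^* m$.

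It then remains to cap the diagram off with the unit and counit. The unit property $m(\iota\otimes\one_X)=\one_X$ gives $f(\iota\otimes\one_X)=b_{X,X}\,m^*$, hence $\omega(X)=(\iota^*\otimes\one_X)\,b_{X,X}\,m^*$. A last application of naturality of the braiding in the first variable moves $\iota^*$ to the other leg, $(\iota^*\otimes\one_X)\,b_{X,X}=b_{X,\id_\mathcal{C}}(\one_X\otimes\iota^*)$, and since braiding against the tensor unit is the canonical unitor identification this is just $\one_X\otimes\iota^*$. Combining with the adjoint $(\one_X\otimes\iota^*)\,m^*=\bigl(m(\one_X\otimes\iota)\bigr)^*$ of the other unit property $m(\one_X\otimes\iota)=\one_X$, I conclude $\omega(X)=\one_X$.

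Everything outside the identity $f=b_{X,X}\,m^* m$ is routine unit/counit bookkeeping, so the substantive step is that single simplification, resting on the interplay of braiding naturality, cocommutativity, and the Frobenius relation. I expect the main obstacle to be purely notational: carrying the nontrivial associators $a_{X,X,X}$, $a_{\overline{X},X,X}$ and the unitors $l_X$ through each of these moves in the non-strict setting. Each step above is an instance of a coherent axiom (a hexagon, a naturality square, the triangle relation), so by Mac Lane's coherence theorem every strict identity I use promotes to its non-strict counterpart with a uniquely determined pattern of coherence isomorphisms; I would therefore carry out the computation strictly as above and then verify that, once reinstated, the associators and unitors cancel in exactly that pattern.
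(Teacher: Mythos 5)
Your proof is correct, but it follows a genuinely different route from the paper's. You unwind the definition \eqref{eq.:twist} directly: substituting the standard solution $e_A=\iota^*m$, $e_A^*=m^*\iota$ supplied by standardness, you establish the key identity $f=b_{X,X}\,m^*m$ via a naturality-plus-hexagon slide, the adjoint form $b_{X,X}^{-1}m^*=m^*$ of commutativity \eqref{eq:comm_alg}, and the Frobenius relation, and then collapse the result with the unit properties and the triviality of $b_{X,\id_\mathcal{C}}$ (all steps check out, and your deferral of associators and unitors to strictification/coherence is consistent with how the paper itself handles the non-strict setting elsewhere). The paper instead never opens up the definition of $\omega(X)$: it cites \cite[Lemma 4.3]{LR97} for the identity $e_A(\omega(X)\otimes\one_X)=e_A\,b_{X,X}^{-1}$, valid for any standard solution by unitarity of the canonical twist, observes that commutativity turns the right-hand side into $e_A$, and then extracts $\omega(X)=\one_X$ from the conjugate equations \eqref{eq:conj_eqns}. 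The trade-off is clear: your argument is self-contained and exhibits explicitly how commutativity absorbs the crossing in the curl, at the cost of the diagrammatic computation and the coherence bookkeeping; the paper's argument is three lines long but rests on an external structural fact about standard solutions from the Longo--Roberts dimension theory. Both hinge on the same input -- commutativity applied to the standard evaluation $\iota^*m$ -- so they are close in spirit, but the decomposition of the work is genuinely different.
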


\begin{proof}
	The unitarity of the canonical twist implies that $e_A (\omega(X) \otimes \one_X) = e_A b_{X,X}^{-1}$, cf.\ \cite[Lemma 4.3]{LR97}. By choosing $e_A = \iota^* m = i_A^*$ as a standard solution of the conjugate equations \eqref{eq:conj_eqns} and by the commutativity condition \eqref{eq:comm_alg}, it follows that $e_A (\omega(X) \otimes \one_X) = e_A$. Thus $\omega(X) = \one_X$ by using the conjugate equations.
\end{proof}

\begin{rem}
	If $A$ is haploid, the conclusion of the previous proposition follows by realizing the rigid braided $C^*$-tensor category generated by $X$ as endomorphisms of a type $I\!I\!I$ factor, see \cite{Yam03,BHP12,GY19}, and by considering the irreducible finite index subfactor associated to $X$, see \cite{Lon94,Jon83}.
	Then $\omega(X) = \one_X$ follows by \cite[Proposition 4]{Reh94}, or by \cite[Corollary 2.20]{BDG21}.
\end{rem}

Recall Definition \ref{def:equiv_and_unitary_equiv}. We shall need the following observations:

\begin{rem}  \label{rem:equiv_and_unitary_equiv}
	The properties of being haploid or rigid or Frobenius, or commutative in the case of braided categories, are invariant under equivalence of algebras. 
	In contrast, the properties of being normalized or special or $C^*$-Frobenius, hence Q-system, or standard Q-system, are invariant under \emph{unitary} equivalence of algebras. 
\end{rem}

\begin{prop}   \label{prop:1-1_equiv_algebras_under_categ_equiv}
	Let $\mathcal{C}$ and $\mathcal{D}$ be two (braided) $C^*$-tensor categories with simple tensor unit. Let $F:\mathcal{C}\to\mathcal{D}$ be a (braided) unitary, i.e., *-preserving, tensor equivalence, with unitary tensorator $f_2: F(X) \otimes F(Y) \to F(X\otimes Y)$, for $X,Y$ objects in $\mathcal{C}$, 
	and unitor $f_0: \id_{\mathcal{D}} \to F(\id_{\mathcal{C}})$, and with inverse $G:\mathcal{D}\to\mathcal{C}$ and unitary tensor natural transformations $G \circ F \Rightarrow I_{\mathcal{C}}$ and $F \circ G \Rightarrow I_{\mathcal{D}}$.
	Then, there is a one-to-one correspondence between equivalence classes of (commutative) algebras in $\mathcal{C}$ and in $\mathcal{D}$ given by 
	$$A=(X,m,\iota) \mapsto F(A):=(F(X), F(m)f_2, F(\iota)f_0).$$
	Moreover, the map $A\mapsto F(A)$ also gives a one-to-one correspondence between equivalence classes of (commutative) Q-systems in 
	$\mathcal{C}$ and in $\mathcal{D}$. 	
\end{prop}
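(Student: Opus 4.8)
The plan is to read this as the standard fact that a unitary (braided) monoidal equivalence transports (commutative) algebra objects and (commutative) Q-systems, and to produce the inverse correspondence explicitly through $G$. Concretely, I would proceed in four steps: check that $F(A)$ is genuinely an algebra in $\mathcal{D}$; check that the assignment respects (unitary) equivalence; use $G$ and the given natural transformations to invert it on equivalence classes; and finally verify that commutativity and the Q-system axioms are preserved in both directions.

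First, I would verify that $F(A)=(F(X),F(m)f_2,F(\iota)f_0)$ is an algebra. Writing $\tilde m := F(m)f_2$ and $\tilde\iota := F(\iota)f_0$, associativity of $\tilde m$ follows from associativity of $m$ together with the monoidal coherence axiom relating $f_2$ to the associators $a^{\mathcal{C}},a^{\mathcal{D}}$ and with the naturality of $f_2$; the two unit identities for $\tilde\iota$ follow from the unit property of $A$ and the coherence of $f_2$ with $f_0$ and the unitors $l,r$. Since $F$ is an equivalence, hence faithful, and $f_0$ is invertible, $\tilde\iota$ is a monomorphism. This is the only genuinely computational step and, conceptually routine as it is, it is also the main obstacle: one must carry the non-strict structural morphisms (associators, unitors, tensorator) through the diagrams without the convenience of strictness. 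Compatibility with equivalence is then immediate: if $S$ intertwines $A$ and $B$, functoriality of $F$ and naturality of $f_2,f_0$ show that $F(S)$ intertwines $F(A)$ and $F(B)$; and if $S$ is unitary then $F(S)^*F(S)=F(S^*S)=\one$ and likewise on the other side, so $F(S)$ is unitary as well.

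Next I would establish bijectivity on equivalence classes. Applying the same construction to $G$ sends algebras in $\mathcal{D}$ to algebras in $\mathcal{C}$. The unitary tensor natural transformation $\eta\colon G\circ F\Rightarrow I_{\mathcal{C}}$ has components $\eta_X\colon G(F(X))\to X$ which, being a monoidal natural isomorphism, intertwine the algebra structures of $G(F(A))$ and $A$; hence each $\eta_X$ is a (unitary) algebra equivalence, so $G(F(A))$ is equivalent to $A$, and symmetrically $F(G(B))$ is equivalent to $B$ via $F\circ G\Rightarrow I_{\mathcal{D}}$. Thus $A\mapsto F(A)$ and $B\mapsto G(B)$ are mutually inverse on (unitary) equivalence classes, giving the bijection; surjectivity and injectivity follow formally from these two natural isomorphisms.

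Finally I would treat the extra structure. For commutativity the braided monoidal functor identity $F(b_{X,X})f_2=f_2\,b_{F(X),F(X)}$ gives $\tilde m\,b_{F(X),F(X)}=F(m)F(b_{X,X})f_2=F(mb_{X,X})f_2=F(m)f_2=\tilde m$, so $F(A)$ is commutative exactly when $A$ is. For the Q-system correspondence, since $F$ is $*$-preserving and $f_2,f_0$ are unitary, one has $\tilde m^*=f_2^*F(m^*)$ and $\tilde\iota^*=f_0^*F(\iota^*)$, whence $\tilde\iota^*\tilde\iota=f_0^*F(\iota^*\iota)f_0$ and $\tilde m\,\tilde m^*=F(mm^*)$; therefore the normalized condition $\iota^*\iota=\one_{\id_{\mathcal{C}}}$ and the special condition $mm^*=\lambda\one_X$ (with the same positive scalar $\lambda$) transfer verbatim. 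Being special and an algebra, $F(A)$ is then $C^*$-Frobenius by the earlier lemma, hence a Q-system; conversely $G$ sends Q-systems to Q-systems, and because the components $\eta_X$ and the components of $F\circ G\Rightarrow I_{\mathcal{D}}$ are unitary, the correspondence from the previous step restricts to a bijection of (unitary) equivalence classes of Q-systems. Combining this with the commutativity argument yields the claimed correspondence in the commutative case as well.
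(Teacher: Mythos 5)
Your proof is correct. The paper actually states this proposition without any proof, treating it as the routine transport-of-structure fact for unitary (braided) monoidal equivalences, and your argument is precisely the standard one being implicitly invoked: coherence and naturality of $f_2,f_0$ give that $F(A)$ is an algebra and that $F$ respects (unitary) equivalences, the unitary tensor natural isomorphisms $G\circ F\Rightarrow I_{\mathcal{C}}$ and $F\circ G\Rightarrow I_{\mathcal{D}}$ make $F$ and $G$ mutually inverse on equivalence classes, and unitarity of the tensorator together with $*$-preservation transports specialness and normalization (whence the $C^*$-Frobenius property, by the paper's lemma that special algebras are $C^*$-Frobenius) and the braided functor axiom transports commutativity.
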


For later use, we recall the definition of module over an algebra in a ($C^*$-)tensor category, the definition of unitary module, and of local module.  

\begin{defin}
		Let $\mathcal{C}$ be a tensor category with simple tensor unit, and $A=(X, m, \iota)$ an algebra in $\mathcal{C}$. A \textbf{left $A$-module} in $\mathcal{C}$ (the definition of right $A$-module in $\mathcal{C}$ is analogous) is a pair $M=(Y, m_Y)$, where $Y$ is an object in $\mathcal{C}$ and $m_Y:X\otimes Y \to Y$ is a morphism (called left action), fulfilling the representation and unit properties: 
		\begin{equation}
			\begin{split}
				m_Y (\one_X \otimes m_Y) a_{X,X,Y}^{-1} &= m_Y (m \otimes \one_Y) \\
				m_Y (\iota \otimes \one_Y) l_Y^{-1} &= \one_Y \,.
			\end{split}
		\end{equation}
		We denote by $\Mod_\mathcal{C}(A)$ the category of left $A$-modules in $\mathcal{C}$.
\end{defin}

Now, let $\mathcal{C}$ be a $C^*$-tensor category with simple tensor unit and $A$ be a Q-system (i.e., a special $C^*$-Frobenius algebra) in $\mathcal{C}$.

\begin{defin}
		A left $A$-module is called \textbf{unitary} (or special, or also standard) if 
		\begin{equation}
			(m \otimes \one_Y) a_{X,X,Y} (\one_X \otimes m_Y^*)
			= m_Y^*m_Y 
			=  (\one_X \otimes m_Y) a_{X,X,Y}^{-1} (m^* \otimes \one_Y) 
		\end{equation}
		or equivalently, see \cite[Lemma 3.23]{BKLR15}, \cite[Remark 2.7(iii)]{NY18}, if $m_Y m_Y^*$ is a scalar multiple of $\one_Y$ in $\Hom(Y,Y)$. In this case, it is the same scalar as $mm^*$ in $\Hom(X,X)$.
		We denote by $\Mod^{\opu}_\mathcal{C}(A)$ the full subcategory of unitary left $A$-modules in $\mathcal{C}$.
\end{defin}

\begin{defin}  \label{def:Modu0(A)}
		Let $\mathcal{C}$ be in addition braided, and $A=(X, m, \iota)$ in addition commutative with respect to the given braiding $b$. 
		A left $A$-module $M=(Y, m_Y)$ in $\mathcal{C}$ is called \textbf{local} (or dyslectic, or also single-valued) if $m_Y b_{Y,X} b_{X,Y} = m_Y$.
		We denote respectively by $\Mod^0_\mathcal{C}(A)$ and $\Mod^{\opu,0}_\mathcal{C}(A)$, the full subcategory of local and unitary local left $A$-modules in $\mathcal{C}$.
	\end{defin}

\section{From algebra objects to Q-systems}	\label{sec:main_theo}

In this section, we show that a haploid algebra in a $C^*$-tensor category (with simple tensor unit) is \emph{equivalent} to a Q-system if and only if it is rigid. Furthermore, we show that the haploid condition (which is general for the sake of studying finite index extensions of conformal nets and vertex operator algebras) cannot be removed from the statement.

Recall the following Perron--Frobenius type result. As customary, we call 
an element $T$ in a unital $C^*$-algebra $\mathfrak{A}$ \emph{positive}, denoted by $T\geq 0$, if it is of the form $T=S^*S$ for some $S \in \mathfrak{A}$.
We call $T$ \emph{strictly positive}, denoted by $T>0$, if $T$ is positive and invertible in $\mathfrak{A}$. 
Differently, a real number $\lambda>0$ is called positive, whereas $\lambda\geq0$ is called non-negative.
Recall also that a linear map $\Phi:\mathfrak{A}\to\mathfrak{A}$ is \emph{positive} if $\Phi(T) \geq 0$ for every $T\geq 0$. It is called \emph{strictly positive} if $\Phi(T) > 0$ for every $T\geq 0$, $T\neq 0$. For a finite dimensional $C^*$-algebra realized on a Hilbert space of dimension $n$, see \cite[Lemma 2.1]{EH78}, we call a positive map $\Phi$ \emph{irreducible} if $(\id_{\mathfrak{A}}+\Phi)^{n-1}$ is strictly positive, where $\id_{\mathfrak{A}}$ is the identity map on $\mathfrak{A}$.

\begin{prop}   \label{prop:perron_frobenius}
	Let $\Phi:\mathfrak{A}\to\mathfrak{A}$ be an irreducible positive map on a finite dimensional $C^*$-algebra $\mathfrak{A}$. Then, there exist a strictly positive element $T$ of $\mathfrak{A}$ and a positive number $\lambda$ such that $\Phi(T) = \lambda T$. Furthermore, if $\Phi(S) = \alpha S$ for some non-zero positive element $S$ in $\mathfrak{A}$ and some complex number $\alpha$, then $\alpha = \lambda$ and $S$ is a scalar multiple of $T$.
\end{prop}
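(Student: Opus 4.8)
The plan is to run the classical Perron--Frobenius / Krein--Rutman argument inside the finite-dimensional real space $\mathfrak{A}_{\mathrm{sa}}$ of self-adjoint elements of $\mathfrak{A}$, in which the positive elements form a closed, pointed, generating convex cone $\mathfrak{A}_+$ that is mapped into itself by $\Phi$. By the finite-dimensional Krein--Rutman theorem for cone-preserving linear operators, the spectral radius $\lambda := r(\Phi) \geq 0$ is an eigenvalue of $\Phi$ admitting an eigenvector in the cone, i.e.\ $T \in \mathfrak{A}_+$, $T \neq 0$, $\Phi(T) = \lambda T$. (Equivalently, fixing the canonical faithful trace $\Tr$ and the compact convex base $K := \{X \in \mathfrak{A}_+ : \Tr(X) = 1\}$, one gets $T$ as a Brouwer fixed point of the projectivised map $X \mapsto \Phi(X)/\Tr(\Phi(X))$, which is well defined on $K$ by irreducibility.) Irreducibility then forces both $T > 0$ and $\lambda > 0$: from $\Phi(T) = \lambda T$ the binomial expansion gives $(\id_{\mathfrak{A}} + \Phi)^{n-1}(T) = (1+\lambda)^{n-1} T$, whose left-hand side is strictly positive (hence invertible) by irreducibility, so $T$ is invertible; and if $\lambda = 0$ then $\Phi(T) = 0$ with $T$ invertible, so $T$ dominates every positive element and $\Phi = 0$, whence $(\id_{\mathfrak{A}}+\Phi)^{n-1} = \id_{\mathfrak{A}}$ fails to be strictly positive as soon as $\dim\mathfrak{A} > 1$ (the one-dimensional case being trivial), a contradiction. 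This settles the existence part.

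For the moreover part, let $\Phi(S) = \alpha S$ with $S \geq 0$, $S \neq 0$, $\alpha \in \mathbb{C}$. Since $\alpha S = \Phi(S) \geq 0$ and $S$ is a non-zero positive, hence self-adjoint, element, self-adjointness of $\alpha S$ forces $\alpha \in \mathbb{R}$ and then positivity forces $\alpha \geq 0$; exactly as for $\lambda$, irreducibility upgrades this to $\alpha > 0$. The same binomial/irreducibility argument applied to $S$ yields $(\id_{\mathfrak{A}}+\Phi)^{n-1}(S) = (1+\alpha)^{n-1} S > 0$, so $S > 0$ is invertible as well.

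Finally I would compare $S$ and $T$ by a domination argument. As both are strictly positive, the set $\{t \geq 0 : S - tT \geq 0\}$ is non-empty, closed and bounded above, so $t_* := \max\{t \geq 0 : S - tT \geq 0\}$ is attained with $t_* > 0$, and $R := S - t_* T \geq 0$ lies on the boundary of $\mathfrak{A}_+$ (it is not invertible, else $t_*$ could be increased). Applying the positive map $\Phi$ to the inequality $S \geq t_* T$ gives $\alpha S \geq t_* \lambda T$, i.e.\ $S \geq (t_*\lambda/\alpha) T$, so maximality of $t_*$ forces $\lambda \leq \alpha$; the symmetric comparison with the roles of $S$ and $T$ interchanged gives $\alpha \leq \lambda$, whence $\alpha = \lambda$. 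Then $\Phi(R) = \alpha S - t_* \lambda T = \lambda R$, so were $R \neq 0$ the binomial/irreducibility trick would make $R$ invertible, contradicting that it is on the boundary; therefore $R = 0$, that is $S = t_* T$ is a scalar multiple of $T$.

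I expect the main obstacle to be the interplay with irreducibility: securing a positive eigenvector for the spectral radius and then using the criterion that $(\id_{\mathfrak{A}}+\Phi)^{n-1}$ be strictly positive to promote it to a strictly positive element and to force $\lambda > 0$. Once all positive eigenvectors are known to be invertible, the domination argument for simplicity is comparatively routine, its only delicate points being that $t_*$ is strictly positive and attained (using invertibility of both $S$ and $T$) and that a non-zero boundary eigenvector is excluded by irreducibility. As an alternative route to $\alpha = \lambda$, one could instead note that the trace-adjoint $\Phi^{\dagger}$, defined by $\Tr(\Phi^{\dagger}(X)Y) = \Tr(X\Phi(Y))$, is again positive and irreducible, pick a strictly positive $T'$ with $\Phi^{\dagger}(T') = \lambda T'$, and read off $\alpha\,\Tr(T'S) = \Tr(T'\Phi(S)) = \lambda\,\Tr(T'S)$ with $\Tr(T'S) > 0$.
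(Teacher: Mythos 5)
Your proof is correct, and it is genuinely different from what the paper does, for the simple reason that the paper does not prove this proposition at all: its entire proof is the citation \cite[Theorems 2.3 and 2.4]{EH78}. Your argument reconstructs from scratch the classical cone-theoretic Perron--Frobenius proof that those theorems of Evans--H{\o}egh-Krohn encapsulate: finite-dimensional Krein--Rutman (or Brouwer applied to $X \mapsto \Phi(X)/\Tr(\Phi(X))$ on the trace-normalised base of $\mathfrak{A}_+$ inside $\mathfrak{A}_{\mathrm{sa}}$) produces a positive eigenvector $T$ with eigenvalue $\lambda \geq 0$; the identity $(\id_{\mathfrak{A}}+\Phi)^{n-1}(T)=(1+\lambda)^{n-1}T$ combined with the paper's definition of irreducibility makes $T$ invertible and excludes $\lambda=0$; and the domination argument via $t_*=\max\{t\geq 0 : S-tT\geq 0\}$, with the boundary element $S-t_*T$ shown to be a positive eigenvector and hence zero, gives $\alpha=\lambda$ and proportionality. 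All the delicate points check out: well-definedness of the projectivised map, strict positivity and attainment of $t_*$, and the exclusion of a nonzero non-invertible eigenvector each follow from the same dichotomy you spell out for $\lambda=0$ (state that dichotomy before invoking Brouwer, since well-definedness of the map on $K$ relies on it). What the citation buys the authors is brevity; what your proof buys is self-containedness, and it meshes well with the paper, since your binomial-plus-irreducibility trick is exactly the mechanism the authors themselves exploit for their convolution map $\Phi(T)=T\ast\one_X$ in the proof of Theorem \ref{theo:haploid_rigid_iff_Q-system}. One pedantic caveat: in the degenerate case $\mathfrak{A}\cong\C$ with $\Phi=0$, which is irreducible under the stated definition, the proposition as written fails (one is forced to take $\lambda=0$), so your dismissal of the one-dimensional case as ``trivial'' silently assumes $\Phi\neq 0$; but this implicit exclusion is inherited from the statement itself and from \cite{EH78} as cited, so it is not a gap in your argument relative to the paper.
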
	

\begin{proof}
	See \cite[Theorem 2.3 and Theorem 2.4]{EH78}.
\end{proof}

\begin{theo}   \label{theo:haploid_rigid_iff_Q-system}
	Let $\mathcal{C}$ be a $C^*$-tensor category (with simple tensor unit) and $A=(X, m,\iota)$ be a haploid algebra in $\mathcal{C}$. Then, $A$ is rigid if and only if $A$ is equivalent to a Q-system in $\mathcal{C}$. More specifically, $A$ is rigid if and only if $A$ is equivalent to a Q-system $A_S:=(X,S^{-1}m(S\otimes S), S^{-1}\iota)$ in $\mathcal{C}$ for a strictly positive isomorphism $S\in\Hom(X,X)$. Moreover, $A_S$ can always be taken to be normalized, in which case $A$ is normalized if and only if $S^{-1} \iota = \iota$. In either case, the equivalent Q-system $A_S$ is also standard. 
\end{theo}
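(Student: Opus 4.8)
The plan is to prove the two implications separately, the forward one being immediate and the converse carrying all the weight.

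\textbf{From a Q-system to rigidity.} Suppose $A$ is equivalent to a Q-system. A Q-system is in particular a $C^*$-Frobenius algebra, so Remark~\ref{rem:CstarFrob_is_rigid_and_real} supplies $e_A = \iota^* m$ and $i_A = m^* \iota = e_A^*$ as a solution of the conjugate equations, making the Q-system rigid in the sense of Definition~\ref{def:rigid_haploid_alg}. Since rigidity is invariant under equivalence of algebras by Remark~\ref{rem:equiv_and_unitary_equiv}, $A$ itself is rigid. This direction uses nothing beyond the definitions.

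\textbf{From rigidity to a Q-system: the setup.} For the converse I would first invoke Remark~\ref{rem:associative_C-algebra_equiv_normalized} to assume $A$ normalized, so that $\varepsilon = \iota^*$ and $e_A = \iota^* m$; this changes neither rigidity nor the conclusion. Because $A$ is rigid, $X$ is a rigid object, whence $\End(X) = \Hom(X,X)$ is a finite-dimensional $C^*$-algebra, exactly the setting of Proposition~\ref{prop:perron_frobenius}. The strategy is to absorb the failure of specialness into a rescaling: I seek a strictly positive isomorphism $S \in \End(X)$ such that $A_S = (X,\, S^{-1}m(S\otimes S),\, S^{-1}\iota)$ satisfies $m_S m_S^* \in \C\one_X$, i.e.\ is special. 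Indeed, once $A_S$ is special, Lemma~\ref{lem:frobenius_iff_special} upgrades it to a $C^*$-Frobenius algebra, hence to a Q-system, and $S^{-1}$ is the required equivalence $A \to A_S$.

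\textbf{From rigidity to a Q-system: Perron--Frobenius.} To produce $S$, I would study the linear map $\Phi \colon \End(X) \to \End(X)$, $\Phi(T) = m(\one_X \otimes T)m^*$. Writing $T = R^*R$ gives $\Phi(T) = BB^*$ with $B = m(\one_X \otimes R^*)$, so $\Phi$ is positive, and $\Phi(\one_X) = mm^*$ is strictly positive by Lemma~\ref{lem:mu_strictly_positive}. The decisive structural input is that haploidness, $\Hom(\id_\mathcal{C}, X) = \C\iota$, together with simplicity of $\id_\mathcal{C}$, forces $\Phi$ to be irreducible in the sense of Proposition~\ref{prop:perron_frobenius}: a nontrivial $\Phi$-invariant face of $\End(X)$ would produce a proper nonzero subobject visible through the unit, contradicting one-dimensionality of $\Hom(\id_\mathcal{C},X)$. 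Proposition~\ref{prop:perron_frobenius} then yields a strictly positive $P \in \End(X)$ and $\lambda > 0$ with $\Phi(P) = \lambda P$, unique up to scale. Taking $S = P^{1/2}$ and unwinding the rescaling, I expect the eigenvalue equation to translate into the specialness of $A_S$, giving the desired Q-system.

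\textbf{Normalization, standardness, and the main obstacle.} The remaining assertions are bookkeeping: rescaling $S$ by a positive scalar normalizes $A_S$ without leaving the family $A_S$, and $A$ is normalized precisely when the equivalence fixes the unit, i.e.\ $S^{-1}\iota = \iota$; moreover $A_S$ is haploid by Remark~\ref{rem:equiv_and_unitary_equiv} and hence automatically standard by Remark~\ref{rem:std_Qsys}. I expect the crux to be twofold. First, and most delicate, is reconciling the \emph{one-sided} map $\Phi$ with the \emph{symmetric} specialness condition $m(S^2 \otimes S^2)m^* = \lambda S^2$ hidden in $m_S m_S^* \in \C\one_X$; matching these likely requires rewriting $\Phi$ symmetrically by means of the rigid maps $e_A, i_A$, or invoking the uniqueness clause of Proposition~\ref{prop:perron_frobenius} to show the eigenvector of $\Phi$ also solves the two-sided equation. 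Second is proving irreducibility of $\Phi$ cleanly from haploidness; strict positivity and invertibility of $S$, needed to make $S^{-1}$ an honest equivalence, are then guaranteed by the Perron--Frobenius conclusion.
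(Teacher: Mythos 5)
Your overall strategy coincides with the paper's: the easy direction is handled identically, and the converse is driven by Proposition~\ref{prop:perron_frobenius} applied to a convolution-type map on the finite dimensional $C^*$-algebra $\Hom(X,X)$, followed by the rescaling $S=P^{1/2}$. The genuine gap is your irreducibility step. You assert that haploidness forces $\Phi$ to be irreducible (``a nontrivial $\Phi$-invariant face \dots contradicting one-dimensionality of $\Hom(\id_\mathcal{C},X)$''), and in your closing paragraph you again propose to prove irreducibility ``cleanly from haploidness''. This cannot work: if it did, your argument (which beyond haploidness needs only finite dimensionality of $\Hom(X,X)$, automatic in Example~\ref{ex:CE2}) would exhibit every such haploid algebra as equivalent to a Q-system, hence rigid --- contradicting Example~\ref{ex:CE2}, which is haploid but not rigid. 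That example also shows the failure concretely: there $\Hom(X,X)=\C E_I\oplus\C E_N$ (morphisms are diagonal in the basis $\{I,N\}$), one computes $m^*I=\tfrac12\, I\otimes I$ and $m^*N=\tfrac12(I\otimes N+N\otimes I)$, hence $\Phi(E_N)=\tfrac12 E_N$; so $\C E_N$ is a $\Phi$-invariant ideal and $(\id+\Phi)^{k}(E_N)$ is a positive multiple of $E_N$ for every $k$, never strictly positive. Irreducibility is exactly where rigidity must enter: in the paper, the conjugate equations for $e_A=\varepsilon m$ are invoked to show that the compression of $\Phi(T)$ to the unit corner $\C\,\iota\iota^*$ is nonzero for every nonzero positive $T$ (if $\varepsilon m(\sqrt{T}\otimes\one_X)=0$, composing with the coevaluation $i_A$ and using the duality equations forces $T=0$); combined with haploidness, which makes that corner one-dimensional and gives $\Phi(\iota\iota^*)$ proportional to $\one_X$, this yields strict positivity of $\Phi^2$ and then irreducibility.

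The second obstacle you flag --- reconciling the one-sided eigenvalue equation $\Phi(P)=\lambda P$ with the two-sided specialness equation hidden in $m_S m_S^*\in\C\one_X$ --- is real, but of the two remedies you list, the second is precisely the paper's resolution, and it is short enough that you should have executed it: writing $T\ast R:=m(T\otimes R)m^*$, associativity of $A$ makes $\ast$ associative, so $\Phi(P)=\lambda P$ implies $\Phi(P\ast P)=\lambda\,(P\ast P)$; moreover $P\ast P\geq 0$ and $P\ast P\neq 0$ (otherwise $m(\sqrt{P}\otimes\sqrt{P})=0$, hence $m=0$ by invertibility of $\sqrt{P}$, contradicting the unit property); the uniqueness clause of Proposition~\ref{prop:perron_frobenius} then gives $P\ast P=\gamma P$, which is exactly specialness of $A_S$ for $S=\sqrt{P}$, and Lemma~\ref{lem:frobenius_iff_special} upgrades $A_S$ to a Q-system. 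Your bookkeeping for normalization and standardness (via Remark~\ref{rem:std_Qsys}) is fine. So the proposal has the right architecture, but both load-bearing steps are left unproven, and the mechanism you offer for the first one --- irreducibility from haploidness alone --- is not merely incomplete but false.
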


To render the proof of Theorem \ref{theo:haploid_rigid_iff_Q-system} more transparent, we recall and use a second multiplication operation on the finite dimensional $C^*$-algebra $\Hom(X,X)$, given by the algebra (and coalgebra) structure on $X$. Let $T,S \in \Hom(X,X)$. The \textbf{convolution} of $T$ and $S$ (sometimes also called coproduct, or horizontal 2-box multiplication) is defined by
\begin{equation}  \label{eq:convolution}
	T \ast S := m (T \otimes S) m^* \in \Hom(X,X).
\end{equation}
The terminology comes from subfactor/planar algebra theory \cite{Jon99,BJ00}, and from the subfactor theoretical Fourier transform \cite{Ocn88}, see also \cite{NW95,Bis97,BKLR15,JLW16,JJLRW20,BDG22a}.

The convolution is clearly \textit{bilinear}. By the associativity (and coassociativity) of $A$ and by the naturality of the associator, the convolution is \textit{associative}, i.e., $(T \ast S) \ast R = T \ast (S \ast R)$ for all $T, S, R\in \Hom(X,X)$. Simarly, thanks to the unit property of $A$ and to the naturality of the unitors, it has an \textit{identity element} given by $\iota \iota^* \in \Hom(X, X)$, i.e., $\iota \iota^* \ast T= T = T\ast \iota \iota^*$ for all $T\in \Hom(X,X)$. Moreover, it is \textit{*-preserving} by the unitarity of the associator and of the unitors, i.e., $(T\ast S)^*=T^*\ast S^*$ for all $T, S\in \Hom(X, X)$.

\begin{proof}[Proof of Theorem \ref{theo:haploid_rigid_iff_Q-system}]
To prove the first claim, assume that $A=(X, m,\iota)$ is haploid and rigid and consider the finite dimensional $C^*$-algebra $\mathfrak{A}:=\Hom(X,X)$. Thanks to Lemma \ref{lem:frobenius_iff_special}, it is sufficient to deform $A$ into an equivalent (by Remark \ref{rem:equiv_and_unitary_equiv}, necessarily haploid) algebra satisfying specialness. More specifically, we shall keep $X$ fixed and deform $m$ and $\iota$. Namely, we want to find an invertible element $S\in\mathfrak{A}$ such that $ m_S:=S^{-1}  m (S\otimes S)$ satisfies 
	\begin{equation}  \label{eq:specialness_mu_S}
		m_S( m_S)^*=S^{-1}  m (SS^*\otimes SS^*) m^*(S^{-1})^*=\gamma \one_X
	\end{equation}
for some $\gamma\in\C$, which is necessarily positive. Let $T:=SS^*$ and recall the notation from \eqref{eq:convolution}. Then, finding $S$ as in \eqref{eq:specialness_mu_S} is equivalent to finding a strictly positive element $T\in\mathfrak{A}$ such that 
	\begin{equation}   \label{eq:specialness_strictly_positive}
		T \ast T = \gamma T
	\end{equation}
for some positive real number $\gamma$. 
	
To this end, define a linear map $\Phi:\mathfrak{A} \to \mathfrak{A}$ by setting
	\begin{equation}  \label{eq:map_Phi_def}
		\Phi(T):= T \ast \one_X \qquad\forall T\in\mathfrak{A} \,.
	\end{equation}
We first show that $\Phi$ is an irreducible positive map.
By the finite dimensionality of $\mathfrak{A}$, entailed by rigidity and by \cite[Lemma 3.2]{LR97}, we have that $\mathfrak{A}=\bigoplus_{i=0}^N\mathfrak{A}_i$ for some $N\in\Zplus$, where $\mathfrak{A}_i\cong \operatorname{Mat}_{n_i}(\C)$ for some $n_i\in\Zplus$, for every $i\in I:=\{0,\dots,N\}$, see e.g.\ \cite[Section III.1]{Dav96}. For each $i\in I$, call $E_i\in\mathfrak{A}_i$ the respective central projection, so that $\mathfrak{A}_i = E_i\mathfrak{A}E_i$. Then, note that $\one_X=\sum_{i=0}^N E_i$, and we can suppose that $\mathfrak{A}_0=\C E_0$ with $E_0=\beta\iota\iota^*$ for some positive real number $\beta$ thanks to the haploid condition. As a consequence, by the convention right above Definition \ref{def:rigid_haploid_alg}, the rigidity of $A$ is realized with $\varepsilon=\beta\iota^*$. Moreover, we can write any $T\in\mathfrak{A}$ as a sum $T=\sum_{i=0}^{N}T_i$ with $T_i\in\mathfrak{A}_i$. Hence
	\begin{equation}
		\Phi(T)=\sum_{i,j=0}^N\Phi_{ij}(T_j)
	\end{equation}
where $\Phi_{ij}: \mathfrak{A}_j\to\mathfrak{A}_i$ is defined by 	$\Phi_{ij}(\cdot):= E_i\Phi\restriction_{\mathfrak{A}_j}(\cdot)E_i$.

As an intermediate step, we prove that $\Phi^2$ is a strictly positive linear map. Note that $\Phi$ is a positive linear map by definition and thus each $\Phi_{ij}$ is too. For each $j\in I$, let $T_j\in\mathfrak{A}_j$ be any non-zero positive element, then we have that 
	$$
	\Phi_{0j}(T_j)=E_0  (T_j \ast \one_X) E_0
	= \iota\varepsilon m(T_j\otimes \one_X) m^* \iota\varepsilon
	=0
	$$
	if and only if (note that $\varepsilon m(T_j\otimes \one_X) m^* \iota \in \Hom(\id_\mathcal{C},\id_\mathcal{C}) \cong \C$ and $\iota \varepsilon = \beta\iota\iota^* = E_0$)
	$$
	\varepsilon m(T_j\otimes \one_X) m^* \iota
	=\varepsilon m(\sqrt{T_j}\otimes \one_X)[\iota^* m(\sqrt{T_j}\otimes \one_X)]^*
	=0	\,.
	$$
The latter equation (recall that $\varepsilon = \beta \iota^*$ with $\beta>0$) is equivalent to $\varepsilon m(\sqrt{T_j}\otimes \one_X)=0$, which is not possible in our setting thanks to the rigidity of $A$. 
Indeed, suppose by contradiction that this is not the case. Rigidity means that $e_A=\varepsilon m$ is part of a solution $e_A, i_A$ of the conjugate equations \eqref{eq:conj_eqns} for $X$ and $\overline{X}=X$. Thus, $e_A(\sqrt{T_j}\otimes \one_X)=0$ implies that
	\begin{equation}
		\begin{split}
			0
			&=
			l_X(e_A\otimes \one_X)a_{X,X,X}[\sqrt{T_j}\otimes (\one_X\otimes \one_X) ](\one_X\otimes i_A)r_X^{-1}\\
			&=
			l_X(e_A\otimes \one_X)a_{X,X,X}(\sqrt{T_j}\otimes i_A)r_X^{-1} \\
			&=
			l_X(e_A\otimes \one_X)a_{X,X,X}(\one_X\otimes i_A) r_X^{-1}r_X(\sqrt{T_j}\otimes \one_{\id_\mathcal{C}}) r_X^{-1} \\
			&=
			r_X(\sqrt{T_j}\otimes \one_{\id_\mathcal{C}}) r_X^{-1}  
		\end{split}
	\end{equation}
which is only possible if $T_j = 0$.
Therefore, it must be $\Phi_{0j}(T_j) \geq 0$ and $\Phi_{0j}(T_j)\not=0$ for each non-zero positive element $T_j\in\mathfrak{A}_j$. Moreover, $\Phi_{0j}(T_j)=\alpha_j E_0$ for some positive real number $\alpha_j$ because $\mathfrak{A}_0$ is one-dimensional thanks to the haploid condition.
Therefore, we conclude that
	\begin{equation}
		\begin{split}
			(\Phi^2)_{ij}(T_j) 
			 &=
			\sum_{k=0}^N \Phi_{ik}(\Phi_{kj}(T_j)) \\
			 &\geq 
			\Phi_{i0}(\Phi_{0j}(T_j)) \\
			 &=
			\alpha_j\Phi_{i0}(E_0) \\
			 &=
			\alpha_j E_i (E_0 \ast \one_X) E_i \\
			 &=
			\alpha_j \beta E_i (\iota\iota^* \ast \one_X) E_i \\
 			 &=
			 \alpha_j\beta E_i \one_X E_i \\
			 &=
			\alpha_j\beta E_i 
			\qquad\forall i,j\in I
		\end{split}
	\end{equation}
where we have used the positivity of $\Phi_{ij}:\mathfrak{A}_j \to \mathfrak{A}_i$ for all $i,j\in I$ and the properties of the convolution. Now, if $T=\sum_{i=0}^{N}T_i$ is positive and non-zero in $\mathfrak{A}$, then each $T_i$ is positive, and at least one of them, say $T_j$, is non-zero. Then, 
	\begin{equation}
		\begin{split}
				\Phi^2(T) &= \sum_{i,k=0}^N (\Phi^2)_{ik}(T_k) \\
				&\geq \sum_{i=0}^N (\Phi^2)_{ij}(T_j) \\
				&\geq \sum_{i=0}^N \alpha_j \beta E_i \\
				&= \alpha_j \beta \one_X 
		\end{split}
	\end{equation}
which implies that $\Phi^2:\mathfrak{A} \to \mathfrak{A}$ is a strictly positive map. 
\smallskip

We now prove that the positive map $\Phi$ is irreducible. As the dimension of the Hilbert space on which $\mathfrak{A}$ is realized is at least $n=2$ (unless $X \cong \id_\mathcal{C}$ in which case the statement of the theorem is trivial), then $(\id_\mathfrak{A} + \Phi)^{n-1}(T) > 0$ for $T$ positive and non-zero as above.
Indeed, if $n\geq3$ then $(\id_\mathfrak{A} + \Phi)^{n-1}(T) \geq \Phi^2(T)>0$.
If $n=2$, then $\mathfrak{A} = \mathfrak{A}_0 \oplus \mathfrak{A}_1$ with $\mathfrak{A}_1 \cong \C$ as well, namely $\mathfrak{A}_1 = \C E_1$.
In this case, it is enough to check that $(\id_\mathfrak{A} + \Phi)(T) > 0$ on the two projections $E_0$ and $E_1$. 
If $T=E_0$, observe that $\Phi(E_0) = \beta (\iota \iota^* \ast \one_X) = \beta \one_X$, where $\beta >0$, hence $E_0 + \Phi(E_0) > 0$.
If $T=E_1$, then $(\id_\mathfrak{A} +\Phi)(E_1) = \delta_0 E_0 +(1+ \delta_1) E_1$ for some real numbers $\delta_0,\delta_1 \geq 0$, which is strictly positive unless $\delta_0 = 0$. 
But, if $\delta_0 = 0$, then $\Phi(E_1) = \delta_1 E_1$, which is a contradiction as it would imply that $\Phi^2(E_1) = (\delta_1)^2 E_1$, which is not strictly positive.
Summing up, we have shown that $\Phi$ is an irreducible positive map. 
	
By Proposition \ref{prop:perron_frobenius}, there exists a unique (in the strong sense specified in the proposition) strictly positive eigenvector $T\in\mathfrak{A}$ with positive eigenvalue $\lambda$ for $\Phi$, namely such that $\Phi(T) = T\ast \one_X = \lambda T$. But also $T \ast T\in\mathfrak{A}$ is an eigenvector with the same eigenvalue, indeed $\Phi(T\ast T) = (T\ast T) \ast \one_X = T\ast (T \ast \one_X) = \lambda (T\ast T)$. Moreover, $T\ast T$ is positive by the definition of the convolution and non-zero. Indeed, $T\ast T = 0$ would imply $m (\sqrt{T} \otimes \sqrt{T}) = 0$, which is not possible as it would contradict the unit property of $A$.
By the uniqueness part of Proposition \ref{prop:perron_frobenius}, we conclude that $T\ast T = \gamma T$, for some positive real number $\gamma$.
In other words, the pair $(T,\gamma)$ is a solution of \eqref{eq:specialness_strictly_positive} and thus $(S:=\sqrt{T},\gamma)$ is a solution of \eqref{eq:specialness_mu_S}. In conclusion, the deformed triple $A_S:=(X,  m_S, \iota_S)$ with $ m_S:=S^{-1} m(S\otimes S)$ and $\iota_S:=S^{-1}\iota$, where $S$ happens to be strictly positive in $\mathfrak{A}$ and not only invertible, satisfies specialness and thus it is the desired Q-system equivalent to $A = (X,  m, \iota)$.
	
Conversely, suppose that the haploid algebra $A=(X, m,\iota)$ is equivalent to a Q-system. This means that there exists a deformed triple by the same formulae as above $A_S=(Y,  m_S, \iota_S)$, for an invertible $S\in\Hom(X,Y)$, which satisfies the $C^*$-Frobenius property, or equivalently specialness by Lemma \ref{lem:frobenius_iff_special}, thanks to the haploid assumption.
We have already observed in Remark \ref{rem:CstarFrob_is_rigid_and_real} that a $C^*$-Frobenius algebra is rigid, and in Remark \ref{rem:equiv_and_unitary_equiv} that rigidity is preserved under equivalence. Thus, $A$ is necessarily rigid.
	
For the second claim, note that any algebra is equivalent to a normalized one by Remark \ref{rem:associative_C-algebra_equiv_normalized}. Suppose that $A= (X,  m,  \iota)$ is haploid and rigid. Then, the Q-system $A_S= (X,  m_S, \iota_S)$ just constructed, can be considered to be normalized, by substituting the positive isomorphism $S\in\Hom(X,X)$ with an appropriate positive multiple, still denoted by $S$. By the haploid condition and by the strict positivity of $S$, $\iota_S=\alpha\iota$ for a positive real number $\alpha$. If $A$ is also normalized, then 
	$$
	\one_{\id_\mathcal{C}}=\iota_S^*\iota_S=\alpha^2\iota^*\iota=\alpha^2 \one_{\id_\mathcal{C}}
	$$
and thus $\alpha=1$, i.e., $\iota_S=\iota$. Conversely, if $\iota_S=\iota$, then $A$ is obviously normalized.
	
Finally, if $A$ is just haploid and rigid, then the standardness of $A_S$ follows from the fact that any haploid Q-system in a $C^*$-tensor category is standard,
see Remark \ref{rem:std_Qsys}. Thus, the proof of the theorem is complete.	
\end{proof}

We continue with some remarks and consequences:

\begin{rem}  \label{rem:rigid_iff_Frobenius} 
It follows by \cite[Lemma 3.7]{FRS02}, cf.\ \cite[Proposition 8]{FS08} (not necessarily in the $C^*$ context) that an algebra $A = (X,m,\iota)$ admits a \emph{Frobenius algebra structure} if and only if it is \emph{rigid}. The Frobenius algebra structure (namely the comultiplication solving the Frobenius relations) is uniquely determined by the counit, there denoted by $\varepsilon$. In the $C^*$ context, Theorem \ref{theo:haploid_rigid_iff_Q-system} can be rephrased as follows. If $A$ is haploid and rigid, and we set $\varepsilon := \iota^*$, then $A$ can be deformed to an equivalent algebra $A_S$ (still haploid and rigid) where the unique solution (the Frobenius algebra structure) is given by the comultiplication $\Delta := m^*$. In particular, the solution is a \emph{$C^*$-Frobenius algebra structure}.
\end{rem}

\begin{rem} 
Under the mild assumption that the object $X$ is rigid as an object in $\mathcal{C}$, it follows by \cite[Lemma 3.6]{Yam04b} that the rigidity of $A$ as an algebra is actually equivalent (not only implies, as we observed and used in the proof of Theorem \ref{theo:haploid_rigid_iff_Q-system}) to the faithfulness of the state $\iota^* \Phi(\cdot) \iota : \mathfrak{A} \to \Hom(\id_\mathcal{C},\id_\mathcal{C}) \cong \C$ on the finite dimensional $C^*$-algebra $\mathfrak{A} = \Hom(X,X)$, where $\Phi:\mathfrak{A}\to\mathfrak{A}$ is as in \eqref{eq:map_Phi_def}.
\end{rem}

Theorem \ref{theo:haploid_rigid_iff_Q-system} provides an answer to the following natural questions on the \emph{unitarizability} of categorical structures (algebras, modules) in the $C^*$-tensor category context:

\begin{rem}
	The idea of deforming a (not necessarily haploid) algebra $A$ in $\mathcal{C}$, to a special algebra is present at the end of \cite[Section 3.1]{BKLR15}, where an iterative method is proposed. Assuming the haploid property, the proof of Theorem \ref{theo:haploid_rigid_iff_Q-system} provides a solution of $T \ast T = \gamma T$, see \eqref{eq:specialness_strictly_positive}, for some positive real number $\gamma$, where $T$ is strictly positive in $\mathfrak{A} = \Hom(X,X)$. Namely, $T$ fulfills the desired \lq\lq regularity condition\rq\rq mentioned in \cite[Page 20]{BKLR15}.
\end{rem}

\begin{rem}
	In \cite[Question 3.7]{CHJP22}, it is asked whether every arbitrary Frobenius algebra in a \emph{unitary} tensor category (a semisimple rigid $C^*$-tensor category with simple unit in the terminology of \cite[Definition 2.11]{CHJP22}) is equivalent to a Q-system. By Remark \ref{rem:rigid_iff_Frobenius}, rigidity for an algebra (not necessarily haploid) is equivalent to the existence of a Frobenius algebra structure. Hence, assuming the haploid property, Theorem \ref{theo:haploid_rigid_iff_Q-system} provides a positive answer to \cite[Question 3.7]{CHJP22}. Below, we show that the answer becomes negative if the haploid assumption is dropped, see Example \ref{ex:CE1}.
\end{rem}

\begin{rem}
	By \cite[Theorem 3.1 and Proposition 3.1]{Ost03}, every semisimple indecomposable (right) module category $\mathcal{M}$ over a \emph{fusion} category $\mathcal{C}$ 
	(a semisimple rigid tensor category with simple tensor unit and \emph{finitely} many equivalence classes of simple objects) is equivalent to $\Mod_\mathcal{C}(A)$ 
	for some 
	rigid haploid algebra $A$ in $\mathcal{C}$. 
	Hence, our Theorem \ref{theo:haploid_rigid_iff_Q-system} gives a positive answer to \cite[Question 12]{Reu19}, namely whether every semisimple module category over a \emph{unitary} fusion category is equivalent to a $C^*$-module category, 
in the case of $\mathcal{M}$ indecomposable. 
\end{rem}

We summarize this latter discussion in:

\begin{cor}  \label{cor:unitarizability_module_cats_over_fusion}
	Let $\mathcal{M}$ be a semisimple indecomposable module category over a unitary fusion category $\mathcal{C}$. 
	Then, $\mathcal{M}$ is equivalent to $\Mod_\mathcal{C}(A)$ for some haploid Q-system $A$ in $\mathcal{C}$. Consequently, every (direct sum of) semisimple indecomposable module category $\mathcal{M}$ over a unitary fusion category $\mathcal{C}$ is unitarizable (uniquely up to unitary module equivalence).
\end{cor}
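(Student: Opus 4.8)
The plan is to combine the two main inputs already established in the excerpt. The key observation is that Ostrik's theorem (\cite[Theorem 3.1 and Proposition 3.1]{Ost03}) realizes any semisimple indecomposable right module category $\mathcal{M}$ over a fusion category $\mathcal{C}$ as $\Mod_\mathcal{C}(A)$ for some \emph{rigid haploid} algebra $A$ in $\mathcal{C}$. Since $\mathcal{C}$ is in fact a \emph{unitary} fusion category, it is a $C^*$-tensor category with simple tensor unit, so Theorem \ref{theo:haploid_rigid_iff_Q-system} applies to $A$. First I would invoke Ostrik to produce such a rigid haploid $A$ with $\mathcal{M}\simeq\Mod_\mathcal{C}(A)$ as module categories. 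Then, because $A$ is haploid and rigid, Theorem \ref{theo:haploid_rigid_iff_Q-system} yields an equivalent algebra $A_S=(X, S^{-1}m(S\otimes S), S^{-1}\iota)$ which is a (normalized, standard) haploid Q-system in $\mathcal{C}$.

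The remaining point is that passing from $A$ to the equivalent algebra $A_S$ does not change the associated module category up to equivalence. This is where I would argue that an equivalence of algebras $S\in\Hom(X,X)$ induces an equivalence of the corresponding module categories $\Mod_\mathcal{C}(A)\simeq\Mod_\mathcal{C}(A_S)$: a left $A$-module $(Y,m_Y)$ is sent to the left $A_S$-module $(Y, m_Y(S^{-1}\otimes\one_Y))$ (or the analogous formula dictated by the intertwining relations in Definition \ref{def:equiv_and_unitary_equiv}), with the inverse functor built from $S^{-1}$; the representation and unit properties are checked using $S m = m_S(S\otimes S)$ and $S\iota = \iota_S$. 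Chaining these equivalences gives $\mathcal{M}\simeq\Mod_\mathcal{C}(A)\simeq\Mod_\mathcal{C}(A_S)$, proving the first assertion: $\mathcal{M}$ is equivalent to $\Mod_\mathcal{C}(A)$ for some haploid Q-system $A$ (renaming $A_S$ as $A$).

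For the final consequence about unitarizability, I would proceed as follows. Since $A_S$ is a Q-system, the category $\Mod^{\opu}_\mathcal{C}(A_S)$ of \emph{unitary} left $A_S$-modules is a $C^*$-category, and the standard fact (as recalled around the definition of unitary module and in \cite{BKLR15,NY18}) is that every left $A_S$-module over a Q-system is isomorphic to a unitary one, so the forgetful inclusion $\Mod^{\opu}_\mathcal{C}(A_S)\hookrightarrow\Mod_\mathcal{C}(A_S)$ is an equivalence. This equips $\mathcal{M}$ with the structure of a $C^*$-module category, i.e.\ realizes it as a unitarizable module category. For a general (not necessarily indecomposable) semisimple module category, I would decompose it as a finite direct sum of indecomposable ones and apply the indecomposable case to each summand, taking the orthogonal direct sum of the resulting $C^*$-structures.

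The main obstacle I anticipate is the uniqueness clause — that the unitarization is unique up to unitary module equivalence. This should follow from the rigidity in Theorem \ref{theo:algebras_equiv_as_Qsystems} (two equivalent normalized haploid Q-systems are unitarily equivalent), which forces the underlying Q-system to be essentially unique and hence pins down the $C^*$-module category structure up to unitary equivalence; but marshalling the precise $2$-categorical statement (transporting the uniqueness of the Q-system through the module-category equivalences, and reconciling it with the chosen decomposition into indecomposables) is the delicate bookkeeping step. Everything else is a direct concatenation of Ostrik's realization, Theorem \ref{theo:haploid_rigid_iff_Q-system}, and the standard unitarizability of modules over a Q-system.
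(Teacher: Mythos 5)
Your proposal is correct and follows essentially the same route as the paper's proof: Ostrik's realization of $\mathcal{M}$ as $\Mod_\mathcal{C}(A)$ for a rigid haploid algebra $A$, Theorem \ref{theo:haploid_rigid_iff_Q-system} to replace $A$ by an equivalent haploid Q-system, the observation that an equivalence of algebras induces an equivalence of module categories, and the fact that modules over a Q-system are equivalent to unitary modules (\cite[Lemma 3.22]{BKLR15}), which form a $C^*$-category. The uniqueness parenthetical that you flag as delicate is treated no more explicitly in the paper (it is delegated to the cited references), and your only slip --- the pullback of an $A$-module to an $A_S$-module should use $m_Y(S\otimes\one_Y)$ rather than $m_Y(S^{-1}\otimes\one_Y)$ --- is already covered by your hedge about the formula dictated by the intertwining relations.
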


\begin{proof}
	An equivalence of algebras in $\mathcal{C}$ entails an equivalence of the respective module categories. Thus, the first claim follows from \cite[Theorem 3.1 and Proposition 3.1]{Ost03} and from Theorem \ref{theo:haploid_rigid_iff_Q-system}. 
	For the second claim, it is enough to observe that modules over special algebras (in particular over haploid Q-systems) are equivalent to unitary modules, see \cite[Lemma 3.22]{BKLR15}, and the latter form a unitary (in fact, $C^*$) category, see \cite[Section 2.2 and 2.3]{GY20} and \cite[Remark 6.2]{NY16}.
\end{proof}

By the techniques used in the proof of Theorem \ref{theo:haploid_rigid_iff_Q-system}, we can prove the following:

\begin{theo} \label{theo:algebras_equiv_as_Qsystems}
	Let $\mathcal{C}$ be a $C^*$-tensor category (with simple tensor unit). Then, any equivalence between normalized haploid (hence standard) Q-systems in $\mathcal{C}$ is necessarily unitary. Furthermore, there is a one-to-one correspondence between:
		\begin{itemize}
			\item[(i)] equivalence classes of rigid haploid algebras in $\mathcal{C}$;
			\item[(ii)] unitary equivalence classes of normalized haploid (hence standard) Q-systems in $\mathcal{C}$.
		\end{itemize}
\end{theo}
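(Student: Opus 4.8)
The plan is to deduce the asserted bijection from the unitarity statement together with Theorem~\ref{theo:haploid_rigid_iff_Q-system}, and to prove the unitarity statement by two applications of the Perron--Frobenius Proposition~\ref{prop:perron_frobenius}, exactly in the spirit of the proof of Theorem~\ref{theo:haploid_rigid_iff_Q-system}.

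For the bijection, a normalized haploid Q-system is a rigid haploid algebra by Remark~\ref{rem:CstarFrob_is_rigid_and_real}, and a unitary equivalence is an equivalence, so passing to equivalence classes gives a map from (ii) to (i). Conversely, Theorem~\ref{theo:haploid_rigid_iff_Q-system} assigns to a rigid haploid algebra $A$ an equivalent normalized (hence standard, by Remark~\ref{rem:std_Qsys}) haploid Q-system $A_S$; I would send $[A]$ to $[A_S]$. This is well defined because if $A\sim A'$ are equivalent rigid haploid algebras, then $A_S\sim A\sim A'\sim A'_{S'}$ are equivalent normalized haploid Q-systems, hence \emph{unitarily} equivalent by the first assertion. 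The two maps are inverse to each other: starting from a normalized haploid Q-system $A$ and running the construction produces $A_S\sim A$, again unitarily equivalent to $A$ by the first assertion. Thus the whole statement rests on showing that any equivalence of normalized haploid Q-systems is unitary.

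So let $S\in\Hom(X,Y)$ be an equivalence between normalized haploid Q-systems $A=(X,m,\iota)$ and $B=(Y,n,j)$, and put $P:=SS^*\in\Hom(Y,Y)$, a strictly positive element since $S$ is invertible. Writing $\ast$ for the convolution \eqref{eq:convolution} of $B$, i.e.\ $Q_1\ast Q_2:=n(Q_1\otimes Q_2)n^*$, the intertwining relation $Sm=n(S\otimes S)$ together with the specialness $mm^*=\lambda\one_X$ (with $\lambda=d(X)$, as $A$ is normalized haploid, hence standard) transports to $n(P\otimes P)n^*=\lambda P$, that is $P\ast P=\lambda P$. The goal is to prove $P=\one_Y$: then $S^*=S^{-1}$, so $S$ is unitary. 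I would obtain this as follows. The map $\Psi\colon Q\mapsto P\ast Q=n(P\otimes Q)n^*$ is positive, and it is \emph{irreducible} by the rigidity of $B$ and the strict positivity of $P$, by the same computation (with $\sqrt{P}$ inserted in the first leg) that establishes irreducibility in the proof of Theorem~\ref{theo:haploid_rigid_iff_Q-system}. Now $\Phi(P):=P\ast\one_Y$ is again strictly positive, and by associativity of the convolution $\Psi(\Phi(P))=P\ast(P\ast\one_Y)=(P\ast P)\ast\one_Y=\lambda\,\Phi(P)$, while $\Psi(P)=P\ast P=\lambda P$. Hence $P$ and $\Phi(P)$ are two strictly positive eigenvectors of the irreducible positive map $\Psi$ with the same eigenvalue, so by the uniqueness part of Proposition~\ref{prop:perron_frobenius} they are proportional, $\Phi(P)=cP$. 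Therefore $P$ is a strictly positive eigenvector of $\Phi=(\,\cdot\,)\ast\one_Y$, whose unique strictly positive eigenvector (again by Proposition~\ref{prop:perron_frobenius}) is $\one_Y$; thus $P=\alpha\one_Y$, and $P\ast P=\lambda P$ forces $\alpha=1$ (equivalently, normalization gives $\alpha=j^*j=1$). So $P=\one_Y$ and $S$ is unitary.

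The main obstacle is precisely the step $P=\one_Y$: a naive attempt to exhibit $P$ directly as a Perron--Frobenius eigenvector of $\Phi=(\,\cdot\,)\ast\one_Y$ does not work. The device that unlocks it is to apply Perron--Frobenius instead to \emph{left} convolution by $P$, so that the quadratic relation $P\ast P=\lambda P$ furnishes the two competing eigenvectors $P$ and $\Phi(P)$ and forces $\Phi(P)\propto P$; only afterwards does one return to $\Phi$. Two points must be handled with care here: the irreducibility of $\Psi$ (which uses rigidity of $B$ exactly as in Theorem~\ref{theo:haploid_rigid_iff_Q-system}, now with the strictly positive $P$ in place of $\one_Y$), and the final normalization $\alpha=1$, where one uses either the equality $\lambda=d(X)=d(Y)$ coming from standardness, or directly that $A$ and $B$ are normalized.
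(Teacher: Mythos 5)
Your proposal is correct and follows essentially the same route as the paper: the paper's proof also reduces the bijection to the unitarity claim plus Theorem \ref{theo:haploid_rigid_iff_Q-system}, forms a strictly positive element satisfying a convolution relation (there $T:=S^{-1}(S^{-1})^*\in\Hom(X,X)$ with $T\ast T=\lambda T$ from specialness of $B$), applies Proposition \ref{prop:perron_frobenius} to left convolution by that element (irreducible via rigidity) to force $\Phi(T)\propto T$, then applies it again to $(\,\cdot\,)\ast\one$ to get $T\propto\one$, and finally uses normalization to fix the scalar. Your version is just the mirror image of this argument (working with $P=SS^*$ on the target $Y$ and $B$'s convolution, i.e.\ the paper's construction applied to the inverse equivalence $S^{-1}$), with the polar decomposition step replaced by the equivalent observation that $SS^*=\alpha\one_Y$ already gives unitarity of $S/\sqrt{\alpha}$.
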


\begin{proof} 
	Let $A=(X,m,\iota)$ and $B=(Y,n,j)$ be two normalized haploid Q-systems. 
	Let $S\in\Hom(X,Y)$ be an isomorphism (not necessarily unitary) realizing the equivalence between $A$ and $B$ as algebras, see Definition \ref{def:equiv_and_unitary_equiv}. Recall the convolution $\ast$ introduced for the proof of Theorem \ref{theo:haploid_rigid_iff_Q-system}. Set $T:=S^{-1}(S^{-1})^*$ and note that $T \ast T = m(T\otimes T)m^*=\lambda T$ for some positive real number $\lambda$ by the specialness of the Q-system $B$. 
	
	Now, define a linear map $\Phi':\Hom(X,X)\to \Hom(X,X)$ by $\Phi'(R):=\one_X\ast R$ for all $R\in\Hom(X,X)$. Proceeding as in the proof of Theorem \ref{theo:haploid_rigid_iff_Q-system}, it is easy to see that the rigidity of $A$ implies that $(\Phi')^2$ is a strictly positive map. 
	Define also the linear map $\Psi:\Hom(X,X)\to \Hom(X,X)$ by $\Psi(R):=T\ast R$ for all $R\in\Hom(X,X)$. As $T=S^{-1}(S^{-1})^*$ is strictly positive, we have that $T\geq \delta \one_X$ for some positive real number $\delta$ and thus $\Psi(R)\geq \delta\Phi'(R)$ for all positive $R\in\Hom(X,X)$. It follows that $\Psi^2(R)\geq \delta^2(\Phi')^2(R)$ for all positive $R\in\Hom(X,X)$, and thus $\Psi^2$ is strictly positive because $(\Phi')^2$ is. 
	Again, proceeding as in the proof of Theorem \ref{theo:haploid_rigid_iff_Q-system}, we can use that $\Psi^2$ is strictly positive to prove that $\Psi$ is an irreducible positive map.
	By definition, $\Psi$ has $T$ as a strictly positive eigenvector with positive eigenvalue $\lambda$. By Proposition \ref{prop:perron_frobenius}, any other non-zero and positive eigenvector of $\Psi$ must be a positive multiple of $T$.
	Let $\Phi:\Hom(X,X)\to \Hom(X,X)$ be the irreducible positive linear map defined in the proof of Theorem \ref{theo:haploid_rigid_iff_Q-system} as $\Phi(R):=R\ast \one_X$ for all $R\in\Hom(X,X)$. Then, 
	$$
	\Psi(\Phi(T))=T\ast \Phi(T)= T\ast (T\ast \one_X)=(T\ast T)\ast \one_X
	=\lambda T\ast \one_X =\lambda \Phi(T) \,.
	$$
	Hence, $\Phi(T)$ is a non-zero positive eigenvector of $\Psi$ with eigenvalue $\lambda$ and thus there exists a positive real number $\alpha$ such that $\Phi(T)=\alpha T$. Again by Proposition \ref{prop:perron_frobenius} applied to $\Phi$, observing that $\one_X$ is a strictly positive eigenvector of $\Phi$ by the specialness of the Q-system $A$, we conclude that $S^{-1}(S^{-1})^*=T=\beta \one_X$ for some positive real number $\beta$. Using the polar decomposition of $(S^{-1})^*$, we get that $(S^{-1})^*=U\sqrt{T}=U\sqrt{\beta} \one_X=\sqrt{\beta} U$ for some unitary $U\in \Hom(X, Y)$. Then, $S=(\sqrt{\beta})^{-1}U$. By the normalization condition, $\one_{\id_\mathcal{C}} = j^* j = \iota^* S^* S \iota = \beta^{-1} \iota^* U^* U \iota = \beta^{-1} \one_{\id_\mathcal{C}}$. Then, $\beta = 1$ and $S=U$ is a unitary equivalence between the normalized haploid Q-systems $A$ and $B$. Finally, the one-to-one correspondence follows from Theorem \ref{theo:haploid_rigid_iff_Q-system} and from the first claim.
\end{proof}

\begin{rem} 
In \cite[after Theorem 2.9]{NY18}, the authors affirm that any isomorphism, see \cite[Definition 2.4]{NY18}, between irreducible (i.e., haploid, see \cite[Remark 2.7(i)]{NY18}) Q-systems is unitary up to a scalar factor. Nevertheless, their definition of isomorphism between $C^*$-Frobenius algebras is stronger than the definition of equivalence for algebras considered here, see Definition \ref{def:equiv_and_unitary_equiv}.
\end{rem}

Now, we provide a counterexample to the statement of Theorem \ref{theo:haploid_rigid_iff_Q-system}, if the haploid condition is removed (Example \ref{ex:CE1}). We also show that the haploid property alone does not imply rigidity (Example \ref{ex:CE2}).

\begin{ex}  \label{ex:CE1}
	Let $\mathcal{C} := \Hilb$ be the (trivial) rigid semisimple $C^*$-tensor category of finite dimensional complex Hilbert spaces, with tensor unit $\id_\mathcal{C} = \C$. We shall find a non-haploid Frobenius algebra in $\C$, which is \emph{not} equivalent to a Q-system.
	
	Let $X$ be the two-dimensional Hilbert space generated by the $2\times 2$ matrices $I := \begin{pmatrix} 1 & 0 \\ 0 & 1 \\ \end{pmatrix}$ and $N := \begin{pmatrix} 0 & 1 \\ 0 & 0 \\ \end{pmatrix}$. The multiplication morphism $m:X\otimes X \to X$ is defined by the ordinary matrix multiplication $(aI + bN)(cI + dN) = acI + (ad+bc)N$ for $a,b,c,d\in\C$. The unit morphism $\iota:\C \to X$ is defined by $a \mapsto aI$ for $a\in\C$. This algebra in $\mathcal{C}$ is \emph{not} haploid, as $\Hom(\C,X) \cong \C^2$, where we recall that the morphisms in $\Hilb$ are just the linear maps. Then $\varepsilon (aI+bN) := a+b$ defines a linear functional $\varepsilon:X\to\C$ such that the composition $\varepsilon m : X\otimes X \to \C$ is a non-degenerate pairing, i.e., the functional $\varepsilon$ is a Frobenius form.
	Recall that rigidity is equivalent to the existence of a Frobenius structure by \cite[Lemma 3.7]{FRS02}, see Remark \ref{rem:rigid_iff_Frobenius}.
	However, $(X,m,\iota)$ is \emph{not} equivalent to a $C^*$-Frobenius algebra (hence nor to a special one), because $X$ is not a $C^*$-algebra (it is generated by $I$ and by a nilpotent $N$) with respect to any involution, see \cite[Lemma 2.2]{NY18}. 
\end{ex}

\begin{ex}  \label{ex:CE2}
	Let $\mathcal{C} := \Rep(\Z_2)$ be the (symmetric) rigid semisimple $C^*$-tensor category of finite dimensional unitary representations of $\Z_2 = \Z/{2\Z}$, with tensor unit $\id_\mathcal{C} = \C$. We shall find a haploid algebra in $\mathcal{C}$ which is not rigid, i.e., not Frobenius.
	
	Let $X$ be the two-dimensional Hilbert space generated by $I$ and $N$ as in the previous example, with the inner product $(aI+bN | cI+dN) := \Tr ((aI+bN)^*(cI+dN)) = 2 \overline{a}c + \overline{b}d$, where $\Tr$ is the ordinary non-normalized trace. Endow $X$ with the unitary $\Z_2$-action given by $-1 \cdot (aI+bN) := S (aI+bN) S^{-1} = aI-bN$, where $S := \begin{pmatrix} 1 & 0 \\ 0 & \!\!\!\! -1 \\ \end{pmatrix}$. Hence $X$ is an object in $\mathcal{C}$. Now, $(X,m,\iota)$ is an algebra in $\mathcal{C}$ with the same operations defined before. But it is haploid in this case, as the linear map $b \mapsto b N$ does not intertwine the $\Z_2$-actions on $\C$ and $X$. The adjoint of the unit morphism $\iota^* : X \to \C$ is given by $\iota^*(aI+bN) = 2a$. The haploid algebra $(X,m,\iota)$ does not admit a Frobenius structure with counit $\iota^*$, as the composition $\iota^* m : X \otimes X \to \C$ is degenerate. Indeed, $N^2 = 0$ and $N I = N$ is sent to $\iota^*(N) = 0$, while $N$ is not zero.
\end{ex}

\section{General applications to chiral CFT}	\label{sec:applications}

In this section, we apply the tools just developed to obtain general results in the theory of vertex operator (super)algebras and (graded-local) conformal nets. In Section \ref{sec:unitarity_schellekens_list} and Section \ref{sec:superconformal_VOAs}, we draw consequences for various notable chiral conformal field theory (chiral CFT) models.

In the following exposition, we assume that the reader is familiar with the two axiomatic frameworks and with the relations between them. Nevertheless, for the less expert readers, we give the necessary references here below and across the various sections whenever appropriate, and then we recall standard notations.

\begin{genref}
The theory of \textbf{vertex operator algebras} (VOAs) can be found e.g.\ in \cite{Kac01,FLM88,LL04,FHL93}. The last two are also a classical introduction to the theory of VOA modules and, just the last one, to the theory of intertwining operators. VOA modules and intertwining operators are the building blocks of the braided tensor category theory of VOA representations, which was developed in the series of papers \cite{HL95I, HL95II, HL95III, Hua95, Hua08}. See \cite[Section 2]{HKL15} for a brief review. 
\cite{Kac01} and \cite{Li96} include the theory of \textbf{vertex operator superalgebras} (VOSAs), which will be necessary in some of our applications. 

Regarding the \textbf{unitary theory} for VOAs, we refer the reader to \cite{DL14} and to \cite[Chapter 5]{CKLW18}; whereas we refer to \cite{AL17a}   for the unitary theory of VOSAs, see also \cite{CGH} and \cite[Chapter 2]{Gau21}. Turning back to the representation theory for VOAs, the unitary modular tensor structure for VOA representations was established in \cite{Gui19I, Gui19II}. 

For the theory of (local) \textbf{conformal nets} see e.g.\ \cite[pp.\ 1–2 and Section 3.3]{CKLW18}. Instead, \textbf{graded-local conformal nets}, see \cite{CKL08}, are the operator algebraic counterpart of VOSAs.

The correspondence between VOAs and conformal nets is established in \cite{CKLW18}. Moreover, \cite{Gui21a} and \cite{Gui20} study the relation between the respective representation theories for VOAs and conformal nets. 
\end{genref}

The (locally normal) representations of an irreducible conformal net $\A$ form a braided $C^*$-tensor category with simple tensor unit, often denoted by $\Rep(\A)$, see \cite{FRS89, FRS92}, based on DHR superselection theory \cite{DHR71, DHR74,Haa96}. An equivalent approach through Connes' fusion is discussed e.g.\ in \cite{Was95,Was98}, see also \cite[Section 6]{Gui21a} and \cite{Gui21b}. 
\textit{Finite index} representations, see e.g.\ \cite[Section 2.1]{GL96} and \cite[Section 2.2]{Car04}, determine a full subcategory of $\Rep(\A)$, sometimes denoted by $\Repf(\A)$. 
We will often deal with irreducible conformal nets that are \textbf{completely rational}, see \cite[Definition 8]{KLM01}.  By \cite[Theorem 4.9]{LX04} and \cite[Theorem 5.4]{MTW18}, a conformal net is completely rational if and only if it has finitely many inequivalent irreducible representations and all of them have finite index or, equivalently, if and only if it has finite $\mu$-index, see \cite[Theorem 5.3]{LX04}. 
If a conformal net $\mathcal{A}$ is completely rational then every representation in $\Rep(\A)$ is a (possibly infinite) direct sum of irreducible ones and the objects in $\Repf(\A)$ are exactly the finite direct sums of irreducibles in $\Rep(\A)$. Moreover,  $\Repf(\A)$ is a unitary modular tensor category, see \cite[Corollary 37 and Corollary 39]{KLM01}.

Recall that every graded-local conformal net $\A$ has a grading unitary, i.e., a unitary self-adjoint automorphism of the defining vacuum Hilbert space of $\A$. The eigenspaces of the operator induced by the adjoint action, of eigenvalues $1$ and $-1$ are called $\A_\parzero$, the \textbf{even} part, and $\A_\parone$, the \textbf{odd} part, of $\A$ respectively.
In the same way, a VOSA $V$ has a parity automorphism, i.e., a VOSA automorphism of order two, such that $V=V_\parzero\oplus V_\parone$, where $V_\parzero$ and $V_\parone$ are the eigenspaces of eigenvalues $1$ and $-1$ respectively. They are called the \textbf{even} and the \textbf{odd} part of $V$ respectively. Then, a VOA is simply a VOSA with trivial parity automorphism, i.e., $V_\parone=\{0\}$. 

A VOSA $V$ is said to be \textbf{simple} if the only ideals are $\{0\}$ and $V$ itself, which corresponds to $V$ being irreducible as $V$-module. Instead, a VOSA $V$ is said to be \textbf{of CFT type} if $V_0=\C\Omega$ and $V_n=\{0\}$ for all negative $n$. Note that the latter condition is actually automatic for VOAs with just $V_0=\C\Omega$, see \cite[Remark 4.5]{CKLW18}. Furthermore, simplicity and the CFT type condition are equivalent for unitary VOSAs, see \cite[Proposition 5.3]{CKLW18} and \cite[Proposition 2.14]{Ten19b}, cf.\ also \cite{CGH} and \cite[Proposition 2.3.6]{Gau21}. 
Recall that a VOA $V$ is said to be \textbf{self-dual} (or self-contragredient) if, as $V$-module, it is isomorphic to its \textit{contragredient module}. This is equivalent to the existence of a non-degenerate \textit{invariant} bilinear form on $V$, see e.g.\ \cite[Section 4.3]{CKLW18}. Consequently, any self-dual VOA of CFT type is simple by \cite[Proposition 4.6(iv)]{CKLW18}. Moreover, any unitary VOA is self-dual by definition, see e.g.\ \cite[Section 5.1]{CKLW18}.

\begin{defin}
	A VOA $V$ is said to be \textbf{regular} if every weak $V$-module is a direct sum of irreducible (ordinary) $V$-modules. $V$ is said to be \textbf{strongly rational} (or sometimes strongly regular) if, in addition, it is of CFT type and self-dual.
\end{defin}	

It is well known that regularity implies \textit{rationality} and $C_2$-\textit{cofiniteness}. On the other hand, the two latter properties together imply regularity under the assumption of CFT type. In particular, rationality implies that there are only finitely many irreducible admissible $V$-modules and every admissible $V$-module is actually ordinary. See \cite{ABD04} and references therein for definitions and results. Note that a strongly rational VOA is also simple, rational and $C_2$-cofinite. Lastly, if $V$ is strongly rational, then the $V$-modules form a modular tensor category $\Rep(V)$, see \cite{Hua08} and references therein.

\begin{defin}   \label{defin:complete_unitarity}
A regular VOA $V$ of CFT type is said to be \textbf{completely unitary} if it is \emph{unitary}, if every $V$-module is unitarizable (in this case, $V$ is said to be \textit{strongly unitary}) and if the \textit{invariant sesquilinear form} $\Lambda$ on the categorical tensor product of $V$-modules, as defined in \cite[Section 6.2]{Gui19II}, see also \cite[Definition 1.7]{Gui21c}, is positive definite. 
\end{defin}	

\begin{rem}  \label{rem:complete_unitarity}
Let $V$ be as in Definition \ref{defin:complete_unitarity}. Note that $V$ is always strongly rational. By strong unitarity, the $C^*$-category $\Repu(V)$ of unitary $V$-modules, see \cite[Section 2.4]{Gui19I}, is linearly equivalent to $\Rep(V)$. Moreover, the invariant sesquilinear form $\Lambda$ turns $\Repu(V)$ into a unitary modular tensor category by \cite[Theorem 7.9]{Gui19II}. In particular, the forgetful functor from $\Repu(V)$ to $\Rep(V)$ is an equivalence of modular tensor categories.
\end{rem} 

\begin{rem}   \label{rem:uniqueness_unitary_structure}
The unitary structure of a simple unitary VOA is unique up to unitary isomorphism. Furthermore, the unitary structure is really unique, that is not just up to  isomorphism, if every automorphism of the VOA is unitary, see the discussion just after \cite[Proposition 5.19]{CKLW18}. 
\end{rem}

The following is a result of independent interest, which will be needed in some of the applications in Section \ref{subsec:applications_VOAs}.

\begin{prop}   \label{prop:uniqueness_module_with_zero_weight}
	Let $V$ be a simple unitary VOA and $M$ be an irreducible unitary (ordinary) $V$-module. Then the conformal Hamiltonian $L^M_0$ of the module $M$ has only non-negative eigenvalues. Moreover, if there exists a non-zero vector $v\in M$ of conformal weight $d_v=0$, then $M$ has a VOA structure, which makes it a VOA isomorphic to $V$. Hence, in the latter case, the module $M$ is equivalent to the adjoint $V$-module $V$. 
\end{prop}

\begin{proof}
	For any $a\in V$, let $Y^M(a,z)=\sum_{n\in\Z}a^M_{(n)}z^{-n-1}$ denote any vertex operator on the $V$-module $M$. Let $\Omega$ and $\nu$ be the vacuum and the conformal vector of $V$. Moreover, we use the symbol $\scalar_M$ for the scalar product making $M$ into a unitary $V$-module.  
	
	The endomorphisms $L^M_n$, $n \in \mathbb{Z}$ defined by $Y^M(\nu,z)=\sum_{n\in\Z}L^M_{n}z^{-n-2}$ give a representation of the Virasoro algebra 
	on $M$ with central charge $c$ (the cental charge of $V$). This representation is unitary because $M$ is a unitary $V$-module and consequently the eigenvalues 
	of $L^M_0$ are non-negative real numbers, see \cite{KR87}, cf.\ also \cite[Proposition 1.7]{Gui19I}.	
	
	Let us now assume that $L^M_0v =0$ for some non-zero $v \in M$.  Then $L^M_{1}v=0$ because $L^M_0L^M_{1}v = -L^M_{1}v$. Thus, we have also that $L^M_{-1}v=0$ as
	$$
	(L^M_{-1}v|L^M_{-1}v)_M=(v|L^M_1L^M_{-1}v)_M=(v|2L^M_0v)_M=0 \,.
	$$
	Now, let $\phi: V \to M$ be the linear map defined by $\phi(a)=a^M_{(-1)}v$. By  \cite[Proposition 4.2.1]{LL04} and \cite[Theorem 1.5]{DSK06} $\phi$ is an injective
	map. 
	By the existence theorem \cite[Theorem 4.5]{Kac01} $M$ have a VOA structure with vacuum vector $\Omega^M :=v$, conformal vector 
	$\nu^M : = \nu^M_{(-1)}v$ and state-field correspondence $m \mapsto \tilde{Y}(m,z)$, $m \in M$ satisfying 
	$\tilde{Y}(\phi(a),z) = Y^M(a,z)$, for all $a\in V$.  It follows that 
	$$\phi(a_{(n)}b) = (a_{(n)}b)^M_{(-1)}v =  a^M_{(n)} \phi(b)$$ 
	for all $a,b \in V$ and all $n \in \Z$.  In particular, $\phi(V)$ is a non-zero V-submodule of $M$ and hence, by the irreducibility of $M$, $\phi(V)=M$, i.e.\  $\phi$ is also 
	surjective. Moreover, $\phi$ is a VOA isomorphism.
	\end{proof}

\subsection{Applications to VOAs and conformal nets} \label{subsec:applications_VOAs}

The next important result, see Theorem \ref{theo:unitarity_voa_extensions}, is a first application of Theorem \ref{theo:haploid_rigid_iff_Q-system} and of Theorem \ref{theo:algebras_equiv_as_Qsystems} to the VOA theory. We first recall the definition of a (simple CFT type) VOA extension, see e.g.\ \cite{HKL15}.  For the related definition of \emph{vertex subalgebra}, see e.g.\ \cite[Section 4.3]{Kac01}.  For \emph{unitary (vertex) subalgebras}, see \cite[Section 5.4]{CKLW18} and also \cite{CGH19}. 

\begin{defin}
\label{def:VOA extension}
A simple VOA $U$ of CFT type is called a \textbf{simple CFT type VOA extension} of a simple VOA $V$ of CFT type if the latter, namely $V$, is a vertex subalgebra of $U$ with the same conformal vector. Moreover, $U$ is said in addition to be a simple \textbf{unitary} CFT type VOA extension of $V$ if both VOAs are unitary and if the unitary structure of $U$ restricts to the one of $V$, i.e., $V$ is a unitary subalgebra of $U$.
\end{defin}

\begin{theo}   \label{theo:unitarity_voa_extensions}
	Let $V$ be a regular completely unitary VOA of CFT type and $U$ be a simple CFT type VOA extension of $V$. Then, $U$ is automatically unitary as simple CFT-type VOA extension of $V$, and it is also a completely unitary VOA. 
	Furthermore, if $\phi:U_1\to U_2$ is a VOA isomorphism between simple unitary CFT-type VOA extensions of $V$, which restricts to the identity map on $V$, then $\phi$ is unitary and $U_1$ and $U_2$ are unitarily isomorphic. In particular, the unitary structure on $U$ extending the given one on $V$ is unique (not just up to unitary isomorphism, see Remark \ref{rem:uniqueness_unitary_structure}).
\end{theo}

\begin{proof}
Throughout the proof, we use $Y_U$ and $Y^M$ for the state-field correspondences of the VOA $U$ and of a $U$-module $M$ respectively.	
	
Note that, since $V$ is strongly unitary, $U$ is a \textit{preunitary} VOA extension of $V$ in the sense of \cite[Section 2.1]{Gui21c}, i.e., it has a scalar product whose restriction to $V$ is the one realizing the unitary structure of $V$. Furthermore, we can obviously associate to $U$ a $V$-module $(W,Y^W)$ such that $W=U$ as $\C$-vector spaces and $Y^W(a,z)=Y_U(a,z)$ for all $a\in V$. Therefore, $U$ determines a normalized haploid commutative algebra $A=(W,\mu,\iota)$ in $\Repu(V)$ with trivial twist, where $\mu\in\Hom(W\otimes W,W)$ is the map associated to $Y^W$ and $\iota\in\Hom(V,W)$ is the obvious inclusion map of $V$ into $W$, see \cite[Theorem 3.2]{HKL15} for details and Remark \ref{rem:complete_unitarity}, cf.\ also \cite[Proposition 2.1]{Gui21c}. Note also that $U$ is irreducible as $U$-module by simplicity. Therefore, $A$ is rigid by \cite[Lemma 1.20, cf.\ Theorem 5.2]{KO02}, cf.\ also \cite[Theorem 3.6 and Remark 3.7]{HKL15}.

Now, $A$ satisfies all the hypotheses of Theorem \ref{theo:haploid_rigid_iff_Q-system} and thus there exists a strictly positive isomorphism $S\in\Hom(W,W)$ such that $A_S=(W, \mu_S, \iota)$ is a normalized haploid 
Q-system in $\Repu(V)$ equivalent to $A$, where $\mu_S:=S^{-1}\mu(S\otimes S)$. Note that $A_S$ is also commutative by Remark \ref{rem:equiv_and_unitary_equiv} and has the same twist of $A$ which is trivial, cf.\ Proposition \ref{prop:trivial_twist}. Therefore, $A_S$ determines a unitary CFT type VOA extension $U_S$ such that $U_S=U$ as $\C$-vector spaces thanks to \cite[Theorem 3.2]{HKL15}, cf.\ also \cite[Theorem 3.6]{HKL15} and \cite[Theorem 5.2]{KO02}, together with \cite[Proposition 1.7]{Gui19I}, Proposition \ref{prop:uniqueness_module_with_zero_weight} and \cite[Theorem 2.21]{Gui21c}. We remark that the unitary structure on $U_S$ restricts to the one of $V$, as $A$ and $A_S$ share the same $\iota$. (Note that $U_S$ is also completely unitary by \cite[Theorem 3.30]{Gui21c}.)

By \cite[Eq.\ (1.2)]{Gui21c}, see also \cite[Section 2.4]{Gui19I} and references therein, the definition of $\mu_S$ and the fact that $S\iota=\iota$ imply that $S$ is an actual VOA isomorphism from $U_S$ to $U$. As a consequence, $U$ has a unitary structure induced by the one of $U_S$ through $S$, making it a simple unitary VOA extension of $V$. Then, the complete unitarity of $U$ follows from \cite[Theorem 3.30]{Gui21c}, so proving the first claim.

The second claim follows from Theorem \ref{theo:algebras_equiv_as_Qsystems} and the discussion here above. Moreover, let $\scalar$ and $\curlyscalar$ be two unitary invariant scalar products on $U$, which restricts to the given one on $V$. The identity map from $(U,\scalar)$ to $(U,\curlyscalar)$ is a VOA automorphism, which restricts to the identity map from $(V,\scalar)$ to $(V,\curlyscalar)$ (which is also obviously unitary). Then, the former identity map is unitary and thus $\scalar$ and $\curlyscalar$ are the same, that is the unitary structure on $U$ extending the given one on $V$ is unique.
\end{proof}

\begin{cor}  \label{cor:unitary_structure_double_extension}
	Let $V$ be a regular completely unitary VOA of CFT type. Let $\widetilde{U}$ and $U$ be two simple CFT type extensions of $V$ such that $V\subset \widetilde{U} \subset U$. Then, there exists a unitary structure on $U$ making $V$ and $\widetilde{U}$ unitary subalgebras.
\end{cor}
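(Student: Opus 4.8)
The plan is to produce the unitary structure on $U$ by iterating Theorem \ref{theo:unitarity_voa_extensions}, using $\widetilde{U}$ as a stepping stone. First I would view $\widetilde{U}$ as a simple CFT type VOA extension of $V$. Since $V$ is completely unitary of CFT type, Theorem \ref{theo:unitarity_voa_extensions} immediately yields that $\widetilde{U}$ is itself completely unitary, and that it is unitary as an extension of $V$; in particular $V$ sits inside $\widetilde{U}$ as a unitary subalgebra, with the inner product of $\widetilde{U}$ restricting to that of $V$.

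Next I would observe that $U$ is itself a simple CFT type VOA extension of $\widetilde{U}$. The key point is the common conformal vector: by Definition \ref{def:VOA extension} both $\widetilde{U}$ and $U$ share the conformal vector of $V$, so $\widetilde{U}$ is a vertex subalgebra of the simple CFT type VOA $U$ with the same conformal vector, which is precisely the definition of a simple CFT type extension. Having established in the first step that $\widetilde{U}$ is completely unitary, I may apply Theorem \ref{theo:unitarity_voa_extensions} a second time, now to the pair $\widetilde{U}\subset U$. This equips $U$ with a unitary structure relative to which $\widetilde{U}$ is a unitary subalgebra and whose inner product restricts to that of $\widetilde{U}$.

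It then remains to check that this same unitary structure on $U$ also makes $V$ a unitary subalgebra, i.e.\ that being a unitary subalgebra is transitive along $V\subset\widetilde{U}\subset U$. This is the only genuinely non-formal point, but it is straightforward from the definition in \cite[Section 5.4]{CKLW18}: the PCT operator $\theta_U$ of $U$ restricts to that of $\widetilde{U}$, which in turn restricts to that of $V$, so that $V$ is $\theta_U$-invariant; combined with the (manifestly transitive) restriction of the inner product, this shows that $V$ is a unitary subalgebra of $U$. I expect the verification of this transitivity of the unitary-subalgebra property to be the main obstacle; once it is in place, the single unitary structure obtained from the second application of Theorem \ref{theo:unitarity_voa_extensions} makes both $V$ and $\widetilde{U}$ unitary subalgebras of $U$, which is exactly the assertion of the corollary.
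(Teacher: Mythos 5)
Your proposal is correct and takes essentially the same route as the paper: apply Theorem \ref{theo:unitarity_voa_extensions} first to $V\subset\widetilde{U}$ (so that $\widetilde{U}$ becomes regular and completely unitary) and then to $\widetilde{U}\subset U$. The transitivity of the unitary-subalgebra property along $V\subset\widetilde{U}\subset U$, which you verify via restriction of the scalar products and of the (unique) PCT operators, is exactly the point the paper's one-line proof leaves implicit, and your check of it is sound.
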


\begin{proof}
	It is sufficient to apply Theorem \ref{theo:unitarity_voa_extensions} to $V\subset\widetilde{U}$ first, and then to $\widetilde{U}\subset U$, reminding that as an effect of the first application, $\widetilde{U}$ turns out to be regular and completely unitary.
\end{proof}

Referring e.g.\ to \cite[Section 3.4]{CKLW18}, see also \cite{Car04,CGH19,BDG22b,Lon03}, for the definition and properties of \emph{covariant subnets}, as done before for VOA extensions in Definition \ref{def:VOA extension}, we recall the following (cf.\  e.g.\  \cite{Car04,KL04}):

\begin{defin}
	An \textbf{irreducible conformal net extension} of an irreducible conformal net $\A$ is an irreducible conformal net $\B$, containing $\A$ as covariant subnet and such that the Virasoro subnet of $\B$ \emph{coincides} with the Virasoro subnet of $\A$.
\end{defin}

Indeed, we want to relate simple CFT type VOA extensions and irreducible conformal net extensions by means of the fact that both are determined by Q-systems in their respective unitary modular tensor categories of representations, see \cite{HKL15}, \cite{Gui21c} and \cite{LR95} respectively. To this end, we have to look at the core of the correspondence between unitary VOAs and conformal nets, referring to \cite{CKLW18} for details. One crucial point is that it is possible to associate to a simple unitary VOA $V$ a unique up to isomorphism irreducible conformal net $\A_V$ on the Hilbert space completion $\mathcal{H}_V$ of the vector space $V$, with respect to its scalar product, provided that certain analytic conditions on the vertex operators are satisfied. A VOA satisfying such analytic conditions is said to be \textbf{strongly local}, see \cite[Definition 6.7]{CKLW18}. 

As a further consequence of Theorem \ref{theo:unitarity_voa_extensions} and Theorem \ref{theo:haploid_rigid_iff_Q-system}, or better Theorem \ref{theo:algebras_equiv_as_Qsystems}, we obtain the following:

\begin{theo}   \label{theo:equiv_VOA_net_extensions}
	Let $V$ be a regular completely unitary VOA of CFT type. Suppose that $V$ is strongly local and that the associated irreducible conformal net $\A_V$ is completely rational. If $\Repu(V)$ is equivalent to $\Repf(\A_V)$ as a unitary modular tensor category, then there is a one-to-one correspondence between the following:
	\begin{itemize}
		\item[(i)] isomorphism classes of simple CFT type VOA extensions of $V$;
		
		\item[(ii)] (necessarily unitary) equivalence classes of normalized haploid (hence standard) commutative Q-systems in $\Repu(V)\cong \Repf(\A_V)$;
		
		\item[(iii)] isomorphism classes of irreducible conformal net extensions of $\A_V$.
	\end{itemize}
Furthermore, let $U$ be a simple CFT type VOA extension of $V$ and $\A_V^U$ be the corresponding irreducible conformal net extension of $\A_V$. Then, $U$ is a simple unitary CFT type VOA extension of $V$, it is completely unitary, and $\Repu(U)$ is equivalent to $\Repf(\A_V^U)$ as a unitary modular tensor category.
\end{theo}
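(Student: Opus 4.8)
The plan is to establish the two bijections (i)$\leftrightarrow$(ii) and (ii)$\leftrightarrow$(iii) separately and then compose them. For (i)$\leftrightarrow$(ii), I would start from the correspondence of \cite{HKL15} between isomorphism classes of simple CFT type VOA extensions of $V$ and equivalence classes of rigid haploid commutative algebras in $\Rep(V)$, the rigidity of the algebra attached to such an extension being a consequence of its simplicity by \cite[Lemma 1.20]{KO02}. Since $V$ is completely unitary, Remark \ref{rem:complete_unitarity} identifies $\Rep(V)$ with $\Repu(V)$ as unitary modular tensor categories, so these algebras may be viewed in $\Repu(V)$. Theorem \ref{theo:algebras_equiv_as_Qsystems} then gives a bijection between equivalence classes of rigid haploid algebras and unitary equivalence classes of normalized haploid (hence standard) Q-systems; since commutativity is invariant under (even non-unitary) equivalence by Remark \ref{rem:equiv_and_unitary_equiv}, this bijection restricts to the commutative objects on both sides, yielding (i)$\leftrightarrow$(ii) and explaining the parenthetical \lq\lq necessarily unitary\rq\rq\ in (ii).

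For (ii)$\leftrightarrow$(iii), I would transport Q-systems across the given unitary modular tensor equivalence $\Repu(V)\cong\Repf(\A_V)$ using Proposition \ref{prop:1-1_equiv_algebras_under_categ_equiv}, which preserves commutativity and the Q-system property. On the net side, the theory of \cite{LR95} (see also \cite{KLM01,BKLR15}) identifies isomorphism classes of irreducible local extensions of the completely rational net $\A_V$ with unitary equivalence classes of normalized haploid commutative Q-systems with trivial twist in $\Repf(\A_V)$. The point is that the trivial-twist condition is automatic here: a haploid commutative Q-system is standard by Remark \ref{rem:std_Qsys}, and a standard commutative Q-system has trivial twist by Proposition \ref{prop:trivial_twist}. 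Hence (ii) and (iii) are in bijection, and composing the two bijections gives (i)$\leftrightarrow$(iii).

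For the final assertions, fix a simple CFT type VOA extension $U$ of $V$, with associated commutative algebra $A$ in $\Repu(V)$, and let $\A_V^U$ be the corresponding net extension. By Theorem \ref{theo:unitarity_voa_extensions}, $U$ is automatically a simple unitary CFT type extension of $V$ and is completely unitary. It remains to match the two representation categories. I would identify $\Repu(U)$ with the category $\Mod^{\opu,0}_{\Repu(V)}(A_S)$ of unitary local modules over the normalized haploid commutative Q-system $A_S$ equivalent to $A$ (using the complete unitarity of $U$ together with the module-category description of extensions in \cite{Gui21c,HKL15,KO02}), and similarly identify $\Repf(\A_V^U)$ with $\Mod^{\opu,0}_{\Repf(\A_V)}(Q)$, where $Q$ is the transported Q-system, via \cite{LR95,KLM01}. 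Since the unitary modular tensor equivalence $\Repu(V)\cong\Repf(\A_V)$ carries $A_S$ to a Q-system unitarily equivalent to $Q$, it induces an equivalence of the associated unitary local module categories, and this equivalence respects the braiding and ribbon structures, hence is one of unitary modular tensor categories. This yields $\Repu(U)\cong\Repf(\A_V^U)$ as claimed.

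The main obstacle I anticipate is precisely this last step: making the identifications $\Repu(U)\cong\Mod^{\opu,0}_{\Repu(V)}(A_S)$ and $\Repf(\A_V^U)\cong\Mod^{\opu,0}_{\Repf(\A_V)}(Q)$ precise and mutually compatible, so that the transported equivalence is genuinely one of unitary \emph{modular tensor} categories rather than merely of linear or $C^*$-categories. The bijections at the level of objects are comparatively routine once Theorems \ref{theo:haploid_rigid_iff_Q-system} and \ref{theo:algebras_equiv_as_Qsystems} and the trivial-twist observation are in hand.
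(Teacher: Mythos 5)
Your proposal follows essentially the same route as the paper's proof: (i)$\leftrightarrow$(ii) via \cite{HKL15}, \cite[Lemma 1.20]{KO02}, Remark \ref{rem:complete_unitarity} and Theorem \ref{theo:algebras_equiv_as_Qsystems}; (ii)$\leftrightarrow$(iii) via Proposition \ref{prop:1-1_equiv_algebras_under_categ_equiv}, \cite{LR95} and the automatic triviality of the twist (Remark \ref{rem:std_Qsys} plus Proposition \ref{prop:trivial_twist}); and the final statement via Theorem \ref{theo:unitarity_voa_extensions} together with the identification of both $\Repu(U)$ and $\Repf(\A_V^U)$ with the category of unitary local modules over the Q-system. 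The step you flag as the main obstacle is resolved in the paper exactly as you envisage, by quoting \cite[Theorem 3.30]{Gui21c} on the VOA side and \cite[Proposition 6.4]{BKL15} on the conformal net side, both of which produce unitary modular tensor equivalences with $\Mod^{\opu,0}_{\mathcal{C}}(A)$.
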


\begin{proof}   

The proof of the first part goes as follows.
On the one hand, \cite[Theorem 3.2]{HKL15}, \cite[Lemma 1.20, cf.\ Theorem 5.2]{KO02}, cf.\ also \cite[Theorem 3.6 and Remark 3.7]{HKL15}, together with Remark \ref{rem:complete_unitarity}, \cite[Proposition 1.7]{Gui19I} and Proposition \ref{prop:uniqueness_module_with_zero_weight}, cf.\ also \cite[Proposition 2.1]{Gui21c}, say that isomorphism classes of simple CFT type VOA extensions of $V$ are in one-to-one correspondence with equivalence classes of rigid haploid commutative algebras in $\Repu(V)$ with trivial twist. 
By Theorem \ref{theo:algebras_equiv_as_Qsystems} and Remark \ref{rem:equiv_and_unitary_equiv}, the latter are in one-to-one correspondence with unitary equivalence classes of normalized haploid (hence standard) commutative Q-systems (with trivial twist, which is actually automatic by \cite[Theorem 3.25]{Gui21c} or Proposition \ref{prop:trivial_twist}) in $\Repu(V)$. 
On the other hand, isomorphism classes of irreducible extensions of $\A_V$ correspond bijectively to unitary equivalence classes of normalized haploid (hence standard) commutative Q-systems in $\Repf(\A_V)$ (with trivial twist by the conformal spin and statistics theorem \cite{GL96} or by Proposition \ref{prop:trivial_twist}), see \cite[Theorem 4.9]{LR95} and \cite[Lemma 15]{Lon03}, see also \cite[Section 5.2.1]{BKLR15}. Therefore, the desired one-to-one correspondence is obtained from the equivalence between $\Repu(V)$ and $\Repf(\A_V)$ as unitary modular tensor categories, see Proposition \ref{prop:1-1_equiv_algebras_under_categ_equiv} and Theorem \ref{theo:algebras_equiv_as_Qsystems}.

By Theorem \ref{theo:unitarity_voa_extensions}, $U$ is a simple unitary CFT type VOA extension of $V$, which is also completely unitary. Then, the final part of the theorem follows from the fact that $\Repu(U)$ and $\Repf(\A_V^U)$ are both equivalent to $\Mod^{\opu,0}_\mathcal{C}(A)$, respectively by \cite[Theorem 3.30]{Gui21c} and by \cite[Proposition 6.4]{BKL15}, cf.\ also \cite[Theorem 3.1]{Mue10a}, \cite[Proposition 5.1]{Bis16} and \cite[Main Theorem C]{Gui21b}. Recall that $\Mod^{\opu,0}_\mathcal{C}(A)$ (Definition \ref{def:Modu0(A)}) is the category of abstract unitary local (left) modules in $\mathcal{C} := \Repu(V) \cong \Repf(\A_V)$, over the haploid Q-system $A$, which describes the extensions $V\subset U$ and $\A_V \subset \A_V^U$. Note that $\Mod^{\opu,0}_\mathcal{C}(A)$ is denoted by $\Mod^0(A)$ in \cite{BKL15} and by $\Repu(A)$ in \cite{Gui21c}.
\end{proof}

We shall apply Theorem \ref{theo:unitarity_voa_extensions} and Theorem \ref{theo:equiv_VOA_net_extensions} to tensor products and regular cosets of some well-known VOAs listed in Table \ref{table:prop_notorious_VOAs}.
Indeed, it is known that these VOAs meet the desired hypotheses, mostly thanks to the results in \cite{Gui21a} and \cite{Gui20}. 

Before going further, it is useful to recall that for any strongly local regular completely unitary VOA $V$ of CFT type (and thus also simple), a fully faithful *-functor $\mathfrak{F}:\Repu(V)\to \Rep(\A_V)$ can be defined, assuming the \textbf{strong integrability} of $V$-modules, see \cite{CWX} or \cite[Eq. (4.4)]{Gui19II}. It is realized associating to every $V$-module $M$, a representation of $\A_V$ on the Hilbert space completion $\mathcal{H}_M$ of $M$ in a natural way, see \cite{CWX} and \cite[Theorem 4.3]{Gui19II}. Cf.\ \cite{Gui20} and references therein for a discussion of strong integrability in a more general setting and for further details. 

\begin{cor}   \label{cor:extensions_notorious_VOAs}
	Let $V$ be a tensor product of the following VOAs: unitary Virasoro VOAs with $c<1$; unitary affine VOAs associated to simple Lie algebras; discrete series $W$-algebras of type $ADE$; unitary parafermion VOAs; lattice VOAs. Let $U$ be any simple CFT type VOA extension of $V$. Then, $U$ is a simple unitary CFT type VOA extension of $V$, which is also completely unitary. Moreover, $\A_V$ is completely rational and the strong integrability *-functor $\mathfrak{F}:\Repu(V)\to \Repf(\A_V)$ is an equivalence of unitary modular tensor categories. $\mathfrak{F}$ realises a one-to-one correspondence between (isomorphism classes of) simple CFT type VOA extensions $U$ of $V$ and irreducible conformal net extensions $\A_V^U$ of $\A_V$ on the Hilbert space completion $\mathcal{H}_U$ of $U$. Furthermore, $\Repu(U)$ is equivalent to $\Repf(\A_V^U)$ as a unitary modular tensor category.  
\end{cor}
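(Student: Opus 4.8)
The plan is to reduce the statement to the hypotheses of Theorem \ref{theo:equiv_VOA_net_extensions}, and to identify the abstract unitary modular tensor equivalence appearing there with the concrete strong integrability *-functor $\mathfrak{F}$. Since $V$ is a finite tensor product $V = \bigotimes_i V_i$ of the listed VOAs, I would organize the argument in two stages: first verify the required properties for each tensor factor $V_i$, then show that they are stable under tensor products.

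For the first stage, I would collect from the literature the following facts for each building block $V_i$: it is strongly rational and completely unitary; it is strongly local, so that the conformal net $\A_{V_i}$ is defined \cite{CKLW18}; its modules are strongly integrable, so that the *-functor $\mathfrak{F}_i \colon \Repu(V_i) \to \Rep(\A_{V_i})$ is available \cite{CWX}; the net $\A_{V_i}$ is completely rational; and $\mathfrak{F}_i$ is an equivalence of unitary modular tensor categories onto $\Repf(\A_{V_i})$. These are exactly the cases settled in \cite{Gui20,Gui21a} (unitary affine VOAs, lattice VOAs, unitary Virasoro and discrete series $W$-algebras of type $ADE$, parafermion VOAs), together with the complete unitarity results recalled in \cite{Gui19II,Gui21c}.

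For the second stage, I would argue that all of these properties pass to the tensor product. Complete unitarity of $V$ follows since a tensor product of completely unitary VOAs is completely unitary; strong locality of $V$ and the identification $\A_V \cong \bigotimes_i \A_{V_i}$ follow from \cite{CKLW18}; and complete rationality of $\A_V$ follows because a tensor product of completely rational nets is completely rational \cite{KLM01}. On the categorical side, $\Repu(V)$ is the Deligne product $\boxtimes_i \Repu(V_i)$, the category $\Repf(\A_V)$ is $\boxtimes_i \Repf(\A_{V_i})$, and $\mathfrak{F}$ factors as $\boxtimes_i \mathfrak{F}_i$; since each $\mathfrak{F}_i$ is an equivalence of unitary modular tensor categories, so is $\mathfrak{F}$. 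This simultaneously verifies the hypothesis $\Repu(V)\cong\Repf(\A_V)$ of Theorem \ref{theo:equiv_VOA_net_extensions} and shows that the equivalence is implemented by $\mathfrak{F}$ itself.

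With these verifications in hand, $V$ satisfies all hypotheses of Theorem \ref{theo:equiv_VOA_net_extensions}, and applying that theorem yields the one-to-one correspondence between simple CFT type VOA extensions $U$ of $V$ and irreducible conformal net extensions $\A_V^U$ of $\A_V$ on $\mathcal{H}_U$, the complete unitarity of each such $U$, and the equivalence $\Repu(U)\cong\Repf(\A_V^U)$ of unitary modular tensor categories. The main obstacle I anticipate is not the invocation of Theorem \ref{theo:equiv_VOA_net_extensions}, but the careful bookkeeping required to confirm that strong integrability holds for \emph{all} modules of each $V_i$ and that $\mathfrak{F}$ is genuinely multiplicative under tensor products, so that the deep representation-theoretic results of Gui indeed assemble into an equivalence for the tensor product $V$.
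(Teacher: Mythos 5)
Your overall skeleton coincides with the paper's: show that $V$ is of CFT type, regular and completely unitary, apply Theorem \ref{theo:unitarity_voa_extensions} to get that $U$ is a simple unitary CFT type extension which is completely unitary, note strong locality of $V$, establish complete rationality of $\A_V$ together with the equivalence $\Repu(V)\simeq\Repf(\A_V)$ implemented by $\mathfrak{F}$, and then invoke Theorem \ref{theo:equiv_VOA_net_extensions}. The divergence is in how the net-side hypotheses are verified for the tensor product. The paper does \emph{not} assemble them from the single factors: it quotes \cite[Theorem 5.1]{Gui21a} and \cite[Theorem I]{Gui20}, whose hypotheses (CFT type, regularity, complete unitarity, plus Gui's energy-bounds conditions, all of which are shown in those references and in \cite[Proposition 3.31]{Gui21c}, \cite[Theorem 3.3]{DLM97}, \cite{CKLW18} to be stable under tensor products) hold for the tensor product VOA $V$ itself, exactly as recorded in Table \ref{table:prop_notorious_VOAs}. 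Thus complete rationality of $\A_V$ and the fact that $\mathfrak{F}$ is an equivalence of unitary modular tensor categories are obtained for $V$ in one stroke, with no Deligne-product argument at all.

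Your route instead rests on three factorization claims that you describe as bookkeeping but which are substantive: (i) $\Repu\bigl(\bigotimes_i V_i\bigr)\simeq\boxtimes_i\Repu(V_i)$ as unitary modular tensor categories; (ii) $\Repf\bigl(\bigotimes_i\A_{V_i}\bigr)\simeq\boxtimes_i\Repf(\A_{V_i})$; and (iii) $\mathfrak{F}\simeq\boxtimes_i\mathfrak{F}_i$ under these identifications. Claim (ii) is available for completely rational nets from \cite{KLM01}, but claim (i) is not a formality: while irreducible modules of a tensor product of regular VOAs factor by \cite{FHL93}, upgrading this to a \emph{braided tensor} equivalence — let alone a \emph{unitary} one compatible with Gui's invariant sesquilinear forms — is a theorem that you neither prove nor cite, and it is not available off the shelf in the references this paper uses; similarly (iii) requires an argument that tensor products of strongly integrable modules are strongly integrable and integrate to the tensor product representation. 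So, as written, your proof has a genuine gap at precisely the step you flag as routine. The gap is repaired not by completing the assembly but by abandoning it: Gui's theorems apply verbatim to the tensor product $V$, which is what the paper's proof does.
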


\begin{proof}
If $V$ is a tensor product of VOAs in the above list, then it is of CFT type, regular and completely unitary, see Table \ref{table:prop_notorious_VOAs}. Then, $U$ is indeed a simple unitary CFT type VOA extension of $V$, which is also completely unitary, thanks to Theorem \ref{theo:unitarity_voa_extensions}. Note that $V$ is also strongly local, see Table \ref{table:prop_notorious_VOAs}. Moreover, $\A_V$ is completely rational and  $\mathfrak{F}:\Repu(V)\to \Repf(\A_V)$ is an equivalence of unitary modular tensor categories by \cite[Theorem 5.1]{Gui21a} and \cite[Theorem I]{Gui20}. Therefore, the last part follows from Theorem \ref{theo:equiv_VOA_net_extensions}.
\end{proof}

\begin{table}  \caption{Properties of well-known classes of VOAs.} \label{table:prop_notorious_VOAs} 
	
	\begin{center}
		\resizebox{\textwidth}{!}{
			\begin{tabular}{|c|c|c|c|c|c|}
				\hline
				\rule[-4mm]{0mm}{1cm}
				\textbf{VOAs} & \textbf{Definition} & \textbf{Regularity} & \textbf{Unitarity} & \textbf{Complete Unitarity} & \textbf{Strong Locality}\\
				\hline
				\rule[-8mm]{0mm}{2cm}
				\parbox{\widthof{\textit{Virasoro VOAs}}}{\centering \textit{Unitary \\ Virasoro VOAs \\ with} $c<1$} & \parbox{\widthof{[Kac01, Ex. 4.10]}}{\cite[Sec.\ 6.1]{LL04} \\ cf.\ also:\\ \cite[Ex.\ 4.10]{Kac01}}  & \cite[Thm.\ 3.13]{DLM97}  & \cite[Thm.\ 4.2]{DL14}& \cite[Thm.\ 8.1]{Gui19II} & \cite[Ex.\ 8.4]{CKLW18} \\
				\hline
				\rule[-16mm]{0mm}{3.5cm}
				\parbox{\widthof{\textit{Positive integer}}}{\centering \textit{Positive integer\\ level \\ simple affine VOAs associated to simple Lie algebras}}  & \parbox{\widthof{[Kac01, Sec.s 5.6--5.7]}}{\cite[Sec 6.2]{LL04}\\ cf.\ also:\\ \cite[Sec.s 5.6--5.7]{Kac01}} & \cite[Thm.\ 3.7]{DLM97} & \cite[Thm.\ 4.7]{DL14} & \parbox{\widthof{[Gui19II, Thrm 8.4]}}{\cite[Thm.\ 2.7.3]{Gui20}\\ cf.\ also: \\ \cite[Thm.\ 8.4]{Gui19II} \\ \cite[Thm.\ 6.1]{Gui19a} \\ \cite[Thm.\ 5.5]{Ten19a}} & \cite[Ex.\ 8.7]{CKLW18} \\
				\hline
				\rule[-14mm]{0mm}{3cm}
				\parbox{\widthof{\textit{Positive definite}}}{\centering \textit{Positive definite \\ even lattice VOAs}}  & \parbox{\widthof{[FLM88, Thrm 8.10.2]}}{\cite[Sec.\ 6.4]{LL04} \\ cf.\ also:\\ \cite[Sec.\ 5.5]{Kac01} \\ \cite[Thm.\ 8.10.2]{FLM88}} & \cite[Thm.\ 3.16]{DLM97} & \cite[Thm.\ 4.12]{DL14} & \cite[Thm.\ 5.8]{Gui21a} & \parbox{\widthof{[Gui21a, Thrm 5.8]}}{\cite[Thm.\ 5.8]{Gui21a}\\ cf.\ also: \\ \cite[Ex.\ 8.8]{CKLW18}}\\
				\hline 
				\rule[-8mm]{0mm}{2cm}
				\parbox{\widthof{\textit{Discrete series}}}{\centering \textit{Discrete series \\ $W$-algebras \\ of $ADE$ type}} & \parbox{\widthof{[ACL19, Thrm 12.1]}}{\cite[Thm.\ 12.1]{ACL19} \\ cf.\ also:\\ \cite{KRW03}} & \parbox{\widthof{[Ara15a, Main Thrm]}}{\cite[Thm.\ 5.21]{Ara15a} \\ \cite[Main Thm.]{Ara15b}} & \parbox{\widthof{[ACL19, Thrm 12.1]}}{\cite[Thm.\ 12.1]{ACL19} \\ \centerline{+} \\ \cite[Cor.\ 2.8]{DL14}} & \parbox{\widthof{[Gui19II, Thrm 8.4]}}{\cite[Thm.\ 2.7.3]{Gui20}\\ cf.\ also: \\ \cite[Thm.\ 5.5]{Ten19a}} & \parbox{\widthof{[CKLW18, Prop. 7.8]}}{\cite[Thm.\ 12.1]{ACL19} \\ \centerline{+} \\ \cite[Prop.\ 7.8]{CKLW18}} \\
				\hline
				\rule[-8mm]{0mm}{2cm}
				\parbox{\widthof{\textit{Parafermion VOAs}}}{\centering \textit{Positive integer \\ level \\ parafermion VOAs}} & \cite{DW11} & \parbox{\widthof{[ALY14, Thrm 10.5]}}{\cite[Thm.\ 10.5]{ALY14} \\ \cite[Thm.\ 5.1]{DR17}} & \parbox{\widthof{[DW11b, Prop. 4.1]}}{\cite[Prop.\ 4.1]{DW11} \\ \centerline{+} \\ \cite[Cor.\ 2.8]{DL14}} & \cite[Thm.\ 2.7.6]{Gui20} &  \parbox{\widthof{[CKLW18, Prop. 7.8]}}{\cite[Prop.\ 4.1]{DW11} \\ \centerline{+} \\ \cite[Prop.\ 7.8]{CKLW18}} \\
				\hline
				\rule[-6mm]{0mm}{1.5cm}
				\parbox{\widthof{\textit{Tensor product}}}{\centering \textit{Tensor product\\ of VOAs}} & \parbox{\widthof{[FHL93, Sec. 2.5]}}{\cite[Sec.\ 2.5]{FHL93} \\ \cite[Sec.\ 4.3]{Kac01}} & \cite[Thm.\ 3.3]{DLM97} & \cite[Prop.\ 2.9]{DL14} & \cite[Prop.\ 3.31]{Gui21c} & \cite[Cor.\ 8.2]{CKLW18} \\
				\hline
				\rule[-11mm]{0mm}{2.5cm}
				\parbox{\widthof{\textit{(Regular) cosets}}}{\centering \textit{(Regular) cosets}} & \cite[Rem.\ 4.6b(I)]{Kac01} & \textit{(Assumed)} & \cite[Cor.\ 2.8]{DL14} & \parbox{\widthof{\cite[Theorem 2.4.1]{Gui20}}}{\cite[Thm.\ 2.6.6]{Gui20}\\ \centering{+} \\ \cite[Thm.\ 2.4.1]{Gui20}\\   \textit{(Sufficient conditions)}}  & \cite[Prop.\ 7.8]{CKLW18} \\
				\hline
			\end{tabular}
		}
	\end{center}
	\vspace{0.2cm}
	\parbox{14.5cm}{\footnotesize{Table \ref{table:prop_notorious_VOAs} gives the bibliographical references for the well-known classes of VOAs listed in the first column concerning the properties listed in the first row. Actually, the last two rows give references for two general constructions, i.e., tensor products and cosets, which can be performed within VOAs. In particular, for the coset construction, the regularity is assumed, but note that it is only used to derive the property of complete unitarity in \cite[Theorem 2.6.6]{Gui20} under some other sufficient conditions. In some cases, more than one reference is given after the text string ``cf.\ also:''. The symbol ``$+$'' means that the property ``X'' for the VOA ``Y'' is obtained by a combined action of two results. For example, the strong locality for parafermion VOAs is obtained considering the fact that these models can be constructed as coset VOAs from VOAs whose strong locality is already known.}}
\end{table}

In \cite{Hen17}, Henriques constructed VOAs  $V_{G_k}$ associated to any pair $(G,k)$ with $G$ a compact connected Lie group and  $k\in H^4_+(BG,\mathbb{Z})$.  
These VOAs where obtained as simple current extensions by finite abelian groups of tensor product of unitary affine VOAs associated to simple Lie algebras and lattice VOAs. Hence, from Corollary \ref{cor:extensions_notorious_VOAs} we get the following: 

\begin{cor}\label{cor:WZW} The WZW VOA $V_{G_k}$ is unitary for every compact connected Lie group $G$ and every $k\in H^4_+(BG,\mathbb{Z})$. 
\end{cor}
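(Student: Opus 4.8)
The plan is to recognize $V_{G_k}$ as a simple CFT type VOA extension of a tensor product of VOAs already covered by Corollary~\ref{cor:extensions_notorious_VOAs}, and then to invoke that corollary directly. First I would recall from \cite{Hen17} that $V_{G_k}$ is constructed as a simple current extension, by a finite abelian group, of a VOA $V$ that is a tensor product of unitary affine VOAs associated to simple Lie algebras and of lattice VOAs. Since each tensor factor appears in the list of Corollary~\ref{cor:extensions_notorious_VOAs}, the VOA $V$ is of CFT type, regular, and completely unitary, so that all the hypotheses needed to apply that corollary to extensions of $V$ are in place.

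Next I would verify that $V_{G_k}$ genuinely qualifies as a simple CFT type VOA extension of $V$ in the sense of Definition~\ref{def:VOA extension}. A simple current extension does not alter the conformal vector, so $V$ is a vertex subalgebra of $V_{G_k}$ with the same conformal vector. The extension is of CFT type: $V$ itself is of CFT type, the extending simple current modules carry positive lowest conformal weight (hence they do not contribute to the degree-zero part), and Henriques' construction produces an honest $\Zpluseq$-graded VOA with one-dimensional degree-zero space $(V_{G_k})_0 = \C\Omega$. Finally, $V_{G_k}$ is simple, as is standard for simple current extensions by a finite abelian group of pairwise inequivalent simple currents, and as is guaranteed by the construction in \cite{Hen17}.

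Having identified $V_{G_k}$ with a simple CFT type VOA extension of $V$, Corollary~\ref{cor:extensions_notorious_VOAs} immediately yields that $V_{G_k}$ is a simple unitary CFT type extension of $V$, in fact completely unitary; in particular it is unitary, which is the assertion. The only genuine verification in this argument is the identification step, namely that the simple current extension is simple and of CFT type; once that is granted, the conclusion is automatic from the machinery developed around Theorem~\ref{theo:haploid_rigid_iff_Q-system} and Theorem~\ref{theo:unitarity_voa_extensions}, so I expect no serious obstacle beyond this bookkeeping.
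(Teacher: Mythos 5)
Your proposal is correct and follows essentially the same route as the paper: the paper's (implicit) proof is precisely to recall from \cite{Hen17} that $V_{G_k}$ is a simple current extension by a finite abelian group of a tensor product of unitary affine VOAs associated to simple Lie algebras and lattice VOAs, and then to invoke Corollary~\ref{cor:extensions_notorious_VOAs}. Your extra bookkeeping—checking that this extension is simple, of CFT type, and shares the conformal vector, so that Definition~\ref{def:VOA extension} applies—only makes explicit what the paper leaves implicit, and is correct.
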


In \cite{AL17b}, Ai and Lin classified the simple CFT type VOA extensions of the simple affine VOAs $L(\mathfrak{sl}_3,k)$ associated to $\mathfrak{sl}_3$ at positive integer level $k$. 
Up to isomorphism, these extensions are given by:   $L(\mathfrak{sl}_3,k)$ for any positive integer $k$, a $\mathbb{Z}_3$-simple current extension of $L(\mathfrak{sl}_3,k)$
for all $k\equiv 0\; (\mathrm{mod} 3)$,  the conformal inclusions $L(\mathfrak{sl}_3,5) \subset L(\mathfrak{sl}_6,1)$, $L(\mathfrak{sl}_3,9) \subset L(E_6,1)$
and $L(\mathfrak{sl}_3,21) \subset L(E_8,1)$. From Corollary \ref{cor:extensions_notorious_VOAs} we get the following:

\begin{cor} \label{cor:sl3} The irreducible extensions of the loop group  conformal nets $\mathcal{A}_{L(\mathfrak{sl}_3,k)}$ are given by: $\mathcal{A}_{L(\mathfrak{sl}_3,k)}$ for any positive integer $k$, a $\mathbb{Z}_3$-simple current extension of $\A_{L(\mathfrak{sl}_3,k)}$ for all $k\equiv 0\; (\mathrm{mod} \, 3)$,  the conformal inclusions $\A_{L(\mathfrak{sl}_3,5)} \subset \A_{L(\mathfrak{sl}_6,1)}$, 
$\A_{L(\mathfrak{sl}_3,9)} \subset \A_{L(E_6,1)}$ and $\A_{L(\mathfrak{sl}_3,21)} \subset \A_{L(E_8,1)}$.
\end{cor}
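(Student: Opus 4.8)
The plan is to apply Corollary \ref{cor:extensions_notorious_VOAs} with $V := L(\mathfrak{sl}_3,k)$. Since $V$ is a unitary affine VOA associated to the simple Lie algebra $\mathfrak{sl}_3$, it belongs to the list of VOAs covered by that corollary. Hence $\A_{L(\mathfrak{sl}_3,k)}$ is completely rational, the strong integrability *-functor $\mathfrak{F}\colon\Repu(V)\to\Repf(\A_V)$ is an equivalence of unitary modular tensor categories, and there is a one-to-one correspondence $U\mapsto\A_V^U$ between isomorphism classes of simple CFT type VOA extensions $U$ of $V$ and isomorphism classes of irreducible conformal net extensions of $\A_{L(\mathfrak{sl}_3,k)}$, with $\A_V^U$ living on the Hilbert space completion $\mathcal{H}_U$ of $U$.

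Next I would feed the classification of VOA extensions of $L(\mathfrak{sl}_3,k)$ from \cite{AL17b}, recalled just above, through this bijection. Up to isomorphism the VOA extensions are the trivial one $L(\mathfrak{sl}_3,k)$, a $\mathbb{Z}_3$-simple current extension for $k\equiv 0\ (\mathrm{mod}\ 3)$, and the three conformal inclusions $L(\mathfrak{sl}_3,5)\subset L(\mathfrak{sl}_6,1)$, $L(\mathfrak{sl}_3,9)\subset L(E_6,1)$ and $L(\mathfrak{sl}_3,21)\subset L(E_8,1)$. Injectivity and surjectivity of $U\mapsto\A_V^U$ then show that the irreducible conformal net extensions of $\A_{L(\mathfrak{sl}_3,k)}$ are exactly the five corresponding families, and it only remains to identify each net $\A_V^U$ explicitly.

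The trivial extension corresponds to $\A_{L(\mathfrak{sl}_3,k)}$ itself. For the $\mathbb{Z}_3$-simple current case, I would argue that, being an equivalence of unitary modular tensor categories, $\mathfrak{F}$ sends the $\mathbb{Z}_3$-graded haploid commutative Q-system describing the VOA extension in $\Repu(V)$ to a $\mathbb{Z}_3$-graded haploid commutative Q-system in $\Repf(\A_V)$, since $\mathfrak{F}$ preserves invertible objects and the group structure of simple currents (cf.\ Remark \ref{rem:twistinverible}); hence $\A_V^U$ is precisely a $\mathbb{Z}_3$-simple current extension of $\A_{L(\mathfrak{sl}_3,k)}$. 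The delicate step, which I expect to be the \emph{main obstacle}, is the identification of the nets arising from the conformal inclusions with the loop group nets $\A_{L(\mathfrak{sl}_6,1)}$, $\A_{L(E_6,1)}$ and $\A_{L(E_8,1)}$.

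For this last point, let $U$ denote the extension VOA (for instance $L(\mathfrak{sl}_6,1)$); it is again a unitary affine VOA, hence strongly local, so the loop group net $\A_U$ is defined on $\mathcal{H}_U$. Since the inclusion $V\subset U$ is conformal, $V$ is a unitary subalgebra of $U$ with the same conformal vector, and by the theory of unitary subalgebras and covariant subnets of \cite[Section 5.4 and Section 3.4]{CKLW18} the net $\A_V=\A_{L(\mathfrak{sl}_3,k)}$ sits in $\A_U$ as a covariant subnet with the same Virasoro subnet; thus $\A_U$ is an irreducible conformal net extension of $\A_V$ on $\mathcal{H}_U$. The compatibility of the correspondence $U\mapsto\A_V^U$ with the VOA--net correspondence, together with the uniqueness built into the bijection of Corollary \ref{cor:extensions_notorious_VOAs}, then forces $\A_V^U=\A_U$, giving the claimed identification. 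The distinctness and non-triviality of the three conformal inclusions, and the absence of double counting, follow from injectivity of the correspondence and from the classification in \cite{AL17b}.
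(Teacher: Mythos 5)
Your proposal is correct and follows essentially the same route as the paper: there the corollary is stated as an immediate consequence of Corollary \ref{cor:extensions_notorious_VOAs} applied to the Ai--Lin classification \cite{AL17b}, which is exactly your argument. Your additional discussion of why the nets coming from the conformal inclusions are the loop group nets $\A_{L(\mathfrak{sl}_6,1)}$, $\A_{L(E_6,1)}$, $\A_{L(E_8,1)}$ (via strong locality of the extension VOAs, the covariant subnet structure, and the uniqueness in the bijection) fills in details that the paper leaves implicit.
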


By \cite[Definition 2.1 and Lemma 2.2]{DGH98}, a \textbf{framed VOA} with $c=\frac{n}{2}$ with $n\in\Zplus$, is a simple CFT type VOA extension of a tensor product of $n$ copies of the Virasoro VOA with central  charge $\frac{1}{2}$. Similarly, a \textbf{framed conformal net} \cite[Definition 4.1]{KL06} with $c=\frac{n}{2}$ is an irreducible conformal net extension of a tensor product of $n$ copies of the Virasoro conformal net with central  charge $\frac{1}{2}$. Consequently, for framed VOAs and framed conformal nets, Corollary \ref{cor:extensions_notorious_VOAs} implies the following:

\begin{cor}  \label{cor:framed_chiral_CFTs}
	Let $n\in\Zplus$. Then, there is a one-to-one correspondence between framed VOAs $V$ with central  charge $c=\frac{n}{2}$ and framed conformal nets $\A$ with the same central charge $c$. Furthermore, $V$ is completely unitary, $\A$ is completely rational, and $\Repu(V)$ is equivalent to $\Repf(\A)$ as a unitary modular tensor category. 
\end{cor}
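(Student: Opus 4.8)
The plan is to deduce the statement as a direct specialization of Corollary \ref{cor:extensions_notorious_VOAs}. Let $\mathcal{L}$ denote the simple unitary Virasoro VOA of central charge $\frac{1}{2}$, and set $F:=\mathcal{L}^{\otimes n}$, a VOA of central charge $\frac{n}{2}$. Since $\frac{1}{2}<1$, each tensor factor is a unitary Virasoro VOA with $c<1$, so $F$ is a tensor product of VOAs of exactly the kind appearing in the list of Corollary \ref{cor:extensions_notorious_VOAs}. By the definition recalled above (following \cite{DGH98}), the framed VOAs $V$ of central charge $c=\frac{n}{2}$ are precisely the simple CFT type VOA extensions of $F$. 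Hence, applying Corollary \ref{cor:extensions_notorious_VOAs} with base $F$ (in the role of the ``$V$'' there) and extension $V$ (in the role of the ``$U$'' there), I immediately obtain that every framed VOA $V$ is completely unitary, that $\A_F$ is completely rational, and that the strong integrability $*$-functor $\mathfrak{F}:\Repu(F)\to\Repf(\A_F)$ is an equivalence of unitary modular tensor categories.

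The only step that is not purely formal is to identify $\A_F$ with the net underlying the notion of a framed conformal net. For this I would use that $\mathcal{L}$ is strongly local with associated conformal net $\A_{\mathcal{L}}$ equal to the $c=\frac{1}{2}$ Virasoro conformal net, together with the compatibility of the VOA-to-net correspondence of \cite{CKLW18} with tensor products. These two facts give $\A_F\cong\A_{\mathcal{L}}^{\otimes n}$, which is exactly the $n$-fold tensor product of $c=\frac{1}{2}$ Virasoro nets appearing in the definition of a framed conformal net (following \cite{KL06}). Consequently, the framed conformal nets of central charge $\frac{n}{2}$ are precisely the irreducible conformal net extensions of $\A_F$.

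With these identifications in place, the remaining assertions follow at once. By Corollary \ref{cor:extensions_notorious_VOAs}, the assignment $V\mapsto\A:=\A_F^V$, realized on the Hilbert space completion $\mathcal{H}_V$ of $V$, is a bijection between isomorphism classes of simple CFT type extensions of $F$ and isomorphism classes of irreducible conformal net extensions of $\A_F$; by the previous two paragraphs this is exactly a one-to-one correspondence between framed VOAs and framed conformal nets of central charge $\frac{n}{2}$. Each such $V$ is completely unitary and, since $\A$ is a finite-index irreducible extension of the completely rational net $\A_F$, it is completely rational as well; finally $\Repu(V)\cong\Repf(\A)$ as unitary modular tensor categories is furnished by the last statement of Corollary \ref{cor:extensions_notorious_VOAs}. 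The main obstacle is therefore concentrated entirely in the tensor-product compatibility $\A_F\cong\A_{\mathcal{L}}^{\otimes n}$ of the second paragraph; granting this standard fact about the correspondence of \cite{CKLW18}, the whole statement is a formal consequence of Corollary \ref{cor:extensions_notorious_VOAs}.
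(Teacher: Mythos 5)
Your proof is correct and follows essentially the same route as the paper, which likewise obtains the corollary by applying Corollary \ref{cor:extensions_notorious_VOAs} to the tensor product of $n$ copies of the $c=\frac{1}{2}$ unitary Virasoro VOA, using the definitions of framed VOA (\cite{DGH98}) and framed conformal net (\cite{KL06}) as extensions of the respective tensor products. The only difference is that you spell out two points the paper leaves implicit, namely the identification $\A_F\cong\A_{\mathcal{L}}^{\otimes n}$ via the tensor-product compatibility of the correspondence of \cite{CKLW18}, and the complete rationality of the extension net as a finite-index irreducible extension of a completely rational net; both are standard and correctly invoked.
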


Note that the discrete series $W$-algebras, as well as the parafermion VOAs, can be obtained through a regular coset construction of affine and lattice VOAs, see Table \ref{table:prop_notorious_VOAs}. Therefore, for these models, the results stated in Corollary \ref{cor:extensions_notorious_VOAs} are just an application of the following more general statement:

\begin{cor}  \label{cor:extensions_coset}
	Let $V$ and $U$ be tensor products of the following VOAs: unitary Virasoro VOAs with $c<1$; unitary affine VOAs associated to simple Lie algebras; discrete series $W$-algebras of type $ADE$; unitary parafermion VOAs; lattice VOAs. Suppose that $V$ is a unitary subalgebra of $U$ and that the coset VOA  $V^c$ of $V$ in $U$ is regular. Let $\widetilde{U}$ be any simple CFT type VOA extension of $V^c$. Then, $\widetilde{U}$ is a simple unitary CFT type VOA extension of $V^c$, which is also completely unitary. Moreover, the coset net $\A_V^c$ (which is equal to $\A_{V^c}$ by \cite[Proposition 7.8]{CKLW18}) is completely rational and the strong integrability *-functor $\mathfrak{F}:\Repu(V^c)\to \Repf(\A_V^c)$ is an equivalence of unitary modular tensor categories. $\mathfrak{F}$ realises a one-to-one correspondence between (isomorphism classes of) simple CFT type VOA extensions $\widetilde{U}$ of $V^c$ and irreducible conformal net extensions $\A_{V^c}^{\widetilde{U}}$ of $\A_V^c$ on the Hilbert space completion $\mathcal{H}_{\widetilde{U}}$ of $\widetilde{U}$.  Furthermore, $\Repu(\widetilde{U})$ is equivalent to $\Repf(\A_{V^c}^{\widetilde{U}})$ as a unitary modular tensor category.  
\end{cor}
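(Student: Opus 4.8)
The plan is to reduce everything to Theorem \ref{theo:equiv_VOA_net_extensions} applied to the extension $V^c \subset \widetilde{U}$, exactly as Corollary \ref{cor:extensions_notorious_VOAs} reduces to it for the extension $V \subset U$. The only genuinely new work is to verify that the coset $V^c$ satisfies all the hypotheses of that theorem: namely that $V^c$ is completely unitary, of CFT type, strongly local, with $\A_{V^c}$ completely rational, and that $\Repu(V^c)$ is equivalent to $\Repf(\A_{V^c})$ as a unitary modular tensor category. Once these are established, the conclusions about $\widetilde{U}$ (simple unitary CFT type extension, complete unitarity, the correspondence (i)--(iii), and $\Repu(\widetilde{U}) \cong \Repf(\A_{V^c}^{\widetilde{U}})$) follow immediately.

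First I would record that, by Table \ref{table:prop_notorious_VOAs}, the tensor products $V$ and $U$ of VOAs from the given list are of CFT type, regular, completely unitary and strongly local. Since $V$ is assumed to be a unitary subalgebra of $U$, its commutant $V^c$ is again a unitary subalgebra of $U$; being a unitary subalgebra of the strongly local VOA $U$, it is itself strongly local by \cite[Section 5.4 and Section 6]{CKLW18}. It is of CFT type and, by hypothesis, regular. The decisive point is then the complete unitarity of $V^c$, together with the statement that the strong integrability *-functor $\mathfrak{F}:\Repu(V^c)\to\Repf(\A_{V^c})$ is an equivalence of unitary modular tensor categories. I would obtain both from \cite[Theorem I]{Gui20} and \cite[Theorem 5.1]{Gui21a}, which are precisely tailored to coset constructions inside the listed VOAs and are the same inputs used in the proof of Corollary \ref{cor:extensions_notorious_VOAs}.

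On the operator-algebraic side, I would identify $\A_{V^c}$ with the coset net $\A_V^c$ (the commutant of $\A_V$ inside $\A_U$) via \cite[Proposition 7.8]{CKLW18}. Complete rationality of $\A_{V^c}$ would then follow from the complete rationality of $\A_U$ and $\A_V$ together with the behaviour of the $\mu$-index under the coset/inclusion construction, see \cite[Theorem 4.9 and Theorem 5.3]{LX04}. This closes the verification that $V^c$ meets all the hypotheses of Theorem \ref{theo:equiv_VOA_net_extensions}.

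Finally, I would apply Theorem \ref{theo:equiv_VOA_net_extensions} directly to $V^c \subset \widetilde{U}$: it yields that $\widetilde{U}$ is a simple unitary CFT type extension of $V^c$ and completely unitary, that $\mathfrak{F}$ transports the correspondence between simple CFT type extensions of $V^c$ and irreducible conformal net extensions $\A_{V^c}^{\widetilde{U}}$ of $\A_{V^c}$, and that $\Repu(\widetilde{U})$ is equivalent to $\Repf(\A_{V^c}^{\widetilde{U}})$ as a unitary modular tensor category. I expect the main obstacle to be the second step, i.e.\ establishing the complete unitarity of the coset and the equivalence $\Repu(V^c)\cong\Repf(\A_{V^c})$, since these do not follow formally from the ambient data on $V$ and $U$ but rely on Gui's strong integrability machinery for cosets; everything else is either bookkeeping or a direct appeal to Theorem \ref{theo:equiv_VOA_net_extensions}.
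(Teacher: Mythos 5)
Your overall strategy coincides with the paper's: verify that the coset $V^c$ satisfies the hypotheses of Theorem \ref{theo:equiv_VOA_net_extensions} (CFT type, regular, completely unitary, strongly local, $\A_{V^c}$ completely rational, $\Repu(V^c)\cong\Repf(\A_{V^c})$ as unitary modular tensor categories), then apply that theorem to $V^c\subset\widetilde{U}$. The paper does exactly this, citing the last row of Table \ref{table:prop_notorious_VOAs} for CFT type and complete unitarity of the regular coset, \cite[Proposition 7.8]{CKLW18} for strong locality and the identification $\A_{V^c}=\A_V^c$, \cite[Theorem I]{Gui20} for complete rationality together with the unitary modular tensor equivalence, and then Theorem \ref{theo:unitarity_voa_extensions} and Theorem \ref{theo:equiv_VOA_net_extensions} for all conclusions about $\widetilde{U}$.

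The one place where you deviate from the paper is also the one place where your argument has a gap: the complete rationality of $\A_{V^c}=\A_V^c$. You propose to deduce it from the complete rationality of $\A_U$ and $\A_V$ together with ``the behaviour of the $\mu$-index under the coset/inclusion construction'', citing \cite[Theorem 4.9 and Theorem 5.3]{LX04}. Those results characterize complete rationality (finite $\mu$-index; finitely many irreducible representations, all of finite index), but they do not by themselves control a coset: to run an index/$\mu$-index argument one needs to know that the inclusion $\A_V\vee\A_V^c\subset\A_U$ has finite index, and this is \emph{not} a formal consequence of the complete rationality of $\A_V$ and $\A_U$. On the VOA side, finiteness comes from the assumed regularity of $V^c$, which makes $U$ a finite direct sum of irreducible $V\otimes V^c$-modules; but transporting that decomposition to the conformal net side is precisely the nontrivial content of Gui's strong integrability machinery, not of \cite{LX04}. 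The paper sidesteps this issue entirely because \cite[Theorem I]{Gui20}, which you already invoke for the equivalence $\Repu(V^c)\cong\Repf(\A_V^c)$, contains the statement that the coset net is completely rational. So your proof becomes correct once you attribute complete rationality to that same theorem rather than to \cite{LX04}; as written, that step does not go through.
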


\begin{proof}
The coset VOA $V^c$ is of CFT type and completely unitary by \cite[Theorem I]{Gui20}, see also the last row of Table \ref{table:prop_notorious_VOAs}. Then, $\widetilde{U}$ is indeed a simple unitary CFT type VOA extension of $V^c$, which is also completely unitary, thanks to Theorem \ref{theo:unitarity_voa_extensions}. Note that $V$, $U$ and $V^c$ are also strongly local, see Table \ref{table:prop_notorious_VOAs}. Furthermore, $\A_{V^c}=\A_V^c$ is completely rational and  $\mathfrak{F}:\Repu(V^c)\to \Repf(\A_V^c)$ is an equivalence of unitary modular tensor categories by \cite[Theorem I]{Gui20}. Then, the last part follows from Theorem \ref{theo:equiv_VOA_net_extensions}.
\end{proof}

\subsection{Applications to VOSAs and graded-local conformal nets}

Now, we draw some consequences of Theorem \ref{theo:unitarity_voa_extensions} and Theorem \ref{theo:equiv_VOA_net_extensions} in the VOSA context. 

The definition of vertex subalgebra in the VOSA setting  does essentially not differ from the one in the VOA one, see e.g.\ \cite[Section 4.3]{Kac01}. For the definition of  unitary (vertex) subalgebra, see e.g.\ \cite{CGH} and \cite[Section 2.6]{Gau21}. 
In this paper, we will only consider VOSAs with \emph{correct statistics}, cf.\ \cite{CKL19}. Hence, we always assume that  the 
$\frac12\mathbb{Z}$-grading of a VOSA $V = V_\parzero \oplus V_\parone$ satisfies 
$$V_\parzero = \bigoplus_{n \in \mathbb{Z}}V_n \quad \textrm{and} \quad V_\parone = \bigoplus_{n \in  \mathbb{Z} + \frac12}V_n \,. $$ 
Hence, $V_\parzero$ is a VOA while $V_\parone$ is a  $V_\parzero$-module with twist $e^{i2\pi L_0}\restriction_{V_\parone} 
= -\one_{V_\parone}$. 

\begin{defin}
	A simple VOSA $U$ of CFT type is called a \textbf{simple CFT type VOSA extension} of a simple VOSA $V$ of CFT type if the latter, namely $V$, is a vertex subalgebra of $U$ with the same conformal vector. Moreover, $U$ is said in addition to be a simple \textbf{unitary} CFT type VOSA extension of $V$ if both VOSAs are unitary and if the unitary structure of $U$ restricts to the one of $V$, i.e., $V$ is a unitary subalgebra of $U$.
\end{defin}

\begin{rem} \label{rem:simplicity_even_part}
It is well-known that if $V$ is a simple VOSA, then its even part $V_\parzero$ is a simple VOA. Indeed, if $\mathscr{I}$ is an ideal of $V_\parzero$, then the linear span $V\cdot \mathscr{I}$ of $\{a_{(n)}b\mid a\in V, b\in \mathscr{I}, n\in\Z \}$ is an ideal of $V$, see e.g.\ \cite[Proposition 4.5.6]{LL04} and \cite[Lemma 6.1.1]{Li94}, cf.\ \cite[Lemma 2.7.1]{Gau21}. For a similar reason, the odd part $V_\parone$ is an irreducible $V_\parzero$-module. Note also that, if $V$ is a VOSA of CFT type, then $V_\parzero$ is of CFT type too.
\end{rem}

From Theorem \ref{theo:unitarity_voa_extensions}, in the VOSA context, we get:

\begin{cor} \label{cor:unitarity_vosa_extensions}
	Let $V$ be a regular completely unitary VOA of CFT type and $U$ be a simple CFT type VOSA extension of $V$. Then, $U$ is a simple unitary CFT type VOSA extension of $V$.
\end{cor}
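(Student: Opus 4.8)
The plan is to reduce the statement to the even part and then invoke Theorem \ref{theo:unitarity_voa_extensions}. Since $V$ is a VOA it is purely even, so $V \subseteq U_\parzero$; by Remark \ref{rem:simplicity_even_part} the even part $U_\parzero$ is a simple VOA, it is of CFT type because $U$ is, and it is a vertex subalgebra of $U$ sharing the conformal vector of $V$. Hence $U_\parzero$ is a simple CFT type VOA extension of $V$, and Theorem \ref{theo:unitarity_voa_extensions} shows that $U_\parzero$ is unitary, in fact completely unitary, with a unitary structure whose restriction to $V$ is the given one. (If $U_\parone = \{0\}$ then $U = U_\parzero$ and we are already done.)

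It then remains to promote this unitary structure on $U_\parzero$ to one on the whole VOSA $U = U_\parzero \oplus U_\parone$ restricting to $V$. Because $U_\parzero$ is completely unitary, hence strongly unitary, the odd part $U_\parone$ — which is an irreducible $U_\parzero$-module by Remark \ref{rem:simplicity_even_part} — is unitarizable, so it carries a positive-definite $U_\parzero$-invariant Hermitian form. Declaring $U_\parzero$ and $U_\parone$ orthogonal and taking the direct sum of the two forms yields a positive-definite Hermitian form $(\cdot|\cdot)$ on $U$ whose restriction to $V$ is the unitary form of $V$; one then extends the PCT operator of $U_\parzero$ to $U_\parone$ using the self-duality $\overline{U_\parone} \cong U_\parone$, which holds because the multiplication $U_\parone \otimes U_\parone \to U_\parzero$ is a non-degenerate pairing by simplicity of $U$.

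The main point, and the step I expect to be the real obstacle, is to check that $(\cdot|\cdot)$ together with this extended PCT operator is invariant under the \emph{full} VOSA structure of $U$; the only new content beyond Theorem \ref{theo:unitarity_voa_extensions} is invariance for the odd vertex operators $Y(a,z)$ with $a \in U_\parone$, which are intertwining operators of type $U_\parone \otimes U_\parzero \to U_\parone$ and $U_\parone \otimes U_\parone \to U_\parzero$. Since $U_\parone$ is irreducible and $U$ is simple the relevant spaces of intertwining operators are one-dimensional, so the adjoint of $Y(a,z)$ agrees with the operator prescribed by invariance up to a single scalar; the crucial and delicate part is to show that this scalar can be made positive, so that positive-definiteness is preserved. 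This is pinned down by composing two odd operators, which lands back in the unitary even part $U_\parzero$ where the form is already fixed, thereby fixing the normalization after a harmless positive rescaling of the form on $U_\parone$, while keeping track of the super signs coming from the half-integer grading. This assembling of even and odd data into a genuine unitary VOSA structure is exactly the unitarity criterion for simple VOSAs of CFT type developed in \cite{AL17a}, cf.\ \cite{CGH} and \cite[Chapter 2]{Gau21}, which one invokes to conclude that $U$ is a simple unitary CFT type VOSA extension of $V$.
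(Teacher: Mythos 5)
Your proposal is correct and follows essentially the same route as the paper: reduce to the even part $U_\parzero$, apply Theorem \ref{theo:unitarity_voa_extensions} to conclude that $U_\parzero$ is a (completely) unitary simple CFT type VOA extension of $V$, deduce that the irreducible $U_\parzero$-module $U_\parone$ is unitarizable, and then invoke the ``super'' unitarity criterion for $\mathbb{Z}_2$-simple current extensions from \cite{CGH}, see \cite[Corollary 2.7.3]{Gau21} (the paper phrases this last step as a super version of \cite[Theorem 3.3]{DL14}, but it is the same result you appeal to). Your intermediate sketch of how that criterion works (orthogonal direct sum of forms, extending the PCT operator, one-dimensionality of the odd intertwining spaces, and fixing the positive normalization) is detail that the paper simply delegates to the cited reference.
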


\begin{proof}
The even part $U_\parzero$ of $U$ is a simple CFT type VOA extension of $V$, whereas the odd part $U_\parone$ is an irreducible 
$U_\parzero$-module, see Remark \ref{rem:simplicity_even_part}. Then, $U_\parzero$ is a simple unitary CFT type VOA extension of $V$ which is also completely unitary by Theorem \ref{theo:unitarity_voa_extensions}. Thus, $U_\parone$  is a unitary $U_\parzero$-module.
In conclusion, the unitarity of $U$, as a simple CFT type VOSA extension of $V$, follows by a (kind of) ``super'' version of \cite[Theorem 3.3]{DL14} as given in \cite{CGH}, see \cite[Corollary 2.7.3]{Gau21}.
\end{proof}

We now recall the notion of graded-local conformal net extension, cf.\  \cite{CKL08,CHKLX15,CHL15}.
For the related notion of covariant subnet of a graded-local conformal net we refer the reader to \cite{CKL08,CHKLX15,CHL15}, see also
\cite{CGH} and \cite[Section 1.3]{Gau21}. 

\begin{defin}
	An \textbf{irreducible graded-local conformal net extension} of an irreducible graded-local conformal net $\A$ is an irreducible graded-local conformal net $\B$, containing $\A$ as covariant subnet and such that the Virasoro subnet of $\B$ coincides with the Virasoro subnet of $\A$. 
\end{defin}

\begin{rem}\label{rem:graded-local extension}
Let  $\A_\parzero$ and $\B_\parzero$ be the even (or Bose) subnets of $\A$ and $\B$ respectively. Assume that $\A$ is a covariant subnet of $\B$. Then $\A_\parzero$ is a covariant subnet of $\B_\parzero$ and $\B$ is an irreducible graded-local conformal net extension of $\A$ if and only if $\B_\parzero$ is an irreducible conformal net extension of $\A_\parzero$. 
\end{rem}

From Theorem \ref{theo:equiv_VOA_net_extensions}, in the VOSA context, we get the following Theorem \ref{theo:equiv_VOSA_net_extensions}. In order to prove it, recall the definition of a $\Z_2$-simple current in a ($C^*$-)tensor category $\mathcal{C}$ (also called self-dual simple current), i.e., an invertible object $J$ in $\mathcal{C}$ not isomorphic to 
$\id_{\mathcal{C}}$ and such that $J \cong \overline{J}$, cf.\ Remark \ref{rem:twistinverible}.

\begin{theo}  \label{theo:equiv_VOSA_net_extensions}
	Let $V$ be a regular completely unitary VOA of CFT type. Suppose that $V$ is strongly local and that the associated irreducible conformal net $\A_V$ is completely rational. If $\Repu(V)$ is equivalent to $\Repf(\A_V)$ as a unitary  modular tensor category, then there is a one-to-one correspondence between the following:
		\begin{itemize}
			\item[(i)] isomorphism classes of simple CFT type VOSA extensions of $V$;
		
			\item[(ii)] isomorphism classes of irreducible graded-local conformal net extensions of $\A_V$.
		\end{itemize}
\end{theo}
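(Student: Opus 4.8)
The plan is to reduce everything to the purely even (bosonic) situation already settled in Theorem \ref{theo:equiv_VOA_net_extensions}, by splitting off the $\Z_2$-grading and encoding it through a self-dual simple current with fermionic statistics. On the VOSA side, a simple CFT type VOSA extension $U$ of $V$ produces, via Remark \ref{rem:simplicity_even_part}, a chain $V \subset U_\parzero \subset U$ in which $U_\parzero$ is a simple CFT type VOA extension of $V$ and the odd part $U_\parone$ is an irreducible $U_\parzero$-module. By the correct-statistics convention, cf.\ \cite{CKL19,CGH,Gau21}, the pair $(U_\parzero, U_\parone)$ realizes $U$ as a $\Z_2$-simple current VOSA extension of $U_\parzero$: the object $U_\parone$ is a self-dual simple current in $\Repu(U_\parzero)$, namely $U_\parone \boxtimes_{U_\parzero} U_\parone \cong U_\parzero$, carrying twist $-1$ (that is, $\omega(U_\parone) = -\one_{U_\parone}$). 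Symmetrically, by Remark \ref{rem:graded-local extension}, an irreducible graded-local conformal net extension $\B$ of $\A_V$ has even part $\B_\parzero$ which is an irreducible (local) conformal net extension of $\A_V$, and $\B$ is recovered from $\B_\parzero$ together with a self-dual DHR simple current of $\B_\parzero$ of statistics phase $-1$, following the Fermi-net formalism of \cite{CKL08,CHKLX15,CHL15}.

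First I would apply Theorem \ref{theo:equiv_VOA_net_extensions} to the even extension $V \subset U_\parzero$; since $V$ satisfies all of its hypotheses, this yields an irreducible conformal net extension $\B_\parzero := \A_V^{U_\parzero}$ of $\A_V$ together with an equivalence of unitary modular tensor categories $\Repu(U_\parzero) \cong \Repf(\B_\parzero)$. Under this equivalence the self-dual simple current $U_\parone$, being an invertible object of twist $-1$, is carried to a self-dual simple current $\tilde J$ of $\B_\parzero$ of twist $-1$, as a unitary modular tensor equivalence preserves fusion rules, dimensions and twists. The current $\tilde J$ then determines an irreducible graded-local conformal net extension $\B$ of $\A_V$ whose even part is $\B_\parzero$, giving the assignment $U \mapsto \B$. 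For the inverse I would run the same steps backwards: pass from $\B$ to $\B_\parzero$, recover via Theorem \ref{theo:equiv_VOA_net_extensions} a simple CFT type VOA extension $U_\parzero$ of $V$ with $\Repf(\B_\parzero) \cong \Repu(U_\parzero)$, transport $\tilde J$ back to a self-dual simple current of twist $-1$ in $\Repu(U_\parzero)$, and use the $\Z_2$-simple current VOSA construction to build $U = U_\parzero \oplus U_\parone$.

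The two assignments are mutually inverse on isomorphism classes because each is the composite of the bijection of Theorem \ref{theo:equiv_VOA_net_extensions} on even parts with the matching of self-dual simple currents of twist $-1$ dictated by the very same modular tensor equivalence. The main obstacle is exactly this matching: one must verify that simple CFT type VOSA extensions of $U_\parzero$ with even part $U_\parzero$ and irreducible graded-local net extensions of $\B_\parzero$ with even part $\B_\parzero$ are each classified by a single categorical datum, a self-dual simple current of twist $-1$ (equivalently, a commutative Q-system whose underlying object is the tensor unit together with this current, carrying the appropriate fermionic sign), and that this classification is natural under the equivalence $\Repu(U_\parzero) \cong \Repf(\B_\parzero)$. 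Once the fermionic simple current is identified as the common classifying datum on both sides, well-definedness on isomorphism classes and bijectivity follow from Theorem \ref{theo:equiv_VOA_net_extensions}.
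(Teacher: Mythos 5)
Your proposal is correct and follows essentially the same route as the paper's proof: decompose the extension into its even part plus a $\Z_2$-simple current of twist $-1$, apply Theorem \ref{theo:equiv_VOA_net_extensions} to the even part $V \subset U_\parzero$ (whose ``Furthermore'' clause supplies the complete unitarity needed to view $U_\parone$ in $\Repu(U_\parzero)$), transport the fermionic simple current across the unitary modular tensor equivalence $\Repu(U_\parzero)\cong\Repf(\A_V^{U_\parzero})$, and reconstruct on each side via the simple current extension results (\cite{CKL08,CHKLX15} for graded-local nets, \cite{CKL19,DM04b} for VOSAs). The ``matching obstacle'' you isolate is exactly what the paper discharges by these citations, together with Remark \ref{rem:twistinverible} converting the twist $-1$ condition into the braiding condition $b_{J,J}=-\one_{J\otimes J}$ required for the VOSA reconstruction.
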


\begin{proof}
Let $U$ be a simple CFT type VOSA extension	of $V$. The even part $U_\parzero$ of $U$ is simple as CFT type VOA extension of $V$, whereas $U_\parone$ is an irreducible $U_\parzero$-module, see Remark \ref{rem:simplicity_even_part}. By Theorem \ref{theo:unitarity_voa_extensions},
$U_\parzero$ is a simple unitary CFT type VOA extension	of $V$, which is also completely unitary so that $U_\parone$ is a unitary $U_\parzero -$module.
Moreover, there exists an irreducible conformal net $\A_V^{U_\parzero}$, extending $\A_V$ and such that $\Repu(U_\parzero)$ is equivalent to $\Repf(\A_V^{U_\parzero})$ as a unitary modular tensor category.
Now, the odd part $U_\parone$ of $U$ is a $\Z_2$-simple current $J$ in $\Repu(U_\parzero)$, see e.g.\ \cite[Theorem 3.1 and Remark A.2]{CKLR19} with twist $\omega(J)=-\one_J$ because we are assuming the correct statistics for $U$. 

By the equivalence of $\Repu(U_\parzero)$ and $\Repf(\A_V^{U_\parzero})$, $J$ determines a  $\mathbb{Z}_2$-simple current 
$\tilde{J}$ in $\Repf(\A_V^{U_\parzero})$ with twist  $\omega(\tilde{J}) = -\one_{\tilde{J}}$, and hence an irreducible graded-local conformal net extension of $\A_V^{U_\parzero}$, see e.g.\ \cite{CKL08,CHKLX15}. This shows that any simple CFT type VOSA extension of $V$ gives rise to an irreducible graded-local conformal net extension of $\A_V$. 

Conversely, by  Theorem \ref{theo:equiv_VOA_net_extensions}, the even part of any irreducible graded-local conformal net extension of $\A_V$  is of the form $\A_V^{U_\parzero}$ for some simple CFT type VOA extension $U_\parzero$ of $V$. Accordingly, any irreducible graded-local conformal net extension of $\A_V$ 
comes from a  $\mathbb{Z}_2$-simple current $\tilde{J}$ in $\Repf(\A_V^{U_\parzero})$, with twist $\omega(\tilde{J}) = -\one_{\tilde{J}}$,  see again \cite{CKL08,CHKLX15}.  By the equivalence of $\Repu(U_\parzero)$ and $\Repf(\A_V^{U_\parzero})$, $\tilde{J}$ determines a  
$\mathbb{Z}_2$-simple current $J$ in $\Repu(U_\parzero)$ with twist  $\omega(\tilde{J}) = -\one_{\tilde{J}}$.
Thus, the braiding satisfies $b_{J,J} =-\one_{J\otimes J}$ by Remark \ref{rem:twistinverible}, since $\Repu(U_\parzero)$ is a unitary modular tensor category. Then, $U:= U_\parzero \oplus J$ admits a unique structure of simple VOSA compatible with its $U_\parzero$-module structure, see \cite[Theorem 3.9]{CKL19} and 
\cite[Proposition 5.3]{DM04b} and $U$ is CFT type because $J$ is unitary and hence non-negatively graded.
\end{proof}

Following Theorem \ref{theo:equiv_VOSA_net_extensions}, for a simple CFT type VOSA extension $U$ of the VOA $V$, we denote by $\A_V^U$ the corresponding irreducible graded-local conformal net extension of $\A_V$. 
We present our main result in the VOSA context:

\begin{theo}  \label{theo:equiv_VOSA_net_super_extensions}
	Let $V$ be a simple VOSA of CFT type and $\A$ be an irreducible graded-local conformal net. Assume that $V_\parzero$ is completely unitary and strongly local, and that $\A_\parzero$ is completely rational. Assume also that $\A_{V_\parzero}$ is isomorphic to $\A_\parzero$ and that $\Repu(V_\parzero)$ and $\Repf(\A_\parzero)$ are equivalent as unitary  modular tensor categories. 
	Suppose that the equivalence class of the $\Z_2$-simple current defining $V$ in $\Repu(V_\parzero)$ is mapped into the equivalence class of the one defining $\A$ in $\Repf(\A_\parzero)$ by the modular tensor equivalence between $\Repu(V_\parzero)$ and $\Repf(\A_\parzero)$ (see the proof of Theorem \ref{theo:equiv_VOSA_net_extensions}). Then, every simple CFT type VOSA extension $U$ of $V_\parzero$ extends also $V$ if and only if the corresponding irreducible graded-local conformal net extension $\A_{V_\parzero}^U$ of $\A_{V_\parzero}\cong\A_\parzero$ extends $\A$. In other words, there is a one-to-one correspondence between (isomorphism classes of) simple CFT type VOSA extensions of $V$ and irreducible graded-local conformal net extensions of $\A$. 
	\end{theo}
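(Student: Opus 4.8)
The plan is to deduce the claim from Theorem \ref{theo:equiv_VOSA_net_extensions} by restricting the bijection it already provides. Indeed, a simple CFT type VOSA extension of $V$ is the same thing as a simple CFT type VOSA extension $U$ of $V_\parzero$ that in addition contains $V$ (with the same conformal vector), since simplicity and the CFT type property are intrinsic and $V_\parzero\subset V$; likewise, by Remark \ref{rem:graded-local extension}, an irreducible graded-local conformal net extension of $\A$ is the same as such an extension $\B$ of $\A_\parzero\cong\A_{V_\parzero}$ that contains $\A$. By Theorem \ref{theo:equiv_VOSA_net_extensions} applied to the VOA $V_\parzero$, there is a one-to-one correspondence $U\leftrightarrow\B:=\A_{V_\parzero}^U$ between simple CFT type VOSA extensions of $V_\parzero$ and irreducible graded-local conformal net extensions of $\A_{V_\parzero}$. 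It therefore suffices to show that, under this correspondence, $V\subset U$ holds if and only if $\A\subset\B$.

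First I would rephrase both inclusions purely inside $\mathcal{C}:=\Repu(V_\parzero)$ and $\Repf(\A_\parzero)$. Write $J:=[V_\parone]$ and $\tilde J$ for the $\Z_2$-simple currents (of twist $-\one$, by correct statistics, cf.\ Remark \ref{rem:twistinverible}) that define $V$ and $\A$ over their even parts; the hypothesis is precisely that the modular tensor equivalence $\mathfrak{E}\colon\mathcal{C}\to\Repf(\A_\parzero)$ satisfies $\mathfrak{E}([J])=[\tilde J]$. For the given extension $U$, its even part $U_\parzero$ is a simple unitary CFT type VOA extension of $V_\parzero$ (Theorem \ref{theo:unitarity_voa_extensions}, Remark \ref{rem:simplicity_even_part}), and the defining $\Z_2$-simple current of $U$ is $K:=[U_\parone]$, mapped by the correspondence to the current $\tilde K:=[\B_\parone]$ defining $\B$ over $\B_\parzero$. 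Restricting to the even subtheories, the correspondence is built so that $U\!\restriction_{V_\parzero}$ corresponds to $\B\!\restriction_{\A_\parzero}$ under $\mathfrak{E}$ in a grading-preserving way, using that the even-part equivalence $\Repu(U_\parzero)\cong\Repf(\B_\parzero)$ of Theorem \ref{theo:equiv_VOA_net_extensions} is compatible with $\mathfrak{E}$ and with the restriction functors. In particular, $U_\parone\!\restriction_{V_\parzero}\in\mathcal{C}$ is sent by $\mathfrak{E}$ to $\B_\parone\!\restriction_{\A_\parzero}\in\Repf(\A_\parzero)$.

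Next I would argue that the inclusion $V\subset U$ is equivalent to the non-vanishing of $\Hom_{V_\parzero}(J,\,U_\parone\!\restriction_{V_\parzero})$, i.e.\ to $J$ occurring in the odd part of $U$ as a $V_\parzero$-submodule. One direction is clear. For the converse, a nonzero (hence injective, as $J$ is simple) $V_\parzero$-module map embeds $J$ as a submodule $J'\subset U_\parone$; since $J$ is a $\Z_2$-simple current, $J\boxtimes J\cong V_\parzero$, so the odd-odd vertex operations of $U$ map $J'\otimes J'$ into a copy of the vacuum module inside $U_\parzero$, which by haploidness of the commutative algebra $U_\parzero$ over $V_\parzero$ must be $V_\parzero$ itself; hence $V_\parzero\oplus J'$ is closed under the operations of $U$ and, by the uniqueness of the $\Z_2$-simple current VOSA structure (\cite[Theorem 3.9]{CKL19}, \cite[Proposition 5.3]{DM04b}), is a sub-VOSA isomorphic to $V$. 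The identical reasoning on the net side shows that $\A\subset\B$ is equivalent to $\Hom(\tilde J,\,\B_\parone\!\restriction_{\A_\parzero})\neq0$.

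Finally, since $\mathfrak{E}$ is a unitary tensor equivalence it preserves $\Hom$-spaces and, by the second paragraph, sends $J\mapsto\tilde J$ and $U_\parone\!\restriction_{V_\parzero}\mapsto\B_\parone\!\restriction_{\A_\parzero}$; therefore the two non-vanishing conditions are equivalent, which yields $V\subset U\iff\A\subset\B$. Restricting the bijection of Theorem \ref{theo:equiv_VOSA_net_extensions} to the subclasses cut out by these matching conditions then produces the desired one-to-one correspondence. The main obstacle I anticipate is making the two inclusions genuinely categorical: one must verify that ``being a sub-VOSA with the same conformal vector'' and ``being a covariant graded subnet'' are each detected by the same $\Hom$-space condition in $\mathcal{C}$, which requires checking that the restriction functors on the even extensions intertwine correctly under $\mathfrak{E}$ and that the $\Z_2$-grading together with the twist $-\one$ is respected throughout.
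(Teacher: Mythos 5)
Your overall strategy coincides with the paper's: restrict the bijection of Theorem \ref{theo:equiv_VOSA_net_extensions} and characterize each of the inclusions $V\subset U$ and $\A\subset\B$ by one and the same categorical condition in $\mathcal{C}:=\Repu(V_\parzero)\cong\Repf(\A_\parzero)$. Indeed, your condition $\Hom_\mathcal{C}(J,U_\parone\restriction_{V_\parzero})\neq 0$ is equivalent, by Frobenius reciprocity \cite[Theorem 1.6]{KO02} and irreducibility of the modules involved, to the paper's condition that the induced module $B_\parzero\otimes A_\parone$ be equivalent to $B_\parone$ as a left $B_\parzero$-module; phrasing things with the latter, inside $\Mod_\mathcal{C}(B_\parzero)$, is also what resolves the restriction-functor compatibility that you flag as an obstacle but do not settle. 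The real problem is that your proofs of the two characterizations --- which are the actual content of the argument and occupy Propositions \ref{prop:charact_vosa_extensions} and \ref{prop:charact_graded_net_extensions} in the paper --- have genuine gaps. On the VOSA side, closure of $V_\parzero\oplus J'$ under the operations of $U$ is not enough: to invoke uniqueness of the $\Z_2$-simple current extension and conclude $V_\parzero\oplus J'\cong V$, you must know that $V_\parzero\oplus J'$ is a \emph{simple} VOSA, i.e.\ you must rule out $J'\cdot J'=\{0\}$. If all odd-odd products vanish, then $J'$ is a proper nonzero ideal of the vertex subalgebra $V_\parzero\oplus J'$, which therefore cannot be isomorphic to the simple VOSA $V$, and the uniqueness theorem is inapplicable. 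This is not a formality: the paper proves $A_\parone\cdot A_\parone=V_\parzero$ by a contradiction argument using rationality of matrix coefficients, skew-symmetry, the Borcherds commutator formula and the simplicity of $U$.

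On the net side, your appeal to ``identical reasoning'' fails outright: there are no vertex operations on a net with which to run the submodule/intertwiner argument, and ``being a covariant graded subnet'' is not obviously detected by a $\Hom$-space in $\Repf(\A_\parzero)$. The paper's Proposition \ref{prop:charact_graded_net_extensions} instead shows that $\B$ extends $\A$ if and only if $B_\parone$ is isomorphic to the $\alpha$-induction $\alpha_{A_\parone}$ of $A_\parone$, and then identifies $\alpha_{A_\parone}$ with the induced module $B_\parzero\otimes A_\parone$ by means of the $\alpha\sigma$-reciprocity of \cite{BE98}, strong additivity (to pass from local to global equivalence of endomorphisms of $\B_\parzero$), and a monodromy/twist (spin-statistics) argument establishing locality of $B_\parzero\otimes A_\parone$. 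Without this subfactor-theoretic input the net-side equivalence, and with it the theorem, remains unproved.
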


The proof of Theorem \ref{theo:equiv_VOSA_net_super_extensions} makes use of the following two propositions.

For the first one, consider a simple VOSA $V$ of CFT type and a simple CFT type VOSA extension $U$ of $V_\parzero$. Note that $V_\parzero$ and $U_\parzero$ are simple and of CFT type, whereas $V_\parone$ and $U_\parone$ are an irreducible $V_\parzero$-module  and $U_\parzero$-module respectively, see Remark \ref{rem:simplicity_even_part}. 
Suppose further that: $V_\parzero$ is regular and self-dual (then, it is strongly rational); non-zero vectors of the irreducible $V_\parzero$-modules have non-negative real conformal weights; $V_\parzero$ is the only irreducible $V_\parzero$-module with a non-zero vector of conformal weight zero. (Note that these two last conditions are satisfies if $V_\parzero$ is strongly unitary by \cite[Proposition 1.7]{Gui19I} and Proposition \ref{prop:uniqueness_module_with_zero_weight}.)
From \cite[Theorem 3.6]{HKL15}, cf.\ \cite[Theorem 5.2]{KO02}, $\Rep(U_\parzero)$ is equivalent to $\Mod_\mathcal{C}^0(B_\parzero)$ (Definition \ref{def:Modu0(A)}) as a modular tensor category, where $\mathcal{C}:=\Rep(V_\parzero)$ and $B_\parzero$ is the haploid commutative algebra in $\mathcal{C}$ with trivial twist, realizing the extension $V_\parzero\subset U_\parzero$. By \cite[Lemma 1.20]{KO02}, $B_\parzero$ is also rigid.
Denote by $A_\parone$ the $\Z_2$-simple current in $\mathcal{C}$ which uniquely determines $V$, as explained in the proof of Theorem \ref{theo:equiv_VOSA_net_extensions}. It corresponds to the $V_{\parzero}$-module $V_\parone$. Similarly, $B_\parone$ is the $\Z_2$-simple current in $\Rep(U_\parzero)\cong \Mod_\mathcal{C}^0(B_\parzero)$ giving $U$.
With an abuse of notation, we identify $B_\parzero$ with its corresponding object and we write $m:B_\parzero\otimes B_\parzero\to B_\parzero$ for its multiplication as algebra in $\mathcal{C}$.
Accordingly, define the object $(B_\parzero\otimes A_\parone, m_{B_\parzero\otimes A_\parone})$ in $\Mod_\mathcal{C}(B_\parzero)$ by $ m_{B_\parzero\otimes A_\parone}:= (m\otimes \one_{A_\parone})a_{B_\parzero, B_\parzero, A_\parone}$.
Then:

\begin{prop}  \label{prop:charact_vosa_extensions}
Let $V$ be a simple VOSA of CFT type with $V_\parone \neq \{0\}$. 
Assume that: $V_\parzero$ is strongly rational; non-zero vectors of the irreducible $V_\parzero$-modules have non-negative real conformal weights; $V_\parzero$ is the only irreducible $V_\parzero$-module with a non-zero vector of conformal weight zero. Then, $U$ is a simple CFT type VOSA extension of $V$ if and only if $B_\parzero\otimes A_\parone$ is a (local) left $B_\parzero$-module equivalent to $B_\parone$.
\end{prop}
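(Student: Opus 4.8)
The plan is to reduce the statement to a computation with the induction--restriction adjunction for the haploid commutative algebra $B_\parzero$. Write $\operatorname{Ind} := B_\parzero\otimes(-)\colon\mathcal{C}\to\Mod_\mathcal{C}(B_\parzero)$ for the induction functor, so that $\operatorname{Ind}(A_\parone)=(B_\parzero\otimes A_\parone, m_{B_\parzero\otimes A_\parone})$, and let $\operatorname{Res}$ be its right adjoint, the restriction functor; under the modular tensor equivalence $\Rep(U_\parzero)\cong\Mod^0_\mathcal{C}(B_\parzero)$ the restriction of a $U_\parzero$-module to $V_\parzero$ corresponds to $\operatorname{Res}$. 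Frobenius reciprocity then gives a natural isomorphism $\Hom_{B_\parzero}(\operatorname{Ind}(A_\parone),N)\cong\Hom_\mathcal{C}(A_\parone,\operatorname{Res}(N))$ for every $B_\parzero$-module $N$, which I would apply with $N=B_\parone$. Two facts will be used repeatedly: first, $\operatorname{Ind}$ is a tensor functor, so the invertibility of the $\Z_2$-simple current $A_\parone$ in $\mathcal{C}$ forces $\operatorname{Ind}(A_\parone)$ to be invertible, hence simple, in $\Mod_\mathcal{C}(B_\parzero)$; second, $B_\parone$ is a simple object of $\Mod^0_\mathcal{C}(B_\parzero)$, corresponding to the irreducibility of $U_\parone$ over $U_\parzero$.

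For the ``only if'' direction, suppose $V\subseteq U$. Compatibility of the gradings gives $V_\parone\subseteq U_\parone$ as $V_\parzero$-modules, i.e.\ a nonzero morphism $A_\parone\to\operatorname{Res}(B_\parone)$ in $\mathcal{C}$; by the adjunction this corresponds to a nonzero $B_\parzero$-module map $f\colon\operatorname{Ind}(A_\parone)\to B_\parone$. Since $V_\parone\neq\{0\}$ and $U_\parone$ is irreducible over $U_\parzero$, the image of $f$ is all of $B_\parone$ (equivalently $U_\parzero\cdot V_\parone=U_\parone$), while the simplicity of $\operatorname{Ind}(A_\parone)$ makes $f$ injective; hence $f$ is an isomorphism. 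As $B_\parone$ is local, so is $B_\parzero\otimes A_\parone$, which is the stated condition.

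For the ``if'' direction, an equivalence $B_\parzero\otimes A_\parone\cong B_\parone$ of local modules produces, again by the adjunction, a nonzero $\phi\in\Hom_\mathcal{C}(A_\parone,\operatorname{Res}(B_\parone))$; simplicity of $A_\parone$ makes $\phi$ injective, realizing $V_\parone$ as a $V_\parzero$-submodule of $U_\parone\!\restriction_{V_\parzero}$. Together with $V_\parzero\subseteq U_\parzero$ this yields a grading-preserving injection $V=V_\parzero\oplus V_\parone\hookrightarrow U_\parzero\oplus U_\parone=U$ of $V_\parzero$-modules, and it remains to promote it to a VOSA embedding. I would check that $V_\parzero\oplus V_\parone$ is closed under $Y_U$: the even--even and even--odd products stay inside because $V_\parzero\subseteq U_\parzero$ is a sub-VOA and $\phi$ is $V_\parzero$-linear, whereas odd--odd products land in $V_\parzero$ because $\Hom_\mathcal{C}(A_\parone\otimes A_\parone,\operatorname{Res}(B_\parzero))\cong\Hom_\mathcal{C}(\id_\mathcal{C},B_\parzero)\cong\C$, by the fusion rule $A_\parone\otimes A_\parone\cong\id_\mathcal{C}$ and the haploidness of $B_\parzero$, so the corresponding vertex operator of $U$ necessarily maps $V_\parone\otimes V_\parone$ into $V_\parzero$. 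The structure thus induced on $V_\parzero\oplus V_\parone$ is a simple CFT type VOSA extension of $V_\parzero$ built from the fermionic simple current $A_\parone$; by the uniqueness of such a structure (\cite[Theorem 3.9]{CKL19} and \cite[Proposition 5.3]{DM04b}) it coincides with $V$, so $V\hookrightarrow U$ is a VOSA embedding and, $U$ being already simple of CFT type, $U$ is a simple CFT type VOSA extension of $V$.

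The step I expect to be the main obstacle is precisely this last compatibility check in the converse, namely upgrading the module map $\phi$ to an embedding of VOSAs. Rather than verifying the super-Jacobi and locality identities for $Y_U$ restricted to $V_\parzero\oplus V_\parone$ by hand, I would lean on the uniqueness of the VOSA structure attached to the $\Z_2$-simple current $A_\parone$ (carrying $b_{A_\parone,A_\parone}=-\one_{A_\parone\otimes A_\parone}$ and $\omega(A_\parone)=-\one_{A_\parone}$), so that the closed subspace $V_\parzero\oplus V_\parone$ automatically inherits the correct extension data. The remaining ingredients---invertibility and simplicity of $\operatorname{Ind}(A_\parone)$, and the automatic locality of $B_\parzero\otimes A_\parone$ once it is identified with $B_\parone$---are routine consequences of $\operatorname{Ind}$ being a tensor functor and of $B_\parone\in\Mod^0_\mathcal{C}(B_\parzero)$.
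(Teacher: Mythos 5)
Your proposal follows the same route as the paper: Frobenius reciprocity together with the irreducibility of $B_\parzero\otimes A_\parone$ and of $B_\parone$ settles the ``only if'' direction, and in the converse you embed $A_\parone$ into $U$, check that $V_\parzero\oplus A_\parone$ is closed under $Y_U$ using the fusion rule $A_\parone\otimes A_\parone\cong\id_\mathcal{C}$ and the haploidness of $B_\parzero$, and then invoke uniqueness of $\Z_2$-simple current extensions. The forward direction and the closure computation are essentially correct. But the converse has a genuine gap exactly where you lean on uniqueness: the results you cite (\cite[Proposition 5.3]{DM04b}, \cite[Theorem 3.9]{CKL19}) assert uniqueness of the \emph{simple} VOSA structure on $V_\parzero\oplus A_\parone$, and you never prove that the structure induced from $Y_U$ is simple. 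This is not automatic. One-dimensionality of $\Hom_\mathcal{C}(A_\parone\otimes A_\parone, B_\parzero)$ tells you that every intertwining operator of that type factors through $\iota$, hence lands in $V_\parzero$; it does not tell you that the one obtained by restricting $Y_U$ is nonzero. If all odd--odd products vanished, $V_\parzero\oplus A_\parone$ would be the square-zero extension, a perfectly good but non-simple vertex superalgebra in which $A_\parone$ is a proper ideal; it is not isomorphic to $V$ (which is simple with $V_\parone\neq\{0\}$, so $V_\parone\cdot V_\parone\neq\{0\}$), and the uniqueness theorem then gives you nothing. As written, your argument does not exclude this case, so it does not produce the embedding $V\hookrightarrow U$.

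Closing this gap is the most substantial part of the paper's proof. Assuming $A_\parone\cdot A_\parone=\{0\}$, one gets $Y_U(a,z)Y_U(b,w)c=0$ for all $a\in U$ and $b,c\in A_\parone$, hence by weak commutativity $(z-w)^N Y_U(b,z)Y_U(a,w)c=0$ for large $N$; since a formal series can be annihilated by $(z-w)^N$ without vanishing, the paper pairs against the restricted dual $U'$ and uses rationality of matrix coefficients (\cite[Proposition 3.2.1]{FHL93}) to conclude $Y_U(b,z)Y_U(a,w)c=0$. Combining this with $U_\parzero\cdot A_\parone=U_\parone$ (which uses the simplicity of $U$), skew-symmetry and the Borcherds commutator formula yields $U_\parone\cdot U_\parone=\{0\}$, so $U_\parone$ would be a proper nonzero ideal of $U$, contradicting simplicity. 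Hence $A_\parone\cdot A_\parone=V_\parzero$, from which one checks that every nonzero ideal of $V_\parzero\oplus A_\parone$ is everything, i.e.\ the subalgebra is simple; only at that point may the uniqueness of simple current extensions be applied to identify it with $V$. You need to supply this nonvanishing/simplicity argument (or an equivalent one) for your converse to be complete.
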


\begin{proof} 
Suppose that $U$ is a simple CFT type extension of $V$. By \cite[Theorem 1.6 point 2.]{KO02}, cf.\ \cite[Lemma 2.61]{CKM21}, we have that $\Hom_{\Mod_\mathcal{C}(B_\parzero)}(B_\parzero\otimes A_\parone, B_\parone)$ is naturally isomorphic to $\Hom_\mathcal{C}(A_\parone, B_\parone)$. 
On the one hand, as $A_\parone$ is invertible, $B_\parzero\otimes A_\parone$ is an irreducible $B_\parzero$-module. 
On the other hand, as $U$ extends $V$, $A_\parone$ is a $V_\parzero$-submodule of $B_\parone$ and thus $\Hom_\mathcal{C}(A_\parone, B_\parone)$ is non-zero. Therefore, $\Hom_{\Mod_\mathcal{C}(B_\parzero)}(B_\parzero\otimes A_\parone, B_\parone)$ is non-zero too. Thus, $B_\parzero\otimes A_\parone$ and $B_\parone$ must be equivalent left $B_\parzero$-modules. In particular, $B_\parzero\otimes A_\parone$ is local as $B_\parone$ is.

Vice versa, suppose that $B_\parzero\otimes A_\parone$ and $B_\parone$ are equivalent (local) left $B_\parzero$-modules. 
Then, they are equivalent as $V_\parzero$-module too. As $A_\parone$ is a $V_\parzero$-submodule of $B_\parzero\otimes A_\parone$, it is a $V_\parzero$-submodule of $B_\parone$ and thus of $U$.
\footnote{With an abuse of notation, we denote $V_\parzero$ and $U$, seen as $V_\parzero$-modules, still with their respective VOSA symbols, leaving their interpretation as VOSAs or as $V_\parzero$-modules to the context.}
Now, consider the $V_\parzero$-submodule of $U$ given by the direct sum $V_\parzero\oplus A_\parone$. Our goal is to prove that $V_\parzero\oplus A_\parone$
is a simple vertex subalgebra of $U$.

To this end, let $a,b\in V_\parzero$ and $c,d\in A_\parone$ and consider their modes $a_{(n)}, b_{(m)}, c_{(h)}, d_{(k)}$ for arbitrary $n,m,h,k\in\Z$, given by the state-field correspondence of $U$. $a_{(n)}b\in V_\parzero$ as $V_\parzero$ is a vertex subalgebra of $U$. $a_{(n)}c\in A_\parone$ as $A_\parone$ is a $V_\parzero$-submodule of $U$. By the skew-symmetry \cite[Section 4.2]{Kac01} for $U$, it is easy to check that $c_{(k)}a\in A_\parone$. Then, it remains to prove that $c_{(k)}d\in V_\parzero$. For any pair of subsets 
$C$ and $D$ of $U$ we denote by $C\cdot D$ the vector subspace of $U$ linearly spanned by vectors $c_{(k)}d$ for arbitrary $c \in C$, $d\in A_\parone$ and $k\in \Z$. Using the Borcherds commutator formula \cite[Section 4.8]{Kac01} for $U$, we can prove that $A_\parone\cdot A_\parone$ is a $V_\parzero$-module. 
Recall that $A_\parone$ is a proper $V_\parzero$-submodule of $B_\parone$ and $B_\parone\otimes B_\parone=B_\parzero$ as $B_\parzero$-modules and thus as $V_\parzero$-modules too.
Then, $A_\parone\cdot A_\parone$ is a proper $V_\parzero$-submodule of $B_\parzero$. 
Now, let $L$ be any irreducible $V_\parzero$-submodule of $A_\parone\cdot A_\parone$ different from $V_\parzero$. Since $U$ is of CFT type, $L$ must be inequivalent to $V_\parzero$. If $Y_U$ is the state-field correspondence for $U$, then $Y_U(\cdot,z)|_{A_\parone}$ is an intertwining operator for $V_\parzero$-modules of type $\binom{A_\parone\cdot A_\parone}{A_\parone \,\, A_\parone}$ (obviously restricting the charge space to $A_\parone$).
Consider a projection $E_L$ from $U$ onto $L$ commuting with the $V_\parzero$ action on $U$. Then, $E_L Y_U(\cdot,z)|_{A_\parone}$ is an intertwining operator for $V_\parzero$-modules of type $\binom{L}{A_\parone \,\, A_\parone}$ (still restricting the charge space to $A_\parone$). Then, $E_L Y(\cdot,z)|_{A_\parone}$ must be zero as $A_\parone$ is a $\Z_2$-simple current in $\mathcal{C}$. It follows that $L=E_{L}A_\parone\cdot A_\parone=\{0\}$ and thus $A_\parone\cdot A_\parone \subset V_\parzero$. This implies that $V_\parzero \oplus A_\parone$ is a vertex subalgebra of $U$. It remains to prove that this subalgebra is simple.  

To this end we first prove that 
$A_\parone \cdot A_\parone \neq \{ 0 \}$ so that $A_\parone \cdot A_\parone = V_\parzero$ because $A_\parone \cdot A_\parone$ is an ideal of the simple VOA $V_\parzero$. Assume by contradiction that  $A_\parone \cdot A_\parone = \{ 0 \}$. It follows that  
$Y(a,z)Y(b,w)c = 0$ for all $a \in U$ and all $b, c \in A_\parone$. Hence, for any $a \in U$ and any $b, c \in A_\parone$, there is a 
positive integer $N$ such that $(z-w)^NY(b,z)Y(a,w)c = 0$. Now, if $d$ is in the restricted dual $U'$ of $U$ then 
$\langle d, Y(b,z)Y(a,w)c \rangle$ converges to a rational function $R(z,w)$ in the domain $|z| > |w|$, see e.g.\  \cite[Proposition 3.2.1]{FHL93} and \cite[Remark 4.9a]{Kac01}.  Since $(z-w)^NR(z,w) = 0$ for $|z| > |w|$ we must have  $R(z,w)=0$ and thus $\langle d, Y(b,z)Y(a,w)c \rangle =0$. Since $d \in U'$ was arbitrary we can conclude that $Y(b,z)Y(a,w)c =0$. Now, 
$U_\parzero \cdot A_\parone$ is  a  non-zero $U_\parzero$-submodule of $U$  by \cite[Proposition 4.5.6]{LL04} and thus it must coincide with $U_\parone $ because $U$ is a simple VOSA. It follows that  $A_\parone \cdot U_\parone=\{0\}$ and hence 
 $U_\parone \cdot A_\parone=\{0\}$ by skew-symmetry. Hence, by the Borcherds commutator formula we find  that
 $U_\parone \cdot U_\parone  = U_\parone \cdot \left( U_\parzero \cdot A_\parone \right) =\{ 0 \}$ so that $U \cdot U_\parone = U_\parone$ in contradiction with the simplicity of $U$.  Therefore, $A_\parone \cdot A_\parone = V_\parzero$. 
  
Now, let $\mathscr{I}$ be a non-zero VOSA ideal of $V_\parzero \oplus A_\parone$. Then, $\mathscr{I}$ is a non-zero 
$V_\parzero$-submodule of $V_\parzero \oplus A_\parone$ and hence $V_\parzero \subset \mathscr{I}$ or $A_\parone \subset \mathscr{I}$ . 
If $V_\parzero \subset \mathscr{I}$, then also $A_\parone \subset \mathscr{I}$ so that $\mathscr{I}= V_\parzero \oplus A_\parone$. 
If $A_\parone \subset \mathscr{I}$, then $V_\parzero =  A_\parone \cdot A_\parone \subset \mathscr{I}$ and again 
$\mathscr{I} = V_\parzero \oplus A_\parone$. Hence, $V_\parzero \oplus A_\parone$ is a simple VOSA.

To sum up, we have proved that $V_\parzero\oplus A_\parone$ is a $\Z_2$-simple current extension of $V_\parzero$, giving a simple vertex subalgebra of $U$. It is well known that the VOSA structure of a simple current extension is unique, up to isomorphism, cf.\ \cite[Proposition 5.3]{DM04b}. Therefore, $V_\parzero\oplus A_\parone$ must be isomorphic to $V$, concluding the proof.
\end{proof}

For the second proposition, we proceed similarly to the first one. 
Let $\A$ be an irreducible graded-local conformal net and $\B$ be an irreducible graded-local conformal net extension of $\A_\parzero$. Suppose that $\A_\parzero$ is completely rational and that it is a proper subnet of $\A$.
By \cite[Proposition 6.4]{BKL15}, cf.\ also \cite[Theorem 3.1]{Mue10a}, \cite[Proposition 5.1]{Bis16} and \cite[Main Theorem C]{Gui21b}, 
$\Repf(\B_\parzero)$ is equivalent to $\Mod_\mathcal{C}^{\opu,0}(B_\parzero)$ as a unitary modular tensor category, where $\mathcal{C}:=\Repf(\A_\parzero)$ and $B_\parzero$ is the normalized haploid commutative Q-system in $\mathcal{C}$ (with trivial twist by \cite{GL96} or by Proposition \ref{prop:trivial_twist}), realizing the extension $\A_\parzero\subset \B_\parzero$.
Call $A_\parone$ the $\Z_2$-simple current in $\mathcal{C}$ which uniquely determines $\A$, as explained in the proof of Theorem \ref{theo:equiv_VOSA_net_extensions}. Similarly, $B_\parone$ is the $\Z_2$-simple current in $\Repf(\B_\parzero)\cong \Mod_\mathcal{C}^{\opu,0}(B_\parzero)$ giving $U$.
Also in this case, identify, with an abuse of notation, $B_\parzero$ with its corresponding object and say $m:B_\parzero\otimes B_\parzero\to B_\parzero$ for its multiplication as algebra in $\mathcal{C}$.
Similarly, define the object $(B_\parzero\otimes A_\parone, m_{B_\parzero\otimes A_\parone})$ in $\Mod_\mathcal{C}^{\opu}(B_\parzero)$ by $ m_{B_\parzero\otimes A_\parone}:= (m\otimes \one_{A_\parone})a_{B_\parzero, B_\parzero, A_\parone}$.
Therefore: 

\begin{prop}  \label{prop:charact_graded_net_extensions} Let $\A$ be an irreducible graded-local conformal net and assume that 
$\A_\parzero$ is completely rational. Then, $\B$ is an irreducible graded-local conformal net extension of $\A$ if and only if $B_\parzero\otimes A_\parone$ is a (necessarily local) left $B_\parzero$-module equivalent to $B_\parone$.
\end{prop}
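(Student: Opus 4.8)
The plan is to follow the strategy of Proposition \ref{prop:charact_vosa_extensions}, replacing the vertex-algebraic closure computations by the Galois correspondence between intermediate (graded-local) nets and sub-Q-systems; the forward implication is purely categorical and transfers almost verbatim. Since the free-module functor $X \mapsto B_\parzero \otimes X$ is left adjoint to restriction, Frobenius reciprocity \cite[Theorem 1.6 point 2.]{KO02}, cf.\ \cite[Lemma 2.61]{CKM21}, provides a natural isomorphism $\Hom_{\Mod_\mathcal{C}(B_\parzero)}(B_\parzero\otimes A_\parone, B_\parone)\cong\Hom_\mathcal{C}(A_\parone, B_\parone)$, where on the right $B_\parone$ is viewed as an object of $\mathcal{C}=\Repf(\A_\parzero)$ by restriction. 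If $\B$ is an irreducible graded-local extension of $\A$, then $\A$ is a covariant subnet of $\B$ and the $\Z_2$-grading of $\B$ restricts to that of $\A$; hence the odd sector $A_\parone$ of $\A$ embeds into the odd part $B_\parone$ of $\B$, so $\Hom_\mathcal{C}(A_\parone, B_\parone)\neq\{0\}$ and thus $\Hom_{\Mod_\mathcal{C}(B_\parzero)}(B_\parzero\otimes A_\parone, B_\parone)\neq\{0\}$. As $A_\parone$ is invertible and $B_\parzero$ is haploid, $B_\parzero\otimes A_\parone$ is an irreducible $B_\parzero$-module, and $B_\parone$ is irreducible as a simple current; a nonzero morphism between irreducibles is an isomorphism, so $B_\parzero\otimes A_\parone\cong B_\parone$, which is automatically local since $B_\parone$ is.

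For the converse, assume $B_\parzero\otimes A_\parone\cong B_\parone$. Reading the same reciprocity isomorphism backwards yields $\Hom_\mathcal{C}(A_\parone, B_\parone)\neq\{0\}$, so $A_\parone$ is a subobject of $B_\parone$ in $\mathcal{C}$; equivalently, the sector $A_\parone$ occurs in the restriction of $\B$ to $\A_\parzero$. I would then pass to the Q-system picture: the extension $\A_\parzero\subset\A$ is encoded by the $\Z_2$-simple-current Q-system $Q_\A\cong \id_\mathcal{C}\oplus A_\parone$, with twist $-\one_{A_\parone}$ by the correct-statistics assumption, while $\A_\parzero\subset\B$ is encoded by a haploid graded Q-system $Q_\B$ whose underlying object in $\mathcal{C}$ is $B_\parzero$ together with the restriction of $B_\parone$, and hence contains both $\id_\mathcal{C}$ (the haploid unit of $B_\parzero$) and $A_\parone$ (just established). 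The crucial point, exactly paralleling the identity $A_\parone\cdot A_\parone=V_\parzero$ in Proposition \ref{prop:charact_vosa_extensions}, is that $A_\parone$ is a $\Z_2$-simple current, so $A_\parone\otimes A_\parone\cong\id_\mathcal{C}$ and the fusion of two odd sectors returns to the vacuum sector $\A_\parzero$; combined with the haploidness of $Q_\B$, this shows that $\id_\mathcal{C}\oplus A_\parone$ is closed under the multiplication of $Q_\B$, i.e.\ it is a sub-Q-system of $Q_\B$ isomorphic to $Q_\A$.

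By the Galois correspondence between intermediate graded-local nets and intermediate Q-systems \cite{LR95,CKL08,CHKLX15}, cf.\ \cite{BKLR15}, this sub-Q-system determines an intermediate net $\A_\parzero\subset\A'\subset\B$, and the uniqueness of the $\Z_2$-simple-current (graded-local) extension \cite{CKL08,CHKLX15} identifies $\A'$ with $\A$; Remark \ref{rem:graded-local extension} then exhibits $\B$ as an irreducible graded-local conformal net extension of $\A$. The main obstacle is precisely this last, net-theoretic step: promoting the categorical subobject embedding $A_\parone\hookrightarrow B_\parone$ to a genuine inclusion of nets by verifying that $\id_\mathcal{C}\oplus A_\parone$ inherits a nondegenerate graded Q-system structure from $Q_\B$, which is the operator-algebraic counterpart of checking that the charged intertwiners implementing $A_\parone$ generate, together with $\A_\parzero$, a closed intermediate net.
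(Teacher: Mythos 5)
Your forward implication (extension $\Rightarrow$ module equivalence) is correct, and it takes a genuinely different route from the paper: you transplant the Frobenius reciprocity/Schur argument from Proposition \ref{prop:charact_vosa_extensions}, getting the isomorphism $B_\parzero\otimes A_\parone \cong B_\parone$ directly from the nonvanishing of $\Hom_\mathcal{C}(A_\parone,B_\parone)$ and irreducibility, with locality inherited for free from $B_\parone$. The paper instead derives this direction from the characterization ``$\B\supset\A$ iff $B_\parone\cong\alpha_{A_\parone}$'', the restriction formula \cite[Proposition 3.9]{LR95}, an explicit twist/monodromy computation to obtain locality of $B_\parzero\otimes A_\parone$, and $\alpha\sigma$-reciprocity \cite[Theorem 3.21]{BE98}. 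Your version of this half is shorter and perfectly sound.

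The converse, however, contains a genuine gap --- the one you yourself flag as ``the main obstacle'' and then do not resolve, even though it is the crux of this direction. Haploidness of $Q_\B$ does give closure of $\id_\mathcal{C}\oplus A_\parone$ under the multiplication (the image of $A_\parone\otimes A_\parone\cong\id_\mathcal{C}$ can only land in the unit copy of $\id_\mathcal{C}$), but closure under multiplication does not make $\id_\mathcal{C}\oplus A_\parone$ a sub-Q-system: you must also show (i) that the component $A_\parone\otimes A_\parone\to\id_\mathcal{C}$ of the multiplication is \emph{nonzero} --- otherwise the restricted algebra is degenerate (non-rigid, exactly as in Example \ref{ex:CE2}) and defines no intermediate net at all --- and (ii) that the subobject is compatible with the coalgebra structure (closed under $m^*$, so that the restricted triple is a special $C^*$-Frobenius algebra), which is what the correspondence between intermediate nets and intermediate Q-systems actually requires. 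Point (i) is precisely the analogue of the identity $A_\parone\cdot A_\parone=V_\parzero$ in Proposition \ref{prop:charact_vosa_extensions}, which there needed a dedicated, nontrivial argument (contradiction with simplicity via skew-symmetry and the Borcherds commutator formula); nothing in your sketch replaces it. It is fixable in the $C^*$ setting --- under your hypothesis $Q_\B$ contains exactly one copy of $A_\parone$ (no copy sits in $B_\parzero$ since all its simple summands have trivial twist while $\omega(A_\parone)=-\one_{A_\parone}$), so non-degeneracy of the Frobenius pairing $\iota^*m$ of $Q_\B$ forces $m$ restricted to $A_\parone\otimes A_\parone$ to be nonzero --- but this argument, or some substitute, must be supplied. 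In addition, the Galois correspondence you cite (\cite{Lon03}) is formulated for local nets; for the graded-local (fermionic) inclusion $\A_\parzero\subset\B$ one needs its graded versions, which is a further point requiring justification.

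By contrast, the paper's proof of this direction never constructs an intermediate net: it identifies the local module $B_\parzero\otimes A_\parone$ with a DHR automorphism of $\B_\parzero$ via \cite[Proposition 6.4]{BKL15}, applies $\alpha\sigma$-reciprocity \cite[Theorem 3.21]{BE98} to conclude that this automorphism is locally equivalent to $\alpha_{A_\parone}$, upgrades local to global equivalence by strong additivity, and concludes $B_\parone\cong\alpha_{A_\parone}$, which is equivalent to $\B$ extending $\A$. This bypasses entirely the non-degeneracy and sub-Q-system issues on which your argument is stuck.
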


\begin{proof}
It is not difficult to see that $\B$ extends $\A$ if and only if $B_\parone$ is isomorphic to the \textit{$\alpha$-induction} 
$\alpha_{A_\parone}$ of $A_\parone$, for the 
definition and properties of $\alpha$-induction see  \cite{BE98,LR95}. 

Now, $\alpha_{A_\parone}$ is an automorphism (possibly solitonic) of the net $\B_\parzero$ by \cite[Theorem 3.21]{BE98}, and thus it is an irreducible endomorphism. Similarly,
$B_\parzero \otimes A_\parone$ is invertible and hence irreducible in ${\Mod_{\mathcal C}^{\opu}(B_\parzero)}$ by \cite[Theorem 2.59]{CKM21}. 

If $B_\parzero\otimes A_\parone$ is a left $B_\parzero$-module equivalent to $B_\parone$, then we can identify 
$B_\parzero\otimes A_\parone$ with a DHR automorphism of the net $\B_\parzero$ by \cite[Proposition 6.4]{BKL15}. Then, it follows by the  
\textit{$\alpha \sigma$-reciprocity} in \cite[Theorem 3.21]{BE98}  that $\alpha_{A_\parone}$  and $B_\parzero\otimes A_\parone$ are locally equivalent endomorphisms of $\B_\parzero$. Thus, they are globally equivalent by  strong additivity. It follows that 
$\alpha_{A_\parone}$ is a DHR automorphism of $\B_\parzero$ equivalent to $B_\parone$ and hence $\B$ is an irreducible graded-local conformal net extension of $\A$. 

Conversely, if  $\B$ is an irreducible graded-local conformal net extension of $\A$ then $\alpha_{A_\parone}$ is a DHR automorphism 
of $\B_\parzero$ equivalent to $B_\parone$. Now, $\alpha_{A_\parone}$ restricts to a DHR representation of $\A_\parzero$ equivalent 
to $B_\parzero\otimes A_\parone$, cf.\ \cite[Proposition 3.9]{LR95}. It follows that the twist $\omega(B_\parzero\otimes A_\parone)$ is equal to $-\one_{B_\parzero} \otimes \one_{A_\parone} = \omega(B_\parzero) \otimes \omega(A_\parone)$. As a consequence,
the monodromy $\mathcal{M}_{B_\parzero, A_\parone}:=b_{A_\parone, B_\parzero} b_{B_\parzero, A_\parone}$ is equal to $\one_{B_\parzero\otimes A_\parone}$. Thus,  $B_\parzero\otimes A_\parone$  is local and using again the \textit{$\alpha \sigma$-reciprocity} in \cite[Theorem 3.21]{BE98}, we find that $B_\parzero\otimes A_\parone$ is a left $B_\parzero$-module equivalent to $B_\parone$.
\end{proof}

\medskip

\begin{proof}[Proof of Theorem \ref{theo:equiv_VOSA_net_super_extensions}]
By Theorem \ref{theo:equiv_VOSA_net_extensions} and its proof, there is a one-to-one correspondence between simple CFT type VOSA extensions of $V_\parzero$ and irreducible graded-local conformal net extensions of $\A_{V_\parzero}\cong \A_\parzero$. Moreover, if $U$ is such an extension of $V_\parzero$, then $U_\parzero$ is completely unitary by Theorem \ref{theo:equiv_VOA_net_extensions} and $\Repu(U_\parzero)\cong \Mod_{\mathcal{C}}^{\opu, 0}(B_\parzero) \cong \Repf(\A_\parzero^{U_\parzero})$ are equivalences of unitary modular tensor categories, where $B_\parzero$ is the normalized haploid commutative Q-system (with trivial twist by \cite{GL96} or by Proposition \ref{prop:trivial_twist}) in $\mathcal{C}:= \Repu(V_\parzero) \cong \Repf(\mathcal{A}_\parzero)$, giving the extensions $V_\parzero\subset U_\parzero$ and $\A_\parzero\subset \A_\parzero^{U_\parzero}$. Note that, without loss of consistency, we can consider every $\Z_2$-simple current in $\Rep(U_\parzero)$ as an object in $\Repu(U_\parzero)$, see Remark \ref{rem:complete_unitarity}. Then, the statement of the theorem follows from Proposition \ref{prop:charact_vosa_extensions} and Proposition \ref{prop:charact_graded_net_extensions}.
\end{proof}

\section{Unitarity and strong locality of the Schellekens list}   \label{sec:unitarity_schellekens_list}

We start with two central definitions, see Section \ref{sec:applications} for the general setting.

\begin{defin}
	A strongly rational \textbf{holomorphic} VOA is a strongly rational VOA, which has itself as unique irreducible module.
\end{defin}

Similarly, we have:

\begin{defin}
	A \textbf{holomorphic} conformal net is a completely rational conformal net, which has itself as unique irreducible representation, or equivalently \cite[Definition 2.6]{KL06}, which has $\mu$-index equal to 1. 
\end{defin}	

Over the recent years, it was proved that any strongly rational holomorphic VOA with central  charge $c=24$  is completely determined by the Lie algebra structure on its weight-one subspace if the latter is non-trivial. 
More precisely, every strongly rational holomorphic VOA $U$ with central charge $c=24$ and with $U_1\not=\{0\}$ is a simple CFT type VOA extension of its affine vertex subalgebra $V_{U_1}$.
Moreover, the Lie algebra $U_1$ must be in the list of complex Lie algebras in Schellekens' work \cite{Sch93} on holomorphic $c=24$ CFTs, see \cite{DM04a}, \cite{EMS20} and \cite{ELMS21}. This list \cite[Table 1]{Sch93} counts 71 entries in total. To entry  0 is associated the \textbf{Moonshine VOA}, see e.g.\ \cite{FLM88}, which has indeed trivial weight-one subspace. Although the uniqueness of the Moonshine VOA is still a very important open problem, the existence and uniqueness of the other 70 VOAs was proved in a series of papers, based on a case-by-case analysis, see e.g.\ \cite{LS19} (and \cite{LS20}) for a review, and through uniform approaches slightly later, see \cite{Hoh17} and \cite{Lam19,MS20,MS21,HM20,CLM22,BLS22}, cf.\ also \cite{LM22}.

Regarding the unitarity of these 71 VOAs, it is already known for the 24 of them arising from the corresponding \textit{Niemeier lattices}, see e.g.\ \cite{CS99} for these lattices, see \cite[Figure 6.1]{Mol16} for the corresponding entries in the list, see the third row of Table \ref{table:prop_notorious_VOAs} for lattice VOAs. The \textbf{Leech lattice} $\Lambda$ and the corresponding \textbf{Leech lattice VOA} $V_\Lambda$ are among them.  
Furthermore, the unitarity of the Moonshine VOA was established in \cite[Theorem 4.15]{DL14} and in \cite[Example 5.10]{CKLW18}. 

In the first result here below (Theorem \ref{theo:unitarity_holomorphicVOAs}), we apply what has been developed in Section \ref{sec:applications} to give a uniform proof of the \emph{unitarity} of the 69 VOAs from the Schellekens list, excluding the Leech lattice VOA and the Moonshine VOA, but including all the previously missing cases. For each of these 69 entries, we show the \emph{existence} of a holomorphic conformal net with central  charge $c=24$, as an extension of the affine conformal net associated with the corresponding strongly local unitary affine VOA, see the second row of Table \ref{table:prop_notorious_VOAs}. We point out that some of these conformal nets are new, see Remark \ref{rem:other_holomorphic_conformal_net_constructions} 
and Remark \ref{rem:other_holomorphic_conformal_net_constructions2} for details. We also prove a \emph{uniqueness} result for all of them. A similar uniqueness result holds for the Leech lattice conformal net, which will be treated separately for expository reasons (Theorem \ref{theo:leech_uniqueness}), just after the following:

\begin{theo}  \label{theo:unitarity_holomorphicVOAs}
	Every strongly rational holomorphic VOA $U$ with central  charge $c=24$ and weight-one subspace $U_1$ being a semisimple Lie algebra is unitary. Let $V_{U_1}$ be the affine vertex subalgebra of $U$ generated by $U_1$ and let $\A_{V_{U_1}}$ be the irreducible conformal net associated to $V_{U_1}$. Then, there exists a  holomorphic irreducible conformal net extension $\A_{V_{U_1}}^U$ of 
$\A_{V_{U_1}}$ such that the conformal Hamiltonian $L_0$ of $\A_{V_{U_1}}^U$ on the vacuum Hilbert space $\mathcal{H}_U$ satisfies $\operatorname{dim}( \operatorname{Ker} (L_0-1_{\mathcal{H}_U}) )= \operatorname{dim} (U_1)$. If $\mathcal{B}$ is a holomorphic irreducible conformal net extension of $\A_{V_{U_1}}$ acting on a Hilbert space $\mathcal{H}_{\mathcal{B}}$ 
such that $\operatorname{dim}( \operatorname{Ker} (L_0-1_{\mathcal{H}_{\mathcal{B}}}) )= \operatorname{dim} (U_1)$, then
$\mathcal{B}$ is isomorphic to  $\A_{V_{U_1}}^U$.
\end{theo}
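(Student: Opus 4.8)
The plan is to reduce all three assertions---unitarity of $U$, existence of the net $\A_{V_{U_1}}^U$, and its uniqueness---to the correspondence of Corollary \ref{cor:extensions_notorious_VOAs}, combined with the structure theory and classification of strongly rational holomorphic VOAs with $c=24$. First I would identify the affine subalgebra. Since $U_1$ is a semisimple Lie algebra under the bracket $a_{(0)}b$, the structure theory of holomorphic $c=24$ VOAs (Schellekens \cite{Sch93}) shows that the subVOA $V_{U_1}$ generated by $U_1$ is a tensor product $\bigotimes_i L(\mathfrak{g}_i,k_i)$ of simple affine VOAs at positive integer levels whose Sugawara central charges sum to $24$, so that $V_{U_1}$ has the \emph{same} conformal vector as $U$. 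Consequently $U$ is a simple CFT type VOA extension of $V_{U_1}$ in the sense of Definition \ref{def:VOA extension}. Each $L(\mathfrak{g}_i,k_i)$ is a unitary affine VOA associated to a simple Lie algebra, so $V_{U_1}$ falls under the hypotheses of Corollary \ref{cor:extensions_notorious_VOAs}; in particular it is strongly local and completely unitary. Theorem \ref{theo:unitarity_voa_extensions} then yields at once that $U$ is unitary (in fact completely unitary), settling the first claim.

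For existence, I would apply Corollary \ref{cor:extensions_notorious_VOAs} with $V:=V_{U_1}$ and its extension $U$, obtaining an irreducible conformal net extension $\A_{V_{U_1}}^U$ of $\A_{V_{U_1}}$ on the Hilbert space completion $\mathcal{H}_U$ of $U$, with $\Repu(U)$ equivalent to $\Repf(\A_{V_{U_1}}^U)$ as unitary modular tensor categories. Being a finite-index extension of the completely rational net $\A_{V_{U_1}}$, the net $\A_{V_{U_1}}^U$ is again completely rational. Since $U$ is holomorphic, $\Rep(U)$, hence $\Repu(U)$ (by Remark \ref{rem:complete_unitarity}), is trivial, so $\Repf(\A_{V_{U_1}}^U)$ is trivial; a completely rational net with trivial representation category has $\mu$-index $1$ and is therefore holomorphic. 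Finally, the conformal Hamiltonian $L_0$ on $\mathcal{H}_U$ is the self-adjoint closure of the VOA grading operator, whose eigenspaces are the finite-dimensional homogeneous pieces $U_n$; thus $\operatorname{Ker}(L_0-1_{\mathcal{H}_U})=U_1$ and $\dim \operatorname{Ker}(L_0-1_{\mathcal{H}_U})=\dim U_1$, as required.

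For uniqueness, let $\mathcal{B}$ be a holomorphic irreducible conformal net extension of $\A_{V_{U_1}}$ on $\mathcal{H}_{\mathcal{B}}$ with $\dim \operatorname{Ker}(L_0-1_{\mathcal{H}_{\mathcal{B}}})=\dim U_1$. By the bijective correspondence of Corollary \ref{cor:extensions_notorious_VOAs}, $\mathcal{B}=\A_{V_{U_1}}^W$ for some simple CFT type VOA extension $W$ of $V_{U_1}$, with $\mathcal{H}_{\mathcal{B}}$ the completion of $W$ and $\Repf(\mathcal{B})\cong\Repu(W)$; such an extension of the strongly rational $V_{U_1}$ is again strongly rational. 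Triviality of $\Repf(\mathcal{B})$ forces $\Rep(W)$ trivial, so $W$ is holomorphic of central charge $24$. As $V_{U_1}\subset W$ with the same conformal vector, $U_1=(V_{U_1})_1\subseteq W_1$, and the hypothesis $\dim W_1=\dim \operatorname{Ker}(L_0-1_{\mathcal{H}_{\mathcal{B}}})=\dim U_1$ forces $W_1=U_1$, both as vector spaces and, since the bracket $a_{(0)}b$ is intrinsic and restricts to that of $V_{U_1}$, as semisimple Lie algebras. Thus $W$ is a strongly rational holomorphic $c=24$ VOA with weight-one Lie algebra isomorphic to $U_1$; by the uniqueness part of the classification of such VOAs \cite{EMS20,ELMS21} one has $W\cong U$, and bijectivity of the correspondence gives $\mathcal{B}=\A_{V_{U_1}}^W\cong\A_{V_{U_1}}^U$.

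The main obstacle is not the categorical correspondence, which is supplied by Corollary \ref{cor:extensions_notorious_VOAs}, but rather the two structural inputs that must be quoted from the literature on holomorphic $c=24$ VOAs: that a semisimple $U_1$ generates a tensor product of positive-integer-level affine VOAs whose conformal vector coincides with that of $U$ (so that $U$ really is a CFT type extension), and the deep classification result that such a VOA is determined up to isomorphism by the Lie algebra $U_1$. The one genuinely new bookkeeping step is translating the weight-one dimension condition on the net side into the VOA-level equality $W_1=U_1$ that triggers the classification; here care is needed to match the \emph{Lie algebra} structure, not merely the dimension.
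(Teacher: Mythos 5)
Your proposal is correct and follows essentially the same route as the paper: Schellekens/Dong--Mason structure theory realizes $U$ as a simple CFT type extension of the completely unitary, strongly local affine VOA $V_{U_1}$, Corollary \ref{cor:extensions_notorious_VOAs} then gives unitarity, the net $\A_{V_{U_1}}^U$, and the equivalence $\Repu(U)\cong\Repf(\A_{V_{U_1}}^U)$ (hence holomorphy and the weight-one dimension count), and uniqueness follows by matching the weight-one Lie algebra and quoting the classification. The only cosmetic difference is in the uniqueness step: you invoke the bijectivity of the correspondence in Corollary \ref{cor:extensions_notorious_VOAs} as a black box to produce $W$ with $\B\cong\A_{V_{U_1}}^W$, whereas the paper unpacks that same bijection concretely, building the VOA $U_\B$ on the finite-energy subspace of $\mathcal{H}_\B$ via the strong integrability functor and the Q-system attached to $\B$.
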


\begin{proof}
Recall from the discussion just preceding the statement of this theorem, that the strongly rational holomorphic VOAs $U$ with central  charge $c=24$ and with $U_1$ being a semisimple Lie algebra are completely classified and there exist exactly 69 such VOAs with corresponding weight-one subspaces as listed by Schellekens in \cite[Table 1]{Sch93}. Note also that the affine vertex subalgebra $V_{U_1}$ generated by $U_1$ is the tensor product of simple unitary affine VOAs, see e.g.\ \cite[Remark 5.7c]{Kac01}.
Consequently, $U$ is (completely) unitary with $V_{U_1}$ as unitary subalgebra by Corollary \ref{cor:extensions_notorious_VOAs}.
Moreover, for every such VOA $U$ there exists an irreducible conformal net $\A_{V_{U_1}}^U$ extending $\A_{V_{U_1}}$, acting on the Hilbert space completion $\mathcal{H}_U$ of $U$ and such that $\operatorname{dim}( \operatorname{Ker} (L_0-1_{\mathcal{H}_U}) )= \operatorname{dim} (U_1)$. 
We also have that $\A_{V_{U_1}}$ is completely rational and $\Repu(U)$ is equivalent to $\Repf(\A_{V_{U_1}}^U)$ as a unitary modular tensor category, so that $\A_{V_{U_1}}^U$ is holomorphic.

Concerning the uniqueness statement, let $\B$ be a holomorphic conformal net extension of $\A_{V_{U_1}}$, acting on a Hilbert space $\mathcal{H}_\B$ and satisfying $\operatorname{dim}( \operatorname{Ker} (L_0-1_{\mathcal{H}_\B}) )= \operatorname{dim} (U_1)$. Let $U_\B$ be the finite energy subspace of $\mathcal{H}_\B$, namely the linear span of the eigenvectors of the conformal Hamiltonian $L_0$ of the net $\B$. Note that $U_\B$ is dense on $\mathcal{H}_\B$ Then, through the *-functor $\mathfrak{F}$ of strong integrability, $U_\B$ is a $V_{U_1}$-module, which integrates to a representation of $\A_{V_{U_1}}$ on $\mathcal{H}_\B$. Then, the normalized haploid (hence standard) commutative Q-system associated to $\B$ in $\Repf(\A_{V_{U_1}})$ gives rise to a structure of simple unitary VOA on $U_\B$ by Corollary \ref{cor:extensions_notorious_VOAs}. Moreover, $U_\B$ is also holomorphic by the fact that 
$\Repu(U_\B)\cong \Repf(\A_{V_{U_1}}^{U_\B}) \cong \Repf(\B)$ by Corollary \ref{cor:extensions_notorious_VOAs}  . The Lie algebra 
$(U_\B)_1= \operatorname{Ker} (L_0-1_{\mathcal{H}_{\mathcal{B}}})$ contains a subalgebra isomorphic to $U_1$. Since by assumption 
 $\operatorname{dim}\left((U_\B)_1\right) =  \operatorname{dim}\left( U_1\right)$, then $(U_\B)_1$ and $U_1$ are isomorphic semisimple Lie algebras. Therefore, $U_\B$ must be isomorphic to $U$ by the classification result. It follows that $\B$ is isomorphic to 
 $\A_{V_{U_1}}^U$.
\end{proof}

We now discuss the Leech lattice case. Indeed, for entry 1 of \cite[Table 1]{Sch93}, i.e., the 24-dimensional abelian Lie algebra, there exists a unique up to isomorphism strongly rational holomorphic VOA with $c=24$, whose weight-one subspace is abelian and 24-dimensional. This is the Leech lattice VOA $V_\Lambda$, arising from the Leech lattice $\Lambda$. Clearly, $(V_\Lambda)_1$ generates a vertex subalgebra isomorphic to the tensor product of 24 copies of the \textbf{Heisenberg VOA} $M(1)$, see e.g.\ \cite[Section 3.5]{Kac01}. Recall that $M(1)$ is unitary, see \cite[Section 4.3]{DL14}, and strongly local, see \cite[Example 8.6]{CKLW18}, and so is the 24-times tensor product with itself $M(1)^{\otimes 24}$, see the penultimate row of Table \ref{table:prop_notorious_VOAs}. The irreducible conformal net associated to $M(1)$ is the well-known \textbf{free Bose chiral field net} $\A_{\Uone}$, see e.g.\ \cite[Section 1B]{BMT88}. Recall also that lattice VOAs are strongly local, see the third row of Table \ref{table:prop_notorious_VOAs}, and thus $V_\Lambda$ is too. Therefore, there already exists a Leech lattice holomorphic conformal net $\A_{V_\Lambda}$, coinciding with the one constructed in \cite[Section 3]{DX06}, see \cite[Theorem 2.7.11]{Gui20}, for which the following holds:

\begin{theo}  \label{theo:leech_uniqueness}
	Let $V_\Lambda$ be the Leech lattice VOA and $\A_{V_\Lambda}$ be the associated irreducible conformal net. If $\B$ is a holomorphic conformal net extension of $\A_{\Uone}^{\otimes 24}$, such that the conformal Hamiltonian $L_0$ of $\B$ on the vacuum Hilbert space $\mathcal{H}_\B$ satisfies $\operatorname{dim}( \operatorname{Ker} (L_0-1_{\mathcal{H}_\B}) )= 24$, then $\B$ is isomorphic to $\A_{V_\Lambda}$. Furthermore, $\Aut(\A_{V_\Lambda})=\Aut_{\scalar}(V_\Lambda)$.
\end{theo}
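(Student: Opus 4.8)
The plan is to prove the uniqueness statement by adapting the strategy of Theorem~\ref{theo:unitarity_holomorphicVOAs}, with the semisimple affine subalgebra replaced by the rank-$24$ Heisenberg VOA, and then to establish the equality of automorphism groups by a separate two-inclusion argument. Write $M := M(1)^{\otimes 24}$, which is simple, unitary and strongly local with $\A_M = \A_{\Uone}^{\otimes 24}$, and let $U_\B$ be the finite-energy subspace of $\mathcal{H}_\B$, i.e.\ the dense linear span of the $L_0$-eigenvectors of $\B$. Restricting $\B$ to the subnet $\A_{\Uone}^{\otimes 24}$ and using the strong integrability of the free bosons, $U_\B$ becomes an $M$-module containing $M$ itself as its vacuum sector, with the same conformal vector; in particular the eigenvalue-$1$ eigenspace of $L_0$ coincides with the weight-one space $(U_\B)_1$, which has dimension $24$ by hypothesis. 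Here the argument must depart from Theorem~\ref{theo:unitarity_holomorphicVOAs}, because $M$ is \emph{not} rational and Corollary~\ref{cor:extensions_notorious_VOAs} does not apply directly.

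Instead I would decompose $\mathcal{H}_\B$ into charged sectors of $\A_{\Uone}^{\otimes 24}$, which are labelled by charge vectors in $\R^{24}$. Since $\B$ has a unique vacuum and finite-dimensional energy eigenspaces, the set $L\subset\R^{24}$ of charges that occur is discrete; locality together with invariance of the vacuum force $L$ to be an even lattice and identify $U_\B$ with the lattice VOA $V_L$ and $\B$ with the associated lattice net $\A_{V_L}$ of \cite{DX06}, while holomorphy of $\B$ forces $L$ to be unimodular, hence an even unimodular lattice of rank $24$. From $U_\B\cong V_L$ the weight-one dimension equals $24 + \#\{\alpha\in L:(\alpha,\alpha)=2\}$; comparison with $\dim(U_\B)_1 = 24$ shows that $L$ has no roots. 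Since the Leech lattice $\Lambda$ is the unique rank-$24$ even unimodular lattice without roots \cite{CS99}, we obtain $L=\Lambda$, whence $U_\B\cong V_\Lambda$ and $\B\cong\A_{V_\Lambda}$.

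For the identity $\Aut(\A_{V_\Lambda}) = \Aut_{\scalar}(V_\Lambda)$ I would argue by two inclusions. The inclusion $\Aut_{\scalar}(V_\Lambda)\subseteq\Aut(\A_{V_\Lambda})$ is the general fact that a unitary automorphism of a simple unitary strongly local VOA extends to an automorphism of the associated conformal net, see \cite[Section~6]{CKLW18}. For the reverse inclusion, a net automorphism $g\in\Aut(\A_{V_\Lambda})$ fixes the vacuum and commutes with the rotation subgroup, hence with $L_0$; it therefore preserves every energy eigenspace and restricts to a unitary operator on the finite-energy space $V_\Lambda$, automatically preserving $\scalar$. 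Since the smeared vertex operators $Y(a,f)$, $a\in V_\Lambda$, are affiliated to the local algebras and generate them, and $g$ maps each $\A(I)$ onto itself while fixing the vacuum, one checks that $g\,Y(a,z)\,g^{-1} = Y(ga,z)$, so that $g|_{V_\Lambda}\in\Aut_{\scalar}(V_\Lambda)$; this is the net-to-VOA half of the correspondence of \cite{CKLW18}, complemented by the automorphism-group results proved in the appendix.

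The main obstacle is the construction in the non-rational setting: unlike the semisimple case, $\A_{\Uone}^{\otimes 24}$ is not completely rational, so the passage from the net extension $\B$ to the lattice VOA $V_L$ cannot be handled by the Q-system machinery of Section~\ref{sec:applications} and must instead be carried out through the charge decomposition and the classical correspondence between local extensions of the free-boson net and even lattices. Once $U_\B\cong V_L$ is in place, the no-roots condition together with the uniqueness of $\Lambda$ finishes the first claim, and the automorphism identity follows from the net-to-VOA analysis.
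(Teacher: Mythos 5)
Your proposal is correct and takes essentially the same route as the paper: identification of $\B$ with a lattice conformal net $\A_L$ for an even rank-$24$ lattice $L$ (the paper cites \cite{Sta95}, whose content is precisely the charge-decomposition argument you sketch), holomorphy $\Leftrightarrow$ unimodularity via \cite[Proposition 3.15]{DX06}, the weight-one dimension count to rule out roots, uniqueness of the Leech lattice from \cite{CS99}, and finally $\Aut(\A_{V_\Lambda})=\Aut_{\scalar}(V_\Lambda)$ by \cite[Theorem 6.9]{CKLW18}. The only difference is expository: where the paper cites \cite{Sta95} and \cite[Theorem 6.9]{CKLW18}, you unfold sketches of those results, but the underlying mathematics is identical.
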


\begin{proof}
Note that every conformal net extension of $\A_{\Uone}^{\otimes 24}$ is an even 24-dimensional lattice conformal net by \cite{Sta95}, cf.\ the beginning of \cite[Section 2]{KL06} and \cite[Section 3.4, p.\ 838]{Bis12}. Then, we can suppose that $\B$ is isomorphic to the conformal net $\A_L$ for some even 24-dimensional lattice $L$. Thanks to \cite[Proposition 3.15]{DX06}, $\B$ is holomorphic if and only if $L$ is unimodular. Moreover, the fact that $\operatorname{dim}( \operatorname{Ker} (L_0-1_{\mathcal{H}_\B}) )= 24$ implies that $L$ has no roots. Yet, it is known, see e.g.\ \cite{CS99}, that the Leech lattice is the unique even unimodular 24-dimensional lattice with no roots and thus $\B$ must be isomorphic to $\A_{V_\Lambda}$. Finally, $\Aut(\A_{V_\Lambda})=\Aut_{\scalar}(V_\Lambda)$ by \cite[Theorem 6.9]{CKLW18}.
\end{proof}

Theorem \ref{theo:leech_uniqueness} poses the natural question (often hard to settle) on whether the 69 VOAs appearing in Theorem \ref{theo:unitarity_holomorphicVOAs} are \emph{strongly local} too. Recall that the Moonshine VOA is known to be strongly local by \cite[Theorem 8.15]{CKLW18}. It turns out that all of them are indeed strongly local, as we show in the following:

\begin{theo} \label{theo:strong_locality_holomorphic_VOAs}
	Every strongly rational holomorphic VOA $U$ with central  charge $c=24$ and $U_1\not=\{0\}$ is strongly local. If $U_1$ is a semisimple Lie algebra, then $\A_{V_{U_1}}^U$ is isomorphic to $\A_U$ and $\Aut(\A_U)=\Aut_{\scalar}(U)$.
\end{theo}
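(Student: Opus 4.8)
The plan is to separate the two parts of the statement: first the \emph{strong locality} of $U$, which is the substantial point, and then the two identifications, which follow comparatively formally once strong locality is in hand. For the strong locality I would distinguish the abelian case, entry $1$ of \cite{Sch93}, from the semisimple case, entries $2$--$70$. If $U_1$ is $24$-dimensional abelian then $U$ is the Leech lattice VOA $V_\Lambda$, which is strongly local by \cite{CKLW18} (lattice VOAs are strongly local, cf.\ the third row of Table \ref{table:prop_notorious_VOAs}), so nothing new is needed. Thus the genuine work is for $U_1$ semisimple, and the two displayed identifications only concern that case anyway.

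For entries $2$--$70$ I would exploit the generalized deep hole construction \cite{MS20}, which realizes each such $U$ as a cyclic orbifold $U \cong V_\Lambda^{\mathrm{orb}(g)}$ of the (strongly local) Leech lattice VOA $V_\Lambda$ with respect to a finite-order automorphism $g$. The first step is to replace $g$ by a \emph{unitary} automorphism: using the unitarization result for compact automorphism groups proved in the appendix, the finite cyclic group $\langle g\rangle$ may be assumed to lie in $\Aut_{\scalar}(V_\Lambda)$. Then the fixed-point algebra $V^g := V_\Lambda^g$ is a \emph{unitary} subalgebra of $V_\Lambda$ in the sense of \cite[Section 5.4]{CKLW18}; being a unitary subalgebra of a strongly local VOA it is itself strongly local, and being a finite cyclic orbifold of a strongly rational VOA it is again strongly rational. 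Consequently $\A_{V^g}$ exists, is completely rational, and arises as the fixed-point subnet $(\A_{V_\Lambda})^g$. Both $V_\Lambda$ and $U$ are then simple CFT type extensions of the common strongly local subalgebra $V^g$.

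The crux, and the step I expect to be the main obstacle, is to promote strong locality from $V^g$ to the extension $U$, i.e.\ to show that the abstract irreducible conformal net extension of $\A_{V^g}$ attached to $U$ (which exists as a net by the Q-system correspondence, since $\A_{V^g}$ is completely rational) actually \emph{coincides} with the net generated directly by the smeared vertex operators of $U$. Here I would apply the completely unitary framework of Section \ref{sec:applications}: via the strong integrability functor and Theorem \ref{theo:equiv_VOA_net_extensions} for $V^g$, the simple CFT type VOA extensions of $V^g$ correspond bijectively to the irreducible conformal net extensions of $\A_{V^g}$, both governed by the same haploid standard commutative Q-system. Since $V_\Lambda$ realizes one such extension strongly locally, the extension corresponding to $U$ is realized on $\mathcal{H}_U$, its finite-energy subspace recovers $U$, and hence $U$ is strongly local with $\A_U$ equal to this extension. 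The delicate point is precisely the compatibility of the VOA-side and net-side extensions at the level of fields, namely the affiliation of the smeared generators of $U$ to the extension net; this is what the lattice realization via \cite{MS20}, together with the explicit control of the (energy-bounded, local) twisted vertex operators of $V_\Lambda$ and the equivalence $\Repu(V^g)\cong\Repf(\A_{V^g})$, is designed to supply, so that the strong locality criterion of \cite{CKLW18} can be verified.

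Once $U$ is known to be strongly local, the remaining two claims in the semisimple case are short. Since $V_{U_1}\subset U$ is a unitary subalgebra, $\A_{V_{U_1}}$ is a covariant subnet of $\A_U$, so $\A_U$ is a holomorphic irreducible conformal net extension of $\A_{V_{U_1}}$ acting on $\mathcal{H}_U$; its finite-energy space is $U$, whence $\operatorname{dim}\!\big(\operatorname{Ker}(L_0-1_{\mathcal{H}_U})\big)=\operatorname{dim}(U_1)$. By the uniqueness part of Theorem \ref{theo:unitarity_holomorphicVOAs} this forces $\A_U\cong\A_{V_{U_1}}^U$. Finally, $\Aut(\A_U)=\Aut_{\scalar}(U)$ follows from \cite[Theorem 6.9]{CKLW18} exactly as in the proof of Theorem \ref{theo:leech_uniqueness}, using that $U$ is a simple strongly local VOA of CFT type.
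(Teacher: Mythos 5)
Your outline correctly isolates the crux, but the way you propose to resolve it contains a genuine gap: you invoke Theorem \ref{theo:equiv_VOA_net_extensions} with base $V^g = V_\Lambda^G$, and that theorem requires $V^g$ to be \emph{completely unitary} and $\Repu(V^g)$ to be equivalent to $\Repf(\A_{V^g})$ as a unitary modular tensor category. Neither hypothesis is available for a cyclic orbifold of the Leech lattice VOA: such orbifolds are not among the models of Table \ref{table:prop_notorious_VOAs}, nor tensor products or regular cosets thereof (Corollary \ref{cor:extensions_notorious_VOAs}, Corollary \ref{cor:extensions_coset}), and establishing $\Repu(V^g)\cong\Repf(\A_{V^g})$ for them is an instance of the open conjectures recalled in the introduction, not a consequence of \cite{Gui20} or \cite{Ten19a}. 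So when you write that the lattice realization ``together with \dots the equivalence $\Repu(V^g)\cong\Repf(\A_{V^g})$'' supplies the needed compatibility, you are assuming exactly what has to be proven. Moreover, even if one granted the bijection between extensions, it would not yield strong locality of $U$: the Q-system machinery produces an \emph{abstract} irreducible conformal net extension of $\A_{V^g}$ whose finite-energy subspace carries a VOA structure isomorphic to $U$ (this is precisely how $\A_{V_{U_1}}^U$ is built in Theorem \ref{theo:unitarity_holomorphicVOAs}), whereas strong locality is the statement that the smeared vertex operators of $U$ itself generate a local net. That affiliation of fields to the net is the entire content of the theorem, and no categorical argument in the paper's toolkit produces it.

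The paper closes this gap by working at the level of fields rather than Q-systems. After using \cite{MS20} and the inverse orbifold automorphism $h\in\Aut(U)$, it shows via quantum Galois theory \cite{DM97,HMT99} that $U$ is \emph{generated} by $U^H$ and $V_{U_1}$; it proves energy bounds for $U$ by decomposing $U$ into finitely many $V_{U_1}$-modules and invoking the energy bounds for affine intertwining operators of \cite{Gui20}; it then proves that the von Neumann algebras generated on $\mathcal{H}_U$ by the smeared fields of $U^H$ form a \emph{local} net, by a commutator argument that combines linear energy bounds and strong commutativity of the weight-one fields \cite{CTW22}, a Reeh--Schlieder argument, the decomposition $U=\bigoplus_i V_\Lambda^{(i,0)}$ from \cite{MS21}, and \cite[Theorem 2]{DM99} to conclude that the kernel of any such commutator contains $U^H\cdot V_{U_1}=U$; only then does \cite[Theorem 8.1]{CKLW18} give strong locality of $U$. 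Your proposal treats all of this as a formality to be absorbed by the extension correspondence, which is where it fails. (For the record, the parts of your argument that do match the paper are: the reduction to semisimple $U_1$ via the Leech case, the use of the appendix (Proposition \ref{prop:unitary_structure_compact_aut_subgr}) to unitarize the relevant finite cyclic group, and the final paragraph deriving $\A_U\cong\A_{V_{U_1}}^U$ from the uniqueness part of Theorem \ref{theo:unitarity_holomorphicVOAs} and $\Aut(\A_U)=\Aut_{\scalar}(U)$ from \cite[Theorem 6.9]{CKLW18}.)
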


\begin{proof}
Recall that if $U$ is the Leech lattice VOA $V_\Lambda$, then it is strongly local. Accordingly, we can assume that $U_1$ is a semisimple Lie algebra. We split the proof into several steps. We are going to use the theory developed in \cite{CKLW18} throughout, and thus we refer to it for notation and details. 

\textit{First step:} 
we are going to find a set of generators for $U$ and to fix a suitable unitary structure on it.	
By \cite[Section 6.3]{MS20}, see in particular \cite[Theorem 6.6]{MS20} and its proof, there exists a \textit{generalized deep hole} $g\in\Aut(V_\Lambda)$, see \cite[Section 6.2]{MS20}, such that $U$ is isomorphic to the \textit{orbifold construction} $V_\Lambda^{\mathrm{orb}(g)}$, see \cite[Section 3.3]{MS20} and references therein. In particular, $g$ is of finite order and there exists an automorphism $h\in\Aut(U)$ of the same order as $g$, such that $V_\Lambda$ is isomorphic to the \textit{inverse} (or reverse) orbifold construction $U^{\mathrm{orb}(h)}$. Let $G$ and $H$ be the cyclic groups of order $n$ generated by $g$ and $h$ respectively. 
Consider the VOA inclusions $U^H\subset \widetilde{U}\subseteq U$, where $\widetilde{U}$ is generated by the fixed point subalgebra $U^H$ and by $V_{U_1}$. By \cite[Theorem 1]{DM97}, cf.\ also \cite[Theorem 1]{HMT99}, $\widetilde{U}=U^{\langle h^k\rangle}$ for some $k\in\Zpluseq$, which either is 0 or divides $n$. Here, $\langle h^k\rangle$ denotes the subroup of $H$ generated by $h^k$.
On the one hand, $(U^{\langle h^k\rangle})_1=U_1$ as $U_1\subset \widetilde{U}$. On the other hand, the order of the restriction of $h$ as automorphism of $U_1$ equals $n$, cf.\ the last paragraph of \cite[Section 3.2]{MS21}. Then, it must be $k=n$ and thus $\widetilde{U}=U$. 
By Theorem \ref{theo:unitarity_holomorphicVOAs}, $U$ is unitary.
By Proposition \ref{prop:unitary_structure_compact_aut_subgr}, there exists a unitary structure on $U$ making $U^H$ be a unitary subalgebra of $U$ and $h$ be a unitary automorphism. From now on, we keep fixed such unitary structure. Note also that any PCT operator $\theta$ on $U$ preserves the conformal vector and thus the $L_0$ grading too. Therefore, $V_{U_1}$ is a unitary subalgebra of $U$ for any unitary structure we choose on $U$ by \cite[Proposition 5.23]{CKLW18}. 

\textit{Second step:} 
we are going to prove that $U$ is \textit{energy-bounded} and that $U^H$ is strongly local. 
For the energy boundedness of $U$, note that 
$U$ is a simple unitary CFT type VOA extension of the affine VOA $V_{U_1}$ and that $U_1$ is a semisimple Lie algebra. Then, the result follows directly from \cite[Theorem 4.8]{CT} (in the ADE cases one could also use \cite[Theorem 4.7]{Gui20}).
Now, note that the fixed points subalgebras $V_\Lambda^G$ and $U^H$ are isomorphic. Therefore, by Proposition \ref{prop:unitary_structure_compact_aut_subgr} applied to $V_\Lambda$ with $G$ and by \cite[Proposition 7.6]{CKLW18}, $V_\Lambda^G$ is strongly local and thus $U^H$ is strongly local too as a consequence of 
\cite[Theorem 6.8]{CKLW18}.
As a consequence, we have two different nets of von Neumann algebras associated to $U^H$. On the one hand, we can consider the net $\B_{U^H}$ on $\mathcal{H}_U$ generated by the \textit{energy-bounded smeared vertex operators} of $U^H$ as unitary subalgebra of $U$ as in \cite[Eq.\ (114)]{CKLW18}. On the other hand, we can construct the irreducible conformal net $\A_{U^H}$ on $\mathcal{H}_{U^H}$ by strong locality.

As a preparation for the coming step, consider $\mathcal{H}_{U^H}$ as a subspace of $\mathcal{H}_U$ and denote by $E$ the corresponding orthogonal projection. Let $\J$ be the set of \textit{proper intervals} of the circle $S^1$. Clearly, $E\in \B_{U^H}(I)'$ for all $I\in \J$.

\textit{Third step:} 
we are going to prove that $E\B_{U^H}(I)E=\A_{U^H}(I)$ for all $I\in\J$, which implies that $\{E\B_{U^H}(I)E \mid I\in \J\}$ determines an irreducible conformal net on $\mathcal{H}_{U^H}$ by the locality of $\A_{U^H}$. 
Consider a smeared vertex operator $Y(a,f)$ affiliated to $\B_{U^H}(I)$, where $a\in U^H$ and $\mathrm{supp}f\subset I$ for some fixed $I\in\J$. It is easy to see that $EY(a,f)E=\widehat{Y}(a,f)$, where the $\widehat{\cdot}$ is used for smeared vertex operators affiliated to $\A_{U^H}(I)$. Consider the polar decomposition of $Y(a,f)$ as operator on  $\B_{U^H}(I)$, i.e., $Y(a,f)=V(a,f)T(a,f)$ with $T(a,f):=\abs{Y(a,f)}$. Similarly, $\widehat{Y}(a,f)=\widehat{V}(a,f)\widehat{T}(a,f)$. As $E$ commutes with $Y(a,f)$, it commutes also with $V(a,f)$ and $T(a,f)$. Then, we have that $\widehat{Y}(a,f)=(EV(a,f)E) (ET(a,f)E)$ and thus $\widehat{V}(a,f)=EV(a,f)E$ and $\widehat{T}(a,f)=ET(a,f)E$. In conclusion,
	$$
		\B_{U^H}(I)=\{V(a,f) \,,\, V(a,f)^* \,,\, e^{iT(a,f)} \mid a\in U^H \,,\, \mathrm{supp}f\subset I \}''	
	$$
and thus:
	$$
		\A_{U^H}(I)=\{EV(a,f)E \,,\, EV(a,f)^*E \,,\, Ee^{iT(a,f)}E \mid a\in U^H \,,\, \mathrm{supp}f\subset I \}''=E\B_{U^H}(I)E	\,.
	$$
By the arbitrariness of $I\in\J$, we have that $E\B_{U^H}(I)E=\A_{U^H}(I)$ for all $I\in\J$.

\textit{Fourth step:} 
we are going to prove that $\B_{U^H}(I)\subset \B_{U^H}(I')'$ for all $I\in \J$, i.e., the locality for $\B_{U^H}$. 
Let $I\in\J$. By M{\"o}bius covariance, it is enough to prove that if $I_1, I_2\in \J$ are such that $\overline{I_1}\subset I$ and $\overline{I_2}\subset I'$, then $\B_{U^H}(I_1)\subset \B_{U^H}(I_2)'$. Fix such $I_1,I_2\in\J$ and let $A_1\in \B_{U^H}(I_1)$ and $A_2\in\B_{U^H}(I_2)$. Let us denote by $M$ the subspace of $U$ defined by 
$M := U \cap \operatorname{Ker}([A_1,A_2])$.  From the third step, we have that $EA_iE\in \A_{U^H}(I_i)$ for all $i\in\{1,2\}$ and thus $[A_1,A_2]b=0$ for all $b\in U^H$. Thus, $U^H \subset M$.

Now, let $J_1$ and $J_2$ be two disjoint intervals and let $f,g\in C^\infty(S^1)$ be such that $\mathrm{supp}f\subset J_1$ and $\mathrm{supp}g\subset J_2$. If $a\in U_1$ is \textit{Hermitian}, i.e., $\theta(a)=-a$, then the smeared vertex operator $Y(a,f)$ is self-adjoint, whenever $f$ is real, because it satisfies the \textit{linear energy bounds}, see the proof of \cite[Proposition 6.3]{CKLW18} with \cite[Eq. (113)]{CKLW18} and  \cite[Proposition 2]{Nel72}. Moreover, $Y(a,f)$ strongly commutes with every $Y(b,g)$ whenever $b\in U$ by \cite[Lemma 3.6]{CTW22}. Let $J\in \J$ be disjoint from $I_1$ and $I_2$ and pick $f\in C^\infty(S^1)$ such that its support is contained in $J$. Therefore, $Y(a,f)$ commutes with elements of $\B_{U^H}(I_1)$ and of $\B_{U^H}(I_2)$, so that $[A_1,A_2]Y(a,f)b=Y(a,f)[A_1,A_2]b=0$ for all $b\in M$. Hence, $[A_1,A_2] Y(a,f)b = 0$ for all $b \in M$ and all $a \in U_1$ because 
any $a \in U_1$ is a linear combination of two Hermitian vectors in $U_1$.

Now, let $c\in U$ and let $\mathcal{K}_c$ be the closed subspace generated by vectors of type $Y(a,f)c$ with $a\in U_1$ and $f\in C^\infty(S^1)$ such that $\mathrm{supp}f$ is in $J$. By a Reeh-Schlieder argument as in the proof of \cite[Theorem 8.1]{CKLW18}, cf.\ also \textit{Step 2} in the proof of \cite[Theorem 5.2.1]{Gau21}, we can prove that $\mathcal{K}_c$ contains vectors of type $Y(a,f)c$ with $a\in U_1$ and $f\in C^\infty(S^1)$ (without any restriction on the support of $f$). This implies that $a_{(n)}c\in \mathcal{K}_c$ for all $a\in U_1$ and all $n\in \Z$. Consequently, $a_{(n)}b \in M$ for all $a\in U_1$, all $b \in M$ and all $n\in \Z$. By the fact that $U_1$ generates 
the affine vertex subalgebra $V_{U_1}$ and by the Borcherds formula, see \cite[Section 4.8]{Kac01}, it follows that $M$ is a
$V_{U_1}$-submodule of $U$ containing $U^H$, so that $M$ contains $V_{U_1} \cdot U^H$, the linear span of vectors of the form 
$a_{(n)}b$ with $a \in V_{U_1}$, $b \in U^H$ and $n \in \mathbb{Z}$. Using the skew-symmetry \cite[Section 4.2]{Kac01}, it is not difficult to check that $b_{(n)}a\in V_{U_1}\cdot U^H$ for all $b\in U^H$, all $n\in \Z$ and all $a\in V_{U_1}$. This implies that
$U^H\cdot  V_{U_1} \subset M$.  

Now, we come back  to the orbifold construction of $U$ from $V_\Lambda$. By \cite[Section 4, p.\ 11]{MS21}, $U$ is constructed as a direct sum of irreducible $U^H$-modules, i.e., $U = V_\Lambda^{\mathrm{orb}(g)}=\bigoplus_{i=0}^{n-1} V_\Lambda^{(i,0)}$, where $U^H=V_\Lambda^{(0,0)}$ and every $V_\Lambda^{(i,0)}$ is a simple current for $U^H$. Moreover, $h$ acts on every $V_\Lambda^{(i,0)}$
by a scalar $\chi_i \in \mathbb{C}$ and $\chi_i \neq \chi_j$ for $i \neq j$.

The VOA automorphisms of  $U$ leave $U_1$, and hence $V_{U_1}$, globally invariant. It follows that the action of $H$ on 
$U$ restricts to an action on $V_{U_1}$. The latter action is faithful because the order of $h\restriction_{U_1}$ equals the order of $h$, 
see again the last paragraph of \cite[Section 3.2]{MS21}. By \cite[Theorem 2]{DM97},  $V_\Lambda^{(i,0)} \cap V_{U_1} \neq \{ 0 \}$ for all $i$ and thus  $V_\Lambda^{(i,0)}  \subset U^H\cdot  V_{U_1}$ fo all $i$ so that $U \subset U^H\cdot  V_{U_1} \subset M$.  
It follows that $[A_1, A_2]=0$, proving the locality for $\B_{U^H}$.

\textit{Fifth step:} we are ready to move to the conclusion of the proof. 
By the fourth step together with \cite[Theorem 8.1]{CKLW18} and \cite[Lemma 3.6]{CTW22}, $\widetilde{U}$ is strongly local. Therefore, $U$ is strongly local as we have proved that $\widetilde{U}=U$ in the first step. 
Finally, by the uniqueness result of Theorem \ref{theo:unitarity_holomorphicVOAs}, $\A_{V_{U_1}}^U$ is isomorphic to $\A_U$ and thus $\Aut(\A_U)=\Aut_{\scalar}(U)$ by \cite[Theorem 6.9]{CKLW18}.
\end{proof}

\begin{rem}\label{rem: Aut_V Schellekens} If $V$ is a holomorphic VOA with $c=24$ and $V_1 \neq \{ 0\}$ then $\Aut(V)$ is described in \cite{BLS22}. Hence, by Theorem \ref{theo:strong_locality_holomorphic_VOAs} and Theorem  \ref{theo: unitary maximally compact}, the isomorphism class of $\Aut(\A_V)$ can be described  by any maximal compact subgroup of $\Aut(V)$.    
\end{rem}

We end this section with two remarks on the relation between the holomorphic conformal nets given by Theorem \ref{theo:unitarity_holomorphicVOAs} and the various constructions of some of them present in the literature.  The main point is that the strong locality proved in Theorem \ref{theo:strong_locality_holomorphic_VOAs} gives a uniform way to construct these models. 

\begin{rem}  \label{rem:other_holomorphic_conformal_net_constructions}
In \cite{LS15}, see in particular \cite[Remark 1.2]{LS15} for the corresponding entries in the Schellekens list, the holomorphic \emph{framed} VOAs with central charge $c=24$ were classified. There exist exactly 56 of such VOAs up to isomorphism. One of them is the Moonshine VOA \cite{FLM88}, see also \cite{DGH98} and \cite{Miy04}. By tensor categorical methods, corresponding holomorphic framed conformal nets with $c=24$ were constructed in \cite{KL06} and in \cite{KS14}. 
Actually, they must correspond to the holomorphic framed conformal nets, arising from the strong locality of the holomorphic framed VOAs with $c=24$, given by Theorem \ref{theo:strong_locality_holomorphic_VOAs} and \cite[Theorem 8.15]{CKLW18}.  Indeed, by Corollary \ref{cor:framed_chiral_CFTs} and  \cite{LS15}, there are exactly 56 isomorphism classes of framed holomorphic conformal nets with $c=24$.  By Theorem  \ref{theo:strong_locality_holomorphic_VOAs} and  \cite[Theorem 8.15]{CKLW18}, each framed holomorphic VOA with $c=24$ $V$ is strongly local. By Theorem \ref{theo:unitarity_holomorphicVOAs}, Theorem \ref{theo:strong_locality_holomorphic_VOAs},  \cite[Theorem 8.15]{CKLW18} and \cite[Theorem 3.6]{KL06}, each $\A_V$ is a holomorphic conformal net with $c=24$ which is framed by \cite[Theorem 7.5]{CKLW18}.
Moreover, it follows from \cite[Theorem 9.2]{CKLW18} that if $V$ and $W$ are simple strongly local unitary VOAs then $V$ and $W$ are isomorphic if and only if  $\A_V$ and $\A_W$ are isomorphic. Hence, if $\B$ is a framed holomorphic conformal net with $c=24$ then $\B$ is isomorphic to $\A_V$ for some framed holomorphic VOA $V$ with $c=24$. 
Note that Corollary \ref{cor:framed_chiral_CFTs} also gives back  the construction of the Moonshine conformal net given in \cite{KL06}.
\end{rem}

\begin{rem} \label{rem:other_holomorphic_conformal_net_constructions2}
Other models were constructed by the technique of mirror extension in \cite[Section 3.2]{Xu09} (cf.\ \cite{LL20} for a parallel VOA application of mirror extension). There, the author gives three holomorphic irreducible conformal net extensions of affine conformal nets, corresponding to entries 18, 27 and 40 of the Schellekens list, see \cite[Theorem 3.3]{Xu09}. By the uniqueness part of Theorem \ref{theo:unitarity_holomorphicVOAs}, these models must be isomorphic to the ones obtained using strong locality by Theorem \ref{theo:strong_locality_holomorphic_VOAs}. In particular, note that nos.\ 27 and 40 are framed models.   Moreover,  in \cite[Section 4]{Xu18}, the author gives three holomorphic conformal net extensions of the conformal nets associated to the affine VOAs corresponding to entries \ 9, 11 and 20 of the Schellekens list. By the uniqueness part of Theorem \ref{theo:unitarity_holomorphicVOAs}, these three models must be isomorphic to the ones obtained via strong locality by Theorem \ref{theo:strong_locality_holomorphic_VOAs}.  Actually, one could probably use the construction in \cite[Section 4]{Xu18} together with the generalized deep hole construction of strongly rational 
holomorphic VOAs with $c=24$ and non-zero weight-one subspace to obtain all the conformal nets obtained through Theorem \ref{theo:strong_locality_holomorphic_VOAs}, 
by a conformal net analogue of the orbifold construction from the Leech lattice conformal net constructed in \cite{DX06} and in \cite{KL06}. 
\end{rem}

\section{Classification of superconformal VOSAs}   \label{sec:superconformal_VOAs}

In this section, we discuss the classification of $N=0$, $N=1$ and $N=2$ superconformal VOSAs, by transferring the already known classification in the superconformal net setting, using the tools developed in Section \ref{sec:applications}.

First, we give an alternative proof of an already known classification result for $N=0$ superconformal VOAs, see \cite[Section 4]{DL15} and \cite[Corollary 2.22]{Gui21c}, namely the classification of simple CFT type extensions of the unitary Virasoro VOAs with central  charge $c<1$, see the first row of Table \ref{table:prop_notorious_VOAs}. 

\begin{theo}
	Every simple CFT type VOA  extension $U$ of a unitary Virasoro VOA with central  charge $c<1$ is unitary. Furthermore, these VOA extensions are in one-to-one correspondence with the irreducible conformal nets of the same central charge $c$, which are completely classified in \cite[Section 5]{KL04}.
\end{theo}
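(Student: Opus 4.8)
The plan is to read this statement off Corollary~\ref{cor:extensions_notorious_VOAs} together with the conformal net classification of \cite[Section 5]{KL04}. Let $V$ be the unitary Virasoro VOA of central charge $c<1$. This is the first entry of Table~\ref{table:prop_notorious_VOAs}, so $V$ is of CFT type, regular, completely unitary and strongly local; its associated conformal net $\A_V$ is the Virasoro conformal net of the same central charge $c$, which is completely rational; and $\Repu(V)$ is equivalent to $\Repf(\A_V)$ as a unitary modular tensor category. Thus all the hypotheses of Theorem~\ref{theo:unitarity_voa_extensions} and Theorem~\ref{theo:equiv_VOA_net_extensions} are met.

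With these properties in hand, the unitarity assertion is immediate from Theorem~\ref{theo:unitarity_voa_extensions}: since $V$ is completely unitary of CFT type, every simple CFT type extension $U$ of $V$ is automatically unitary (in fact completely unitary). Equivalently, this is the special case of Corollary~\ref{cor:extensions_notorious_VOAs} in which the tensor product consists of a single Virasoro factor, so the first assertion is already disposed of.

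For the correspondence, I would invoke the second part of Corollary~\ref{cor:extensions_notorious_VOAs} (which rests on Theorem~\ref{theo:equiv_VOA_net_extensions}): the strong integrability functor $\mathfrak{F}\colon\Repu(V)\to\Repf(\A_V)$ realizes a one-to-one correspondence between isomorphism classes of simple CFT type VOA extensions $U$ of $V$ and isomorphism classes of irreducible conformal net extensions $\A_V^U$ of $\A_V$. It then remains to identify the irreducible conformal net extensions of $\A_V$ with the irreducible conformal nets of central charge $c$ \emph{tout court}. For $c<1$ every irreducible conformal net of central charge $c$ contains the subnet generated by its conformal stress-energy tensor, which is precisely the Virasoro net $\A_V$; hence being an irreducible conformal net of central charge $c$ is the same as being an irreducible conformal net extension of $\A_V$ in the sense of the definition used in Theorem~\ref{theo:equiv_VOA_net_extensions}. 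These nets are completely classified in \cite[Section 5]{KL04}, which yields the stated correspondence.

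The only step that is not a direct citation of the machinery is this last identification, and it is also where the genuine input lies: one must know that the Virasoro subnet condition in the definition of conformal net extension is automatically satisfied for every $c<1$ net, so that \cite{KL04} really enumerates all irreducible conformal nets of that central charge and none is missed. Since the $c<1$ Virasoro net is canonically generated by the stress-energy tensor and embeds into every such net, this causes no difficulty, and the theorem follows.
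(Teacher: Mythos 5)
Your proposal is correct and follows essentially the same route as the paper, whose entire proof is the single line that the claim follows directly from Corollary~\ref{cor:extensions_notorious_VOAs}. Your additional care in identifying irreducible conformal nets of central charge $c<1$ with irreducible conformal net extensions of the Virasoro net (via the canonical Virasoro subnet generated by the stress-energy tensor) makes explicit a step the paper leaves implicit, and it is handled correctly.
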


\begin{proof}
The claim follows directly from Corollary \ref{cor:extensions_notorious_VOAs}.
\end{proof}

In a similar way, we have two classification results \cite[Section 7]{CKL08} and \cite[Section 6]{CHKLX15} for the $N=1$ and $N=2$ superconformal nets respectively. By definition, see e.g.\ \cite[Section 5.9]{Kac01}, the $N=1$ and $N=2$ superconformal VOSAs are extensions of the $N=1$ and $N=2$ \textbf{super-Virasoro VOSAs} respectively and we will consider the case where the corresponding super-Virasoro subalgebra are unitary with $c < 3/2$ and $c < 3$ respectively (the unitary discrete series).
Then, the crucial point is that the even part of these super-Virasoro VOSAs can be realized as regular cosets of certain VOAs in Table \ref{table:prop_notorious_VOAs}. Thanks to this, we are allowed to apply Corollary \ref{cor:extensions_coset} and get the desired classifications via Theorem \ref{theo:equiv_VOSA_net_super_extensions}.

\begin{theo} \label{theo:classification_N=1}
	Every simple CFT type VOSA extension $U$ of a unitary super-Virasoro VOA with central charge $c<\frac{3}{2}$  is a unitary VOSA. Furthermore, these $N=1$ superconformal VOSAs are in one-to-one correspondence with the irreducible superconformal nets of the same central charge $c$, which are completely classified in \cite[Section 7]{CKL08}.
\end{theo}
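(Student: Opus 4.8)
The plan is to exhibit this statement as a direct instance of Theorem \ref{theo:equiv_VOSA_net_super_extensions}, with $V$ the unitary $N=1$ super-Virasoro VOSA of central charge $c<\frac32$ in the discrete series and $\A$ the corresponding irreducible $N=1$ super-Virasoro graded-local conformal net. Since the super-Virasoro field $G$ has weight $\frac32$, the odd part $V_\parone$ is nonzero, so $V$ is a genuine simple VOSA of CFT type and the whole argument hinges on controlling its even part $V_\parzero$, which is an ordinary VOA.

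First I would recall the coset realization of $V_\parzero$. The $N=1$ super-Virasoro VOSA at $c=\frac32\bigl(1-\frac{8}{m(m+2)}\bigr)$ arises from a GKO-type coset built out of unitary affine $\mathfrak{sl}_2$ VOAs and free fermions, and its even part $V_\parzero$ is a \emph{regular} coset $V^c$ of a tensor product of VOAs appearing in Table \ref{table:prop_notorious_VOAs}. Granting this, Corollary \ref{cor:extensions_coset} applies verbatim to $V_\parzero$ and delivers at once that $V_\parzero$ is completely unitary and strongly local, that $\A_{V_\parzero}$ is completely rational, and that the strong integrability $*$-functor $\mathfrak{F}\colon\Repu(V_\parzero)\to\Repf(\A_{V_\parzero})$ is an equivalence of unitary modular tensor categories.

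Next I would verify the remaining hypotheses of Theorem \ref{theo:equiv_VOSA_net_super_extensions}. The even subnet $\A_\parzero$ of the $N=1$ super-Virasoro net is realized by the same coset construction on the operator-algebraic side, so $\A_{V_\parzero}\cong\A_\parzero$ and $\A_\parzero$ is completely rational. As in the proof of Theorem \ref{theo:equiv_VOSA_net_extensions}, the odd part $V_\parone$ determines a $\Z_2$-simple current in $\Repu(V_\parzero)$ with twist $-\one$, and the odd part of $\A$ determines a $\Z_2$-simple current in $\Repf(\A_\parzero)$ with the same twist; I would check that $\mathfrak{F}$ carries the former to the latter. With these hypotheses secured, Theorem \ref{theo:equiv_VOSA_net_super_extensions} yields a one-to-one correspondence between isomorphism classes of simple CFT type VOSA extensions of $V$ and isomorphism classes of irreducible graded-local conformal net extensions of $\A$; the latter are exactly the irreducible $N=1$ superconformal nets of central charge $c$, classified in \cite[Section 7]{CKL08}.

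The unitarity of any such extension $U$ is then immediate: since $V_\parzero\subset V\subset U$ all share the same conformal vector, $U$ is a simple CFT type VOSA extension of the completely unitary VOA $V_\parzero$, whence $U$ is a simple unitary VOSA by Corollary \ref{cor:unitarity_vosa_extensions}. The main obstacle is the model-specific input feeding the abstract machinery, namely the coset presentation of $V_\parzero$ by VOAs from Table \ref{table:prop_notorious_VOAs} with regular coset, together with the compatibility of the VOA and net coset realizations that is needed both for $\A_{V_\parzero}\cong\A_\parzero$ and for the identification of the two odd-part $\Z_2$-simple currents under $\mathfrak{F}$; once these are in place, Section \ref{sec:applications} supplies both the classification and the unitarity purely formally.
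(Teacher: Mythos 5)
Your proposal is correct and follows essentially the same route as the paper's proof: realize the even part $V_\parzero$ as a regular coset (the paper uses the GKO inclusion $V^{n+2}(\mathfrak{sl}(2,\C))\subset V^n(\mathfrak{sl}(2,\C))\otimes V^2(\mathfrak{sl}(2,\C))$, with regularity from \cite{CFL20}), apply Corollary \ref{cor:extensions_coset} to get complete unitarity, strong locality, complete rationality of $\A_{V_\parzero}\cong\A_\parzero$ and the modular tensor equivalence $\mathfrak{F}$, deduce unitarity of $U$ via Corollary \ref{cor:unitarity_vosa_extensions}, match the two odd-part $\Z_2$-simple currents under $\mathfrak{F}$, and conclude by Theorem \ref{theo:equiv_VOSA_net_super_extensions} together with the classification in \cite[Section 7]{CKL08}. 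The model-specific inputs you flag as needing verification are exactly the ones the paper supplies (and treats equally briefly), so there is no gap in substance.
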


\begin{proof}	
Fix $n\in\Zplus$ such that the central charge $c$ equals $c_n<\frac{3}{2}$ as in \cite{GKO98}. Let $V_\parzero$ be the even part of the $N=1$ super-Virasoro VOSA $V:=V^{c_n}(NS)$ with central  charge $c_n$. Note that $V_\parzero$ is simple and of CFT type as $V$ is, see Remark \ref{rem:simplicity_even_part}.
It follows from \cite[Section 3 and Section 4]{GKO98}, cf.\  \cite[pp.\ 1103–1104]{CKL08}, that $V_\parzero$ can be realized as the coset VOA of the VOA inclusion $V^{n+2}(\mathfrak{sl}(2,\C))\subset V^n(\mathfrak{sl}(2,\C))\otimes V^2(\mathfrak{sl}(2,\C))$. 
Moreover, $V_\parzero$ is regular, see \cite[Theorem 6.2 and Theorem 6.3]{CFL20}. It is also strongly local, see the last row of Table \ref{table:prop_notorious_VOAs}, and completely unitary by \cite[Theorem I]{Gui20}. Hence, $U$ has a unitary VOSA structure by Corollary \ref{cor:unitarity_vosa_extensions}.

Now, consider the \textit{$N=1$ super-Virasoro net} $\A:=\mathrm{SVir}_{c_n}$ with central  charge $c_n$, as constructed in \cite[Section 6.3]{CKL08}. It is not difficult to check that $\A_\parzero$ is isomorphic to $\A_{V_\parzero}$.  By Corollary \ref{cor:extensions_coset}, $\A_\parzero$ is completely rational and the strong integrability functor $\mathfrak{F}: \Repu(V_\parzero) \to \Repf(\A_\parzero)$ is an equivalence  of unitary modular tensor categories. Moreover, $\mathfrak{F}$ maps the equivalence class of the $\Z_2$-simple current defining $V$ in $\Repu(V_\parzero)$ into the equivalence class of the one defining $\A$ in $\Repf(\A_\parzero)$.
Then, the result follows from Theorem \ref{theo:equiv_VOSA_net_super_extensions}.
\end{proof}

In a similar fashion, we have:

\begin{theo}  \label{theo:classification_N=2}
	Every simple CFT type VOSA extension $U$ of a unitary $N=2$ super-Virasoro VOSA with central charge $c<3$  is a unitary VOSA. Furthermore, these $N=2$ superconformal VOSAs are in one-to-one correspondence with the irreducible superconformal nets of the same central charge $c$, which are completely classified in \cite[Section 6]{CHKLX15}.
\end{theo}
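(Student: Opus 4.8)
=== PROOF PROPOSAL ===

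The plan is to mirror exactly the structure of the proof of Theorem \ref{theo:classification_N=1}, substituting the $N=2$ data throughout. The entire argument reduces to verifying that the hypotheses of Theorem \ref{theo:equiv_VOSA_net_super_extensions} hold for the $N=2$ super-Virasoro VOSA $V$ and the $N=2$ super-Virasoro net $\A$ at the given central charge $c<3$, after which both the unitarity of $U$ and the claimed one-to-one correspondence follow formally.

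First I would fix $V$ to be the $N=2$ super-Virasoro VOSA $V^{c}(NS)$ in the unitary discrete series, with $c = c_n = \frac{3n}{n+2}$ for $n\in\Zplus$, and let $V_\parzero$ be its even part, which is simple and of CFT type by Remark \ref{rem:simplicity_even_part}. The key structural input is the Kazama--Suzuki style coset realization: the $N=2$ super-Virasoro algebra arises from the coset of $V^n(\mathfrak{sl}(2,\C)) \otimes V^1(\mathfrak{sl}(2,\C))$ (the latter factor supplying the charged free fermions / lattice piece realizing the $\mathfrak{u}(1)$), so that $V_\parzero$ is realized as a regular coset VOA built from the affine and lattice VOAs appearing in Table \ref{table:prop_notorious_VOAs}. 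Invoking \cite{CFL20} (or the analogous regularity results) I would assert that $V_\parzero$ is regular, and then Corollary \ref{cor:extensions_coset} delivers simultaneously that $V_\parzero$ is completely unitary and strongly local, that $\A_{V_\parzero}$ is completely rational, and that the strong integrability functor $\mathfrak{F}:\Repu(V_\parzero)\to\Repf(\A_{V_\parzero})$ is an equivalence of unitary modular tensor categories. Applying Corollary \ref{cor:unitarity_vosa_extensions} at this point already yields that $U$ carries a unitary VOSA structure.

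Next I would introduce the $N=2$ super-Virasoro net $\A := \mathrm{SVir}_{c_n}^{N=2}$ constructed in \cite{CHKLX15}, check that its even part $\A_\parzero$ is isomorphic to $\A_{V_\parzero}$, and verify that under the modular tensor equivalence $\mathfrak{F}$ the $\Z_2$-simple current defining $V$ in $\Repu(V_\parzero)$ is carried to the $\Z_2$-simple current defining $\A$ in $\Repf(\A_\parzero)$. With all hypotheses of Theorem \ref{theo:equiv_VOSA_net_super_extensions} in place, that theorem produces the bijection between isomorphism classes of simple CFT type VOSA extensions of $V$ and isomorphism classes of irreducible graded-local conformal net extensions of $\A$, and the latter are precisely the objects classified in \cite[Section 6]{CHKLX15}.

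The main obstacle will be the identification of the coset matching at the level of the defining $\Z_2$-simple currents: one must confirm that the odd part $V_\parone$, viewed as a $\Z_2$-simple current in $\Repu(V_\parzero)$ with twist $\omega = -\one$, corresponds under $\mathfrak{F}$ to the Neveu--Schwarz sector defining $\A$, and that the coset realization of $V_\parzero$ is compatible with the net-theoretic coset realization of $\A_\parzero$ (including the $\mathfrak{u}(1)$ charge lattice and the precise identification $\A_\parzero \cong \A_{V_\parzero}$). Once this dictionary is established the remainder is a verbatim transcription of the $N=1$ argument, so the substance of the proof lies entirely in setting up the $N=2$ coset/net correspondence correctly.
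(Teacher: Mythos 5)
Your proposal follows exactly the paper's route for Theorem \ref{theo:classification_N=2}: realize the even part $V_\parzero$ of the $N=2$ super-Virasoro VOSA as a regular coset of VOAs from Table \ref{table:prop_notorious_VOAs}, invoke Corollary \ref{cor:extensions_coset} to get complete unitarity, strong locality, complete rationality of $\A_{V_\parzero}$ and the unitary modular tensor equivalence $\mathfrak{F}$, deduce unitarity of $U$ from Corollary \ref{cor:unitarity_vosa_extensions}, identify $\A_\parzero\cong\A_{V_\parzero}$ together with the two $\Z_2$-simple currents, and conclude by Theorem \ref{theo:equiv_VOSA_net_super_extensions}. So the strategy is sound and is the paper's own. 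However, two of your concrete inputs are wrong as stated. First, the coset realization: the paper (following \cite[Section 5]{CHKLX15}, cf.\ \cite[Corollary 8.8]{CL19}) realizes $V$ as the coset VOSA of the inclusion $V_{L_{2(n+2)}}\subset V^n(\mathfrak{sl}(2,\C))\otimes F\otimes F$, with $F$ the real free fermion VOSA, so that $V_\parzero$ is the coset VOA of $V_{L_{2(n+2)}}\subset V^n(\mathfrak{sl}(2,\C))\otimes V_{L_4}$, since the even part of $F\otimes F$ is $V_{L_4}$. Your second factor $V^1(\mathfrak{sl}(2,\C))$ is \emph{not} the charged free fermion: $V^1(\mathfrak{sl}(2,\C))\cong V_{L_2}$, whereas $(F\otimes F)_\parzero\cong V_{L_4}$, and these are non-isomorphic $c=1$ lattice VOAs (with, e.g., different numbers of irreducible modules), so a coset inside $V^n(\mathfrak{sl}(2,\C))\otimes V^1(\mathfrak{sl}(2,\C))$ is not the Kazama--Suzuki realization of $V_\parzero$ and the subsequent identification with the net-theoretic coset would break down.

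Second, the regularity input. Corollary \ref{cor:extensions_coset} \emph{assumes} the coset is regular, so regularity must be supplied independently, and \cite{CFL20} is the reference for the $N=1$ diagonal coset used in Theorem \ref{theo:classification_N=1}; it does not cover the $N=2$ case. The paper instead first obtains regularity of the $N=2$ super-Virasoro VOSA $V$ itself from \cite[Theorem 7.2]{Ada04} (cf.\ the discussion after \cite[Corollary 8.8]{CL19}), and then deduces regularity of the even part $V_\parzero$ via the fixed-point/orbifold regularity results, see \cite[Section 4]{DNR21} based on \cite{Miy15} and \cite{CM18}. Your hedge ``or the analogous regularity results'' is precisely where this nontrivial substitution is needed; with these two corrections (the right ambient VOSA $V^n(\mathfrak{sl}(2,\C))\otimes F\otimes F$ with lattice subalgebra $V_{L_{2(n+2)}}$, and the Adamovi\'c--plus--orbifold regularity chain), the rest of your plan goes through verbatim as in the paper.
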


\begin{proof}
Fix $n\in\Zplus$ such that the central charge $c$ equals $c_n<3$ as in \cite{CHKLX15}. Let $V_\parzero$ be the even part of the $N=2$ super-Virasoro VOSA $V:=V^{c_n}(N2)$ with central  charge $c_n$. Note that $V_\parzero$ is simple and of CFT type as $V$ is, see Remark \ref{rem:simplicity_even_part}.
By \cite[Section 5]{CHKLX15}, cf. also \cite[Corollary 8.8]{CL19}, $V$ can be realized as the coset VOSA of the VOSA inclusion $V_{L_{2(n+2)}}\subset V^n(\mathfrak{sl}(2,\C))\otimes F\otimes F$, where $F$ is the real free fermion VOSA, see e.g.\ \cite[Section 3.6 and Proposition 4.10(b)]{Kac01}.
Therefore, $V_\parzero$ is equal to the coset VOA of the VOA inclusion $V_{L_{2(n+2)}}\subset V^n(\mathfrak{sl}(2,\C))\otimes V_{L_4}$, see also \cite[p.\ 1312]{CHKLX15} and \cite[Section 4.4.2]{CKM21}. (Note that the even part of $F\otimes F$ is the even rank-one lattice VOA $V_{L_4}$.) Furthermore, $V$ is regular by \cite[Theorem 7.2]{Ada04}, cf.\ the discussion after \cite[Corollary 8.8]{CL19}. It follows that $V_\parzero$ is regular too, see the claim in \cite[Section 4, p.\ 7789]{DNR21} based on \cite{Miy15} and \cite{CM18}. It is also strongly local, see the last row of Table \ref{table:prop_notorious_VOAs}, and completely unitary by \cite[Theorem I]{Gui20}. Hence, $U$ has a unitary VOSA structure by Corollary \ref{cor:unitarity_vosa_extensions}. 

Now, consider the \textit{$N=2$ super-Virasoro net} $\A:=\mathrm{SVir2}_{c_n}$ with central  charge $c_n$, as constructed in \cite[Section 3]{CHKLX15}. It is not difficult to check that $\A_\parzero$ is isomorphic to $\A_{V_\parzero}$. 
By Corollary \ref{cor:extensions_coset}, $\A_\parzero$ is completely rational and the strong integrability functor $\mathfrak{F}: \Repu(V_\parzero) \to \Repf(\A_\parzero)$ is an equivalence  of unitary modular tensor categories. Moreover, $\mathfrak{F}$ maps the equivalence class of the $\Z_2$-simple current defining $V$ in $\Repu(V_\parzero)$ into the equivalence class of the one defining $\A$ in $\Repf(\A_\parzero)$. Then, the result follows from Theorem \ref{theo:equiv_VOSA_net_super_extensions}.
\end{proof}

\appendix
\section{On the automorphism groups of unitary VOAs}
Let $V = \bigoplus_{n \in \mathbb{Z}}V_n $ be a VOA and let $\Aut(V)$ be its automorphism group. If $g \in \Aut(V)$, then for any $n \in \mathbb{Z}$,  $gV_n \subset V_n$ and the restriction 
$g\restriction_{V_n}$ belongs to $\mathrm{GL}(V_n)$. The map $g \mapsto \left( g\restriction_{V_n} \right)_{n \in \mathbb{Z}}$ gives an isomorphism of 
$\Aut(V)$ onto a closed subgroup of the direct product $\prod_{n \in \mathbb{Z}}\mathrm{GL}(V_n)$ and this makes $\Aut(V)$ into a metrizable topological group, cf.\
\cite[Section 4.3]{CKLW18}.

It is well-known that if $V$ is finitely generated, then $\Aut(V)$ is a finite dimensional Lie group. This can be seen as follows. If $V$ is finitely generated, there is a $N \in \mathbb{Z}_{\geq 0}$ such that $V$ is generated by the finite dimensional vector space $V_{\leq N} := \bigoplus_{n \in \mathbb{Z},\, n\leq N} V_n$ and the map $g \mapsto g\restriction_{V_{\leq N}}$ is a topological isomorphism of $\Aut(V)$ onto a closed subgroup of $\mathrm{GL}(V_{\leq N})$. It follows that   if $V$ is finitely generated then $\Aut(V)$ is a finite dimensional Lie group. 
Actually, if $V$ is finitely generated, the image of the restriction map  $g \mapsto g\restriction_{V_{\leq N}}$ is Zariski closed in $\mathrm{GL}(V_{\leq N})$ 
and hence $\Aut(V)$ is isomorphic to a complex linear algebraic group, cf.\ \cite[Theorem 2.4]{DG02}.  As a consequence, if $\Aut(V)_0$ denotes the connected component of the identity in $\Aut(V)$, then the quotient group $\Aut(V)/\Aut(V)_0$ is finite, cf.\ \cite[Section 1]{DG02}.  Note that every strongly rational VOA is finitely generated by \cite{GN03} or by \cite{DZ08}.

As usual, if $V$ is a VOA and $H$ is a subgroup of  $\Aut(V)$, we will denote by $V^H$ the fixed point subalgebra of $V$, namely 
$V^H=\{a \in V \mid \forall g \ (g\in H \Rightarrow  ga=a ) \}$.  

From now on, $V$ will denote a simple unitary VOA with invariant normalized scalar product $(\cdot |\cdot)$ and PCT operator $\theta$, see \cite[Chapter 5]{CKLW18}.
The unitary automorphism group $\Aut_{(\cdot |\cdot)}(V)\subset \Aut(V)$ is defined by 
\begin{equation} \label{eq:unitaryAut}
\Aut_{(\cdot |\cdot)}(V) := \{ g  \in \Aut(V) \mid  \forall a (a \in V \Rightarrow (ga|ga) = (a|a) ) \} .
\end{equation}
It is a compact subgroup of $\Aut(V)$, see \cite[Section 5.3]{CKLW18}. 

We say that $g \in \Aut(V)$ is {\bf strictly positive} if $(a|ga) > 0$ for all $a \in V\setminus \{ 0 \}$ and denote by 
$\Aut_{+}(V)$ the set of strictly positive elements of $\Aut(V)$. We have that $1_V \in \Aut_{+}(V)$ and we say that   
$\Aut_{+}(V)$ is trivial if $ \Aut_{+}(V)  = \{1_V \}$. If $g \in \Aut_{+}(V)$  and $\alpha \in \mathbb{C}$, then 
$g^{\alpha}$ is a well defined element of $\Aut(V)$ and the map $\mathbb{C} \ni \alpha \mapsto g^\alpha = e^{\alpha \log g } \in \Aut(V)$ is a continuous group homomorphism, cf.\ \cite[Section 5.3]{CKLW18}. It is not difficult to show that, for any $\alpha \in \mathbb{C}$, the map  $\Aut_{+}(V) \ni g \mapsto g^\alpha \in \Aut(V)$ is continuous. Moreover,  $g^t \in \Aut_{+}(V)$ for all $t \in \mathbb{R}$. 
 It follows that $\Aut_{+}(V)$ is a path connected subset of $\Aut(V)$ containing $1_V$ and hence $\Aut_{+}(V) \subset \Aut(V)_{0}$. 
Note that $\Aut_{+}(V)$ is not in general a subgroup of  $\Aut(V)$. However, $g^{-1} \in \Aut_{+}(V)$ for all $g \in \Aut_{+}(V)$ 
and, if $g$ and  $h$ are commuting elements of  $\Aut_{+}(V)$ then $gh= h^{\frac12}gh^{\frac12} \in \Aut_{+}(V)$. 
It follows that if  $\Aut(V)_{0}$ is abelian, then  $\Aut_{+}(V)$ is an abelian closed and connected subgroup of $\Aut(V)$. It is clear that if 
$\Aut(V)_{0}$ is trivial, then $\Aut_{+}(V)$  is also trivial. The converse is also true as a consequence of \cite[Theorem 5.21]{CKLW18}.

For $g \in \Aut(V)$ we set $g^*= \theta g^{-1} \theta \in \Aut(V)$. Then, $(a|gb) = (g^*a|b)$ for all $a,b \in V$. In particular, 
$g^*g \in   \Aut_{+}(V)$ for all $g \in \Aut(V)$ and an automorphism $g \in \Aut(V)$ is unitary, i.e.\ it belongs to  $\Aut_{(\cdot |\cdot)}(V)$, if and only if $g^* = g^{-1}$.  The next proposition shows that the elements of $\Aut(V)$ admit a polar decomposition. 

\begin{prop} \label{prop:polar} Let $V$ be a simple unitary VOA and let $g \in  \Aut(V)$. Then, there is a unique $|g| \in   \Aut_{+}(V)$ and a unique $u \in \Aut_{(\cdot |\cdot)}(V)$ such that $g = u |g|$.   
\end{prop}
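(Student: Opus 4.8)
The plan is to reproduce the operator-theoretic polar decomposition inside $\Aut(V)$, using the adjoint $g^* = \theta g^{-1}\theta$ recalled above. First I would record the two facts that make the argument run. Since the grading of $V$ is orthogonal and each $V_n$ is finite dimensional, the relation $(a|gb) = (g^*a|b)$ exhibits $g^*$ as the Hilbert space adjoint of $g$ on the completion $\mathcal{H}_V$ (blockwise on each $V_n$); and, by the discussion preceding the statement, $g^*g \in \Aut_+(V)$, where I use that the elements of $\Aut_+(V)$ are positive self-adjoint (invertible) operators, closed under the real powers $h\mapsto h^t$, $t\in\mathbb{R}$. This self-adjointness is exactly what is implicitly used in the identity $gh = h^{1/2}gh^{1/2}$ for commuting positive elements, and I would cite it from \cite[Section 5.3]{CKLW18}.

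Granting this, I would set $|g| := (g^*g)^{1/2}$, the positive self-adjoint square root of $g^*g$; taking $t=1/2$ in the real-power property gives $|g| \in \Aut_+(V)$, and $|g|$ is invertible with $|g|^* = |g|$ and $|g|^2 = g^*g$. Then I define $u := g|g|^{-1} \in \Aut(V)$ and check that $u$ is unitary: using $|g|^* = |g|$ one computes $u^*u = |g|^{-1}g^*g|g|^{-1} = |g|^{-1}|g|^2|g|^{-1} = 1_V$ and likewise $uu^* = g|g|^{-2}g^* = g(g^*g)^{-1}g^* = 1_V$, so that $u^* = u^{-1}$, i.e.\ $u \in \Aut_{(\cdot|\cdot)}(V)$. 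By construction $g = u|g|$, which gives existence.

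For uniqueness, suppose $g = u_1 h_1$ with $u_1 \in \Aut_{(\cdot|\cdot)}(V)$ and $h_1 \in \Aut_+(V)$. Since $h_1$ is self-adjoint and $u_1^*u_1 = 1_V$, I get $g^*g = h_1^* u_1^* u_1 h_1 = h_1^2$. Both $h_1$ and $|g|$ are positive self-adjoint operators squaring to the same positive operator $g^*g$, so by uniqueness of the positive square root $h_1 = |g|$, and then $u_1 = g h_1^{-1} = g|g|^{-1} = u$.

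The only genuinely delicate point is the identification of $\Aut_+(V)$ with the positive self-adjoint part of $\Aut(V)$: I must know that $(g^*g)^{1/2}$, a priori defined by functional calculus on $\mathcal{H}_V$, actually lands in $\Aut(V)$ and agrees there with the group-theoretic power of the strictly positive automorphism $g^*g$, and that every element of $\Aut_+(V)$ is self-adjoint so that the step $g^*g = h_1^2$ in the uniqueness argument is legitimate. Both are furnished by the properties of $\Aut_+(V)$ recalled from \cite[Section 5.3]{CKLW18}; everything else is the routine finite-dimensional polar decomposition carried out blockwise on the $V_n$.
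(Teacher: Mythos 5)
Your proof is correct and follows essentially the same route as the paper: set $|g| := (g^*g)^{1/2}$, $u := g|g|^{-1}$, and for uniqueness use $g^*g = h^2$ together with uniqueness of the strictly positive square root. The paper leaves the existence verifications as ``straightforward,'' whereas you spell them out (including the self-adjointness of elements of $\Aut_+(V)$ and the fact that real powers stay in $\Aut_+(V)$, both from \cite[Section 5.3]{CKLW18}), which is exactly the content the paper implicitly relies on.
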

\begin{proof} Set $|g| := (g^*g)^{\frac12}$ and $u := g |g|^{-1}$. Then,  it is straightforward to see that  $|g| \in   \Aut_{+}(V)$ and $u \in \Aut_{(\cdot |\cdot)}(V)$. In order to prove the uniqueness part let us assume that $g=vh$ with $v$ a unitary automorphism and 
$h$ a strictly positive automorphism. Then, 
$g^*g = hv^*vh = h^2$ so that $h = (g^*g)^{\frac12} = |g|$ and hence $v=gh^{-1} = u$.  
 \end{proof}
 
We have the following consequence: 
 
 \begin{prop}  \label{prop: quotient Aut} 
 Let $V$ be a simple unitary VOA  and let $q:\Aut(V) \to \Aut(V)/\Aut(V)_0$ be the quotient map. 
 Then $q\restriction_{\Aut_{(\cdot |\cdot)}(V)}: \Aut_{(\cdot |\cdot)}(V) \to  \Aut(V)/\Aut(V)_0$ is a surjective group homomorphism. 
 In particular $\Aut(V)$ is almost connected, i.e.\  $\Aut(V)/\Aut(V)_0$ is compact. Moreover, if $\Aut(V)$ is a finite dimensional Lie group, 
 then $\Aut(V)/\Aut(V)_0$ is a finite group and the map $q\restriction_{\Aut_{(\cdot |\cdot)}(V)}: \Aut_{(\cdot |\cdot)}(V) \to  \Aut(V)/\Aut(V)_0$ factors through an isomorphism of   $\Aut_{(\cdot |\cdot)}(V) / \Aut_{(\cdot |\cdot)}(V)_0$ onto  $\Aut(V)/\Aut(V)_0$.  
   \end{prop}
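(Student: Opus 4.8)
The plan is to derive everything from the polar decomposition of Proposition \ref{prop:polar}, together with the two facts recalled just above it, namely that $\Aut_{+}(V) \subset \Aut(V)_0$ and that $\Aut_{\scalar}(V)$ is compact. First I would note that $q\restriction_{\Aut_{\scalar}(V)}$ is a group homomorphism simply as the restriction of the quotient homomorphism $q$ to the subgroup $\Aut_{\scalar}(V)$. For surjectivity, given any $g \in \Aut(V)$ I would write its polar decomposition $g = u|g|$ with $u \in \Aut_{\scalar}(V)$ and $|g| \in \Aut_{+}(V)$. Since $\Aut_{+}(V) \subset \Aut(V)_0$, the class of $|g|$ is trivial in $\Aut(V)/\Aut(V)_0$, whence $q(g) = q(u)$. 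As $g$ was arbitrary, $q\restriction_{\Aut_{\scalar}(V)}$ is onto.

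For the almost connectedness, I would observe that $\Aut(V)/\Aut(V)_0$ is the image of the compact group $\Aut_{\scalar}(V)$ under the continuous surjection $q\restriction_{\Aut_{\scalar}(V)}$, hence compact.

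The substance is the last claim. Assuming $\Aut(V)$ is a finite dimensional Lie group, $\Aut(V)_0$ is open, so $\Aut(V)/\Aut(V)_0$ is discrete; being also compact by the previous step, it is finite. By the first isomorphism theorem, $q\restriction_{\Aut_{\scalar}(V)}$ induces an isomorphism of $\Aut_{\scalar}(V)/K$ onto $\Aut(V)/\Aut(V)_0$, where $K := \ker\big(q\restriction_{\Aut_{\scalar}(V)}\big) = \Aut_{\scalar}(V) \cap \Aut(V)_0$, so it remains to identify $K$ with $\Aut_{\scalar}(V)_0$. The inclusion $\Aut_{\scalar}(V)_0 \subseteq K$ is immediate, since $\Aut_{\scalar}(V)_0$ is connected, contains $1_V$, and lies in both $\Aut_{\scalar}(V)$ and $\Aut(V)_0$. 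For the reverse inclusion I would take $u \in K$ and, using that the identity component of a Lie group is path-connected, choose a path $\gamma:[0,1]\to\Aut(V)_0$ from $1_V$ to $u$. Applying the polar decomposition pointwise gives $\gamma(t) = u(t)\,|\gamma(t)|$, and the key point is that $t \mapsto u(t) = \gamma(t)\,|\gamma(t)|^{-1}$ is continuous; this follows from continuity of $g \mapsto g^*$, of the square root $g \mapsto g^{\tfrac12}$ on $\Aut_{+}(V)$ (the case $\alpha = \tfrac12$ of the continuity of $g\mapsto g^\alpha$ recalled before Proposition \ref{prop:polar}), and of inversion and multiplication. Since $\gamma(0) = 1_V$ and $\gamma(1) = u$ are unitary, their positive parts are $1_V$ by uniqueness of the decomposition, so $u(0) = 1_V$ and $u(1) = u$; thus $u(\cdot)$ is a path inside $\Aut_{\scalar}(V)$ joining $1_V$ to $u$, proving $u \in \Aut_{\scalar}(V)_0$. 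This yields $K = \Aut_{\scalar}(V)_0$ and the desired factorization.

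I expect the main obstacle to be precisely this reverse inclusion $K \subseteq \Aut_{\scalar}(V)_0$: one must verify that the unitary part of a path depends continuously on the path, and that the positive parts of the unitary endpoints are trivial. The former rests entirely on the continuity of the functional calculus on $\Aut_{+}(V)$ recorded before Proposition \ref{prop:polar}, while the parts $1$ and $2$ are formal consequences of polar decomposition and compactness.
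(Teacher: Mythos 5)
Your proof is correct and follows essentially the same route as the paper's: surjectivity via the polar decomposition $g=u|g|$ with $\Aut_{+}(V)\subset\Aut(V)_0$, finiteness from discreteness plus compactness, and the key inclusion $\Aut(V)_0\cap\Aut_{\scalar}(V)\subseteq\Aut_{\scalar}(V)_0$ established by applying the polar decomposition pointwise along a path in $\Aut(V)_0$ and using continuity of $g\mapsto g^{\frac12}$ on $\Aut_{+}(V)$. Your phrasing via the first isomorphism theorem and the explicit justification of the continuity of $t\mapsto g(t)|g(t)|^{-1}$ only spell out details the paper leaves implicit.
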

   \begin{proof} Let $g \in \Aut(V)$. By Proposition \ref{prop:polar} $g= u|g|$ with $|g| \in   \Aut_{+}(V)$ and $u \in \Aut_{(\cdot |\cdot)}(V)$.
   Hence, since $\Aut_+(V) \subset \Aut(V)_0$, $q(g) = q(u) \in q (  \Aut_{(\cdot |\cdot)}(V) )$ and hence $q\restriction_{\Aut_{(\cdot |\cdot)}(V)}$ is surjective. 
   Now, assume that $\Aut(V)$ is a finite dimensional Lie group. Then, $\Aut(V)_0$ is a path connected open subgroup of 
   $\Aut(V)$. Hence, $\Aut(V)/\Aut(V)_0$, being discrete and compact must be finite. Moreover, since  
   $\Aut_{(\cdot |\cdot)}(V)_0 \subset \Aut(V)_0$,  the map $q\restriction_{\Aut_{(\cdot |\cdot)}(V)}: \Aut_{(\cdot |\cdot)}(V) \to  \Aut(V)/\Aut(V)_0$ factors through an homomorphism of   $\Aut_{(\cdot |\cdot)}(V) / \Aut_{(\cdot |\cdot)}(V)_0$ onto  $\Aut(V)/\Aut(V)_0$. In order to prove that the latter homomorphism is injective we have to show that  $\Aut(V)_0 \cap  \Aut_{(\cdot |\cdot)}(V) \subset 
   \Aut_{(\cdot |\cdot)}(V)_0$.  Let $u \in  \Aut(V)_0 \cap  \Aut_{(\cdot |\cdot)}(V)$. Since $\Aut(V)_0$ is path connected, there is a continuous path $[0,1] \ni t \mapsto g(t) \in \Aut(V)$ with $g(0) = 1_V$ and $g(1) = u$. Then, $[0,1] \ni t \mapsto u(t) := g(t)|g(t)|^{-1} 
   \in \Aut_{(\cdot |\cdot)}(V)$ is a continuous path with $u(0) = 1_V$ and $u(1) = u$. Hence, $u \in   \Aut_{(\cdot |\cdot)}(V)_0$.       
    \end{proof}

  \begin{prop} \label{prop: unitary fixed point} Let $V$ be a simple unitary VOA. Then, 
  $V^{\Aut_{(\cdot |\cdot)}(V)} = V^{\Aut(V)}$.  \end{prop}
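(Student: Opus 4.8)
The plan is to prove the two inclusions separately, the only nontrivial content being $V^{\Aut_{\scalar}(V)} \subseteq V^{\Aut(V)}$. The inclusion $V^{\Aut(V)} \subseteq V^{\Aut_{\scalar}(V)}$ is immediate from $\Aut_{\scalar}(V) \subseteq \Aut(V)$, so a vector fixed by all automorphisms is in particular fixed by all unitary ones.

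For the reverse inclusion I would fix $a \in V^{\Aut_{\scalar}(V)}$ and an arbitrary $g \in \Aut(V)$, and aim to show $ga = a$. The natural tool is the polar decomposition of Proposition \ref{prop:polar}: writing $g = u|g|$ with $u \in \Aut_{\scalar}(V)$ and $|g| \in \Aut_{+}(V)$, and setting $h := |g|$, the identity $ga = u(ha)$ reduces the problem to showing that $a$ is fixed by the strictly positive automorphism $h$. Since $u$ is unitary and fixes $a$, once $ha = a$ is established we obtain $ga = u(ha) = ua = a$.

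The key step, and the real point of the argument, is to show that every strictly positive automorphism fixes $a$. Here I would exploit that a strictly positive $h$ is a positive, self-adjoint (the condition $(b|hb)>0$ for all $b\neq 0$ forces $h=h^*$), grading-preserving invertible operator on $V$ with respect to $\scalar$, so that $\log h$ is a well-defined self-adjoint operator and $h^{\alpha} = e^{\alpha \log h} \in \Aut(V)$ for every $\alpha \in \C$, as recorded in the discussion preceding Proposition \ref{prop:polar}. In particular, for real $t$ the operator $h^{it} = e^{it \log h}$ is \emph{unitary}, hence $h^{it} \in \Aut_{\scalar}(V)$, and therefore $h^{it}a = a$ for all $t \in \R$. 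Because $h$ preserves each finite-dimensional homogeneous subspace $V_n$, the curve $t \mapsto h^{it}a$ is smooth; differentiating at $t = 0$ yields $(\log h)a = 0$, whence $ha = e^{\log h}a = a$. (Equivalently, one may observe that $\alpha \mapsto h^{\alpha}a$ is entire and that $h^{\alpha}a - a$ vanishes on the imaginary axis, forcing $h^{\alpha}a = a$ for all $\alpha$, in particular at $\alpha = 1$.)

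I expect the only genuine subtlety to be the clean justification that imaginary powers of a strictly positive automorphism are unitary automorphisms, i.e.\ that the operator-theoretic unitarity of $e^{it\log h}$ combines with its membership in $\Aut(V)$ to place it in $\Aut_{\scalar}(V)$; everything else is formal. Once that observation is in place, the reduction via polar decomposition together with the differentiation (or analyticity) argument closes the proof, since $g \in \Aut(V)$ was arbitrary.
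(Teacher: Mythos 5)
Your proposal is correct and follows essentially the same route as the paper's own proof: the trivial inclusion, the polar decomposition $g = u|g|$ from Proposition \ref{prop:polar}, the observation that $|g|^{it}$ is unitary (hence fixes $a$) for all real $t$, and differentiation at $t=0$ to conclude $\log|g|\,a = 0$, so $|g|a = a$ and $ga = ua = a$. The extra care you take in justifying that imaginary powers of a strictly positive automorphism lie in $\Aut_{\scalar}(V)$, and the alternative analyticity argument, are fine elaborations of exactly the steps the paper uses.
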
 
  \begin{proof} Clearly $V^{\Aut(V)} \subset V^{\Aut_{(\cdot |\cdot)}(V)}$. Now, let $a \in   V^{\Aut_{(\cdot |\cdot)}(V)}$ and let 
  $g= u|g|$ be any element of $\Aut(V)$. Since $|g|^{it}$ is unitary for all $t\in \mathbb{R}$, we have  $|g|^{it}a =a$ for all 
  $t\in \mathbb{R}$. 
  By taking the derivative at $t=0$ of the latter equality we get $\log |g| \ a = 0$ and hence $|g| a = e^{\log |g|} a = a$, so $ga=ua=a$ and 
  $a \in V^{\Aut(V)}$.      
  \end{proof} 

\begin{theo} \label{theo: unitary maximally compact} Let $V$ be a simple unitary VOA and assume that $\Aut(V)$ is a finite dimensional Lie group. Then,  $\Aut_{(\cdot |\cdot)}(V)$ is a maximal compact subgroup of $\Aut(V)$. 
\end{theo}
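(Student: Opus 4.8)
The plan is to exploit the polar decomposition of Proposition \ref{prop:polar} as a global Cartan-type decomposition $\Aut(V) = \Aut_{(\cdot |\cdot)}(V)\cdot \Aut_{+}(V)$ and to show that the only strictly positive automorphism lying inside a compact subgroup is the identity. Since $\Aut_{(\cdot |\cdot)}(V)$ is already known to be compact \cite[Section 5.3]{CKLW18}, maximality amounts to proving that any compact subgroup $H$ with $\Aut_{(\cdot |\cdot)}(V)\subseteq H\subseteq \Aut(V)$ satisfies $H=\Aut_{(\cdot |\cdot)}(V)$.

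First I would take an arbitrary $h\in H$ and apply Proposition \ref{prop:polar} to write $h=u|h|$ with $u\in \Aut_{(\cdot |\cdot)}(V)$ and $|h|\in \Aut_{+}(V)$. Since $u\in \Aut_{(\cdot |\cdot)}(V)\subseteq H$ and $H$ is a group, $|h|=u^{-1}h\in H$; thus $p:=|h|$ is a strictly positive automorphism contained in the compact group $H$. It therefore suffices to prove the key lemma: a strictly positive $p\in\Aut_{+}(V)$ that lies in a compact subgroup of $\Aut(V)$ must equal $1_V$. Granting this, $h=u\in \Aut_{(\cdot |\cdot)}(V)$, hence $H=\Aut_{(\cdot |\cdot)}(V)$, which is the desired maximality.

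For the key lemma I would argue gradewise. Recall that $\Aut(V)$ embeds as a closed subgroup of $\prod_{n}\mathrm{GL}(V_n)$ with each $V_n$ finite dimensional, and that for $p\in\Aut_{+}(V)$ the condition $(a|pa)>0$ for all $a\neq 0$ forces $p\restriction_{V_n}$ to be a positive self-adjoint invertible operator on the finite-dimensional inner-product space $V_n$, for every $n$. Because $p\in H$ and $H$ is a group, $p^m\in H$ for all $m\in\Z$; compactness of $H$ then makes $\{p^m\restriction_{V_n}:m\in\Z\}$ a relatively compact, hence bounded, subset of $\mathrm{GL}(V_n)$. The eigenvalues of $p\restriction_{V_n}$ are positive reals $\lambda$, and boundedness of $\{\lambda^m:m\in\Z\}$ forces $\lambda=1$; since $p\restriction_{V_n}$ is diagonalizable, $p\restriction_{V_n}=1_{V_n}$ for every $n$, whence $p=1_V$. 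Equivalently, writing $p=e^{\log p}$ with $\log p$ self-adjoint, $p^m=e^{m\log p}$ stays bounded for all $m\in\Z$ only if $\log p=0$.

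Finally I would record that the standing hypothesis that $\Aut(V)$ is a finite-dimensional Lie group is what makes the statement meaningful in the standard sense: by Proposition \ref{prop: quotient Aut} the quotient $\Aut(V)/\Aut(V)_0$ is finite, so $\Aut(V)$ is a Lie group with finitely many components and $\Aut_{(\cdot |\cdot)}(V)$ meets every component. The main obstacle is the key lemma — controlling the positive part inside a compact group — but the reduction to each finite-dimensional $V_n$, together with the self-adjointness and positivity of $p\restriction_{V_n}$, turns it into an elementary boundedness-of-eigenvalues argument; the rest is a direct application of the polar decomposition already established in Proposition \ref{prop:polar}.
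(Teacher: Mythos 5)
Your proof is correct, but it takes a genuinely different route from the paper's. The paper's argument is a two-line reduction to quantum Galois theory: given a compact subgroup $G$ containing $\Aut_{(\cdot |\cdot)}(V)$, Proposition \ref{prop: unitary fixed point} gives $V^G = V^{\Aut_{(\cdot |\cdot)}(V)}$, the hypothesis that $\Aut(V)$ is a finite dimensional Lie group guarantees that $G$ is a compact \emph{Lie} group, and then the Galois correspondence of Dong and Mason \cite[Theorem 3]{DM99} (which applies to compact Lie groups acting on simple VOAs) forces $G = \Aut_{(\cdot |\cdot)}(V)$. You instead combine the polar decomposition of Proposition \ref{prop:polar} with an elementary key lemma: a strictly positive automorphism lying in a compact subgroup must be $1_V$. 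Your gradewise argument for the lemma is sound --- $(a|pa)>0$ real for all $a$ makes each $p\restriction_{V_n}$ positive self-adjoint (a sesquilinear form with real diagonal on a complex space is Hermitian, which is also what legitimizes the functional calculus $p^\alpha = e^{\alpha \log p}$ used in the paper), compactness bounds $\{p^m\restriction_{V_n} : m \in \Z\}$, and boundedness of $\{\lambda^m : m \in \Z\}$ for each positive eigenvalue $\lambda$ forces $\lambda = 1$. What is notable is that your proof never actually uses the finite-dimensional Lie group hypothesis (your closing remark correctly senses that it plays no structural role): you prove the stronger statement that $\Aut_{(\cdot |\cdot)}(V)$ is a maximal compact subgroup of $\Aut(V)$ for \emph{every} simple unitary VOA. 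The trade-off is clear: the paper buys brevity at the cost of invoking the nontrivial Galois correspondence and needing the Lie hypothesis, while your argument is self-contained modulo Proposition \ref{prop:polar} and more general, at the cost of verifying the eigenvalue/boundedness lemma by hand.
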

\begin{proof} Let $G$ be a compact subrgoup of $\Aut(V)$ containing  $\Aut_{(\cdot |\cdot)}(V)$. Then, by Proposition 
\ref{prop: unitary fixed point}, $V^G = V^{\Aut_{(\cdot |\cdot)}(V)}$. Moreover, $G$ is a compact Lie group being a compact subgroup of the finite dimensional Lie group $\Aut(V)$. Hence, $G =  \Aut_{(\cdot |\cdot)}(V)$ by the Galois correspondence in \cite[Theorem 3]{DM99}.
\end{proof}

\begin{prop}  \label{prop:unitary_structure_compact_aut_subgr}
	Let $V$ be a simple unitary VOA and assume $\Aut(V)$ is a finite dimensional Lie group. Then, if  $G$ is a compact subgroup of $\Aut(V)$, then there exists an invariant normalized scalar product $\{\cdot|\cdot\}$ on $V$ such that $G \subset \Aut_{\{\cdot |\cdot \}}(V)$, namely the automorphisms in $G$ are unitary with respect to $\{\cdot|\cdot\}$.
\end{prop}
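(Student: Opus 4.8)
The plan is to deduce the statement from Theorem \ref{theo: unitary maximally compact} together with the conjugacy theory of maximal compact subgroups, rather than by averaging $(\cdot|\cdot)$ over $G$. Averaging would indeed produce a $G$-invariant positive-definite form, but it would typically fail to be \emph{invariant in the VOA sense}: its natural PCT candidate $\int_G g^{-1}\theta g\,\mathrm{d} g$ is antilinear but neither involutive nor multiplicative. The correct move is to transport $(\cdot|\cdot)$ by a single, well-chosen automorphism of $V$.

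First I would record that $\Aut(V)$ is almost connected. Indeed, under the present hypothesis it is a finite dimensional Lie group, so $\Aut(V)/\Aut(V)_0$ is finite by Proposition \ref{prop: quotient Aut}. By Theorem \ref{theo: unitary maximally compact}, $\Aut_{(\cdot|\cdot)}(V)$ is a maximal compact subgroup of $\Aut(V)$. I would then invoke the Cartan--Iwasawa--Mal'cev theorem (in the form valid for Lie groups with finitely many connected components, due to Mostow): every compact subgroup is contained in a maximal compact subgroup, and all maximal compact subgroups are conjugate. Applying this to the compact subgroup $G$ yields an element $h\in\Aut(V)$ with $hGh^{-1}\subset\Aut_{(\cdot|\cdot)}(V)$.

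Fixing such an $h$, I would define $\{a|b\}:=(ha|hb)$ for $a,b\in V$. Positive-definiteness is clear since $h$ is invertible, and normalization holds because $h\Omega=\Omega$, whence $\{\Omega|\Omega\}=(\Omega|\Omega)=1$. The unitarity of $G$ is then immediate: for $g\in G$,
\[
\{ga|gb\}=(hga|hgb)=\big((hgh^{-1})ha\,\big|\,(hgh^{-1})hb\big)=(ha|hb)=\{a|b\},
\]
because $hgh^{-1}\in\Aut_{(\cdot|\cdot)}(V)$ is $(\cdot|\cdot)$-unitary; hence $G\subset\Aut_{\{\cdot|\cdot\}}(V)$.

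It remains to verify that $\{\cdot|\cdot\}$ is a genuine normalized invariant scalar product, and this is the only computation. I would take as PCT candidate $\theta_h:=h^{-1}\theta h$: since $h$ is a VOA automorphism fixing $\Omega$ and the conformal vector $\nu$, the map $\theta_h$ is again an antilinear involutive VOA automorphism fixing $\Omega$ and $\nu$. Writing $\vartheta_z:=e^{\bar z L_1}(-\bar z^{-2})^{L_0}$ for the conformal twist appearing in the invariance relation of \cite[Chapter 5]{CKLW18}, and using that every VOA automorphism commutes with $L_0$ and $L_1$ (it fixes the conformal vector) and satisfies $Y(h c,w)=hY(c,w)h^{-1}$ as well as $\theta(ha)=h\theta_h(a)$, one computes
\[
\{Y(a,z)u\,|\,v\}=\big(hu\,\big|\,Y(\vartheta_z\,\theta(ha),z^{-1})\,hv\big)=\big(hu\,\big|\,h\,Y(\vartheta_z\,\theta_h a,z^{-1})v\big)=\{u\,|\,Y(\vartheta_z\,\theta_h a,z^{-1})v\}.
\]
This is precisely the invariance relation for $\{\cdot|\cdot\}$ with PCT operator $\theta_h$, so $\{\cdot|\cdot\}$ is a normalized invariant scalar product. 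The main conceptual obstacle is thus packaged into the existence of $h$, i.e.\ into the conjugacy of maximal compact subgroups combined with Theorem \ref{theo: unitary maximally compact}; the invariance check is then a routine conjugation of the adjoint formula.
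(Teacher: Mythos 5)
Your proposal is correct and follows essentially the same route as the paper's own proof: both use Theorem \ref{theo: unitary maximally compact} together with the conjugacy of maximal compact subgroups in an almost connected Lie group (the paper cites \cite[Theorem 32.5]{Str06}, \cite[XV Theorem 3.1]{Hoc65} and \cite[Section 1]{HT94} for what you call the Cartan--Iwasawa--Mal'cev/Mostow theorem) to produce $h$ with $hGh^{-1}\subset\Aut_{(\cdot|\cdot)}(V)$, and then transport the scalar product via $\{a|b\}:=(ha|hb)$ with PCT operator $h^{-1}\theta h$. The only difference is that you spell out the invariance verification that the paper merely asserts, which is a welcome but not essential addition.
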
  
\begin{proof} 
By Theorem \ref{theo: unitary maximally compact}  $\Aut_{(\cdot |\cdot)}(V)$ is a maximal compact subgroup of $\Aut(V)$. Moreover, 
$\Aut(V)/\Aut(V)_0$ is finite, cf.\ Proposition \ref{prop: quotient Aut}.
Hence, by \cite[Theorem 32.5]{Str06}, see also \cite[XV Theorem 3.1]{Hoc65} and \cite[Section 1]{HT94}, $G$ is contained in some maximal subgroup of $\Aut(V)$ which must be conjugate to $\Aut_{(\cdot |\cdot)}(V)$. As a consequence,
there is an element $h \in \Aut(V)$ such that $hGh^{-1} \subset \Aut_{(\cdot |\cdot)}(V)$. Let  $\{\cdot|\cdot\}$ be defined by  $\{a|b\} :=  (ha|hb)$ for all $a, b \in V$. 
Then, $\{\cdot|\cdot\}$ is a normalized invariant scalar product on $V$ with PCT operator $\tilde{\theta} := h^{-1}\theta h$. If 
$g \in G$ then $hgh^{-1} \in \Aut_{(\cdot |\cdot)}(V)$ so that $\{ga|gb\} = (hga|hgb)= (ha|hb) = \{a|b\}$ for all $a,b \in V$ and hence 
$g \in \Aut_{\{\cdot |\cdot \}}(V)$.
\end{proof}

\bigskip
\noindent
{\small
{\bf Acknowledgements.}
We would like to thank Giulio Codogni, Bin Gui, Andr\'e Henriques, Corey Jones,  Ching Hung Lam, Dave Penneys, Nils Scheithauer and Wei Yuan for stimulating discussions. We also would like to thank the anonymous referee for useful comments and suggestions. S.C.\ is supported in part by GNAMPA-INDAM. L.G.\ is supported by the European Union's Horizon 2020 research and innovation programme H2020-MSCA-IF-2017 under Grant Agreement 795151 \emph{Beyond Rationality in Algebraic CFT: mathematical structures and models}. S.C.\, T.G.\ and L.G.\ acknowledge support from the \emph{MIUR Excellence Department Project} awarded to the Department of Mathematics, University of Rome ``Tor Vergata'', CUP E83C18000100006, and from the University of Rome ``Tor Vergata'' funding \emph{OAQM}, CUP E83C22001800005. S.C.\ and L.G.\ also acknowledge support from the \emph{MIUR Excellence Department Project MatMod@TOV} awarded to the Department of Mathematics, University of Rome ``Tor Vergata'', CUP E83C23000330006.}  Most of this work was done while T.G. was a postdoctoral fellow at the Department of Mathematics of the University of Rome ``Tor Vergata" within the project ``Algebre di operatori con applicazioni alla teoria di campi quantistici".

\bigskip
\noindent
{\small
\emph{Data sharing is not applicable to this article as no new data were created or analyzed in this study.} }



\begin{thebibliography}{00}


\bibitem[ABD04]{ABD04}
T. Abe, G. Buhl and C. Dong.
Rationality, regularity, and $C_2$-cofiniteness.
\emph{Trans. Amer. Math. Soc.} 
\textbf{356}(8) 
(2004), 3391–3402.

\bibitem[AL17a]{AL17a}
C. Ai and X. Lin.
On the unitary structures of vertex operator superalgebras.
\emph{J. Algebra}
\textbf{487} 
(2017), 217--243.

\bibitem[AL17b]{AL17b}
C. Ai and X. Lin.
The classification of extensions of $L_{\mathfrak{sl}_3}(k,0)$.
\emph{Algebra Colloq.} 
\textbf{24}(3) 
(2017), 407--418.

\bibitem[Ada04]{Ada04}
D. Adamovi{\'c}.
Regularity of certain vertex operator superalgebras.
\emph{Contemp. Math.}
\textbf{343} 
(2004), 1--16.

\bibitem[Ara15a]{Ara15a}
T. Arakawa.
Associated varieties of modules over Kac–Moody algebras and $C_2$-cofiniteness of $W$-algebras. 
\emph{Int. Math. Res. Not.}
\textbf{2015} 
(2015), 11605--11666.

\bibitem[Ara15b]{Ara15b}
T. Arakawa.
Rationality of $W$-algebras: principal nilpotent cases.
\emph{Ann. of Math.}
\textbf{182}(2)
(2015), 565--604.

\bibitem[ACL19]{ACL19}
T. Arakawa, T. Creutzig and A. R. Linshaw.
$W$-algebras as coset vertex algebras.
\emph{Invent. Math. 218}(1)
(2019), 145--195.

\bibitem[ALY14]{ALY14}
T. Arakawa, C. H. Lam and H. Yamada.
Zhu's algebra, $C_2$-algebra and $C_2$-cofiniteness of parafermion vertex operator algebras.
\emph{Adv. Math.}
\textbf{264}
(2014), 261--295.

\bibitem[BLS22]{BLS22}
K. Betsumiya, C. H. Lam and H. Shimakura.
Automorphism groups and uniqueness of holomorphic vertex operator algebras of central charge 24.
\emph{Commun. Math. Phys.} (2022). \href{https://doi.org/10.1007/s00220-022-04585-6}{https://doi.org/10.1007/s00220-022-04585-6}

\bibitem[Bis97]{Bis97}
D. Bisch.
Bimodules, higher relative commutants and the fusion algebra associated to a subfactor. 
\emph{In:} Operator Algebras and Their Applications (Waterloo, ON, 1994/1995)
(\emph{Fields Inst. Commun.} \textbf{13}, Amer. Math. Soc., Providence, RI, USA, 1997), 13--63.

\bibitem[BJ00]{BJ00}
D. Bisch and V. F. R. Jones.
Singly generated planar algebras of small dimension.
\emph{Duke Math. J.}
\textbf{101}(1)
(2000), 41--75.

\bibitem[Bis12]{Bis12}
M. Bischoff.
Models in boundary quantum field theory associated with lattices and loop group models.
\emph{Commun. Math. Phys.} 
\textbf{315}(3) 
(2012), 827--858.

\bibitem[Bis16]{Bis16}
M. Bischoff.
A remark on CFT realization of quantum doubles of subfactors: case index $<4$.
\emph{Lett. Math. Phys.}
\textbf{106}(3) 
(2016), 341--363.

\bibitem[Bis17]{Bis17}
M. Bischoff.
Generalized orbifold construction for conformal nets.
\emph{Rev. Math. Phys.}
\textbf{29}(1) 
(2017), 1750002 (53 pp.).

\bibitem[BCEGP20]{BCEGP20}
M. Bischoff, I. Charlesworth, S. Evington, L. Giorgetti and D. Penneys.
Distortion for multifactor bimodules and representations of multifusion categories.
\href{https://arxiv.org/abs/2010.01067}{\emph{arXiv:2010.01067v1} \textbf{[math.OA]}}.
(2020).

\bibitem[BDG21]{BDG21}
M. Bischoff, S. Del Vecchio and L. Giorgetti.
Compact hypergroups from discrete subfactors.
\emph{J. Funct. Anal.}
\textbf{281}(1)
(2021), 109004 (78 pp.).

\bibitem[BDG22a]{BDG22a}
M. Bischoff, S. Del Vecchio and L. Giorgetti.
Galois correspondence and Fourier analysis on local discrete subfactors.
\emph{Annales Henri Poincaré}
(2022). 

\bibitem[BDG22b]{BDG22b}
M. Bischoff, S. Del Vecchio and L. Giorgetti.
Quantum operations on conformal nets. 
\emph{Rev. Math. Phys.} 
(2022). \href{https://doi.org/10.1142/S0129055X23500071}{https://doi.org/10.1142/S0129055X23500071}

\bibitem[BKL15]{BKL15}
M. Bischoff, Y. Kawahigashi and R. Longo.
Characterization of 2D rational local conformal nets and its boundary conditions: the maximal case.
\emph{Doc. Math.} 
\textbf{20} 
(2015), 1137--1184.

\bibitem[BKLR15]{BKLR15}
M. Bischoff, Y. Kawahigashi, R. Longo and K.-H. Rehren.
\emph{Tensor Categories and Endomorphisms of von Neumann Algebras. With Applications to Quantum Field Theory.}
(Springer, Cham, Switzerland, 2015).

\bibitem[BE98]{BE98}
J. B\"{o}ckenhauer and D. E. Evans.
Modular invariants, graphs and $\alpha$-induction for nets of subfactors. I.
\emph{Commun. Math. Phys.} 
\textbf{197}(2) 
(1998), 361--386. 


\bibitem[BEK99]{BEK99}
J. B\"{o}ckenhauer, D. E. Evans and Y. Kawahigashi.
On $\alpha$-induction, chiral generators and modular invariants for subfactors.
\emph{Commun. Math. Phys.} 
\textbf{208}(2) 
(1999), 429--487.

\bibitem[Bor86]{Bor86} 
R. E. Borcherds. 
Vertex algebras, Kac-Moody algebras, and the Monster. 
\emph{Proc. Natl. Acad. Sci. USA}
\textbf{83}(10) 
(1986), 3068--3071.

\bibitem[Bor92]{Bor92} 
R. E. Borcherds. 
Monstrous moonshine and monstrous Lie superalgebras.
\emph{Invent. Math}
\textbf{109}(2)
(1992), 405--444.


\bibitem[BHP12]{BHP12}
A. Brothier, M. Hartglass and D. Penneys.
Rigid ${C}^*$-tensor categories of bimodules over interpolated free group factors.
\emph{J. Math. Phys.}
\textbf{53}
(2012), 123525 (43 pp.).

\bibitem[BMT88]{BMT88}
D. Buchholz, G. Mack and I. Todorov.
The current algebra on the circle as a germ of local field theories.
\emph{Nuclear Phys. B (Proc. Suppl.)} 
\textbf{5B} 
(1988), 20--56.

\bibitem[CM18]{CM18}
S. Carnahan and M. Miyamoto.
Regularity of fixed-point vertex operator subalgebras.
\href{https://doi.org/10.48550/arXiv.1603.05645}{\emph{arXiv:1603.05645v4} \textbf{[math.RT]}}.
(2018).

\bibitem[Car04]{Car04}
S. Carpi.
On the representation theory of {V}irasoro nets.
\emph{Comm. Math. Phys.} 
\textbf{244} 
(2004), 261--284.

\bibitem[CGH19]{CGH19}
S. Carpi, T. Gaudio and R. Hillier.
Classification of unitary vertex subalgebras and conformal subnets for rank-one lattice chiral CFT models. 
\emph{J. Math. Phys.}
\textbf{60}(9)
(2019), 093505 (20 pp.).


\bibitem[CGH]{CGH}
S. Carpi, T. Gaudio and R. Hillier.
From vertex operator superalgebras to graded-local conformal nets and back.
\emph{In preparation}.


\bibitem[CHKLX15]{CHKLX15}
S. Carpi, R. Hillier, Y. Kawahigashi, R. Longo and F. Xu.
$N=2$ superconformal nets.
\emph{Commun. Math. Phys.}
\textbf{336}(3)
(2015), 1285--1328.


\bibitem[CHL15]{CHL15}
S. Carpi, R. Hillier and R. Longo.
Superconformal nets and noncommutative geometry.
\emph{J. Noncommut. Geom.}
\textbf{9}(2) (2015), 391--445.


\bibitem[CKL08]{CKL08}
S. Carpi, Y. Kawahigashi and R. Longo.
Structure and classification of superconformal nets.
\emph{Ann. Henri Poincaré} 
\textbf{9}(6) 
(2008), 1069--1121.

\bibitem[CKL13]{CKL13}
S. Carpi, Y. Kawahigashi and R. Longo.
How to add a boundary condition. 
\emph{Commun. Math. Phys.}
\textbf{322}(1) 
(2013), 149--166.

\bibitem[CKLW18]{CKLW18}
S. Carpi, Y. Kawahigashi, R. Longo and M. Weiner.
From vertex operator algebras to conformal nets and back.
\emph{Mem. Amer. Math. Soc.}
\textbf{254}(1213)
(2018), (vi+85 pp.).

\bibitem[CTW22]{CTW22}
S. Carpi, Y. Tanimoto and M. Weiner.
Local energy bounds and strong locality in chiral CFT.
\emph{Commun. Math. Phys.}
\textbf{390}(1) 
(2022), 169--192. 


\bibitem[CTW22b]{CTW22b}
S. Carpi, Y. Tanimoto and M. Weiner.
Unitary representations of the $\mathcal{W}_3$-algebra with $c\geq 2$. 
\emph{Transform. Groups} (2022). \href{https://doi.org/10.1007/s00031-022-09699-8}{https://doi.org/10.1007/s00031-022-09699-8}.

\bibitem[CT23]{CT}
S. Carpi and L. Tomassini.
Energy bounds for vertex operator algebra extensions. 
\href{https://doi.org/10.48550/arXiv.2303.14097}{\emph{arXiv:2303.14097} \textbf{[math.QA]}}. (2023).


\bibitem[CWX]{CWX}
S. Carpi, M. Weiner and F. Xu. 
From vertex operator algebra modules to representations of conformal nets. 
\emph{In preparation}.

\bibitem[CHJP22]{CHJP22}
Q. Chen, R. Hernández Palomares, C. Jones and D. Penneys.
Q-system completion for $C^*$ 2-categories.
\emph{J. Funct. Anal.} 
\textbf{283}(3)(109524) 
(2022), (59 pp.).

\bibitem[CLM22]{CLM22}
N. Chigira, C. H. Lam and M. Miyamoto.
Orbifold construction and Lorentzian construction of Leech lattice vertex operator algebra
\emph{J. Algebra}
\textbf{593}
(2022), 26--71.

\bibitem[Cod20]{Cod20}
G. Codogni.
Vertex algebras and Teichm\"{u}ller modular forms. 
\href{https://arxiv.org/abs/1901.03079v2}{\emph{arXiv:1901.03079v2} \textbf{[math.AG]}}. 
(2020). 

\bibitem[CS99]{CS99}
J. H. Conway and N. J. A. Sloane.
\emph{Sphere Packings, Lattices and Groups}. \emph{Third edition}. 
(Springer-Verlag, New York, USA, 1999). 

\bibitem[CFL20]{CFL20}
T. Creutzig, B. Feigin and A. R. Linshaw.
$N=4$ superconformal algebras and diagonal cosets.
\emph{Int. Math. Res. Not.} 
\textbf{2022}(3)
(2020), 2180--2223.

\bibitem[CKL19]{CKL19}
T. Creutzig, S. Kanade and A. R. Linshaw.
Simple current extensions beyond semi-simplicity.
\emph{Commun. Contemp. Math.} 
\textbf{22}(1) 
(2019), 1950001, (49 pp.).

\bibitem[CKLR19]{CKLR19}
T. Creutzig, S. Kanade, A. R. Linshaw and D. Ridout.
Schur-Weyl duality for Heisenberg cosets.
\emph{Transform. Groups} 
\textbf{24}(2) 
(2019), 301--354.

\bibitem[CKM21]{CKM21}
T. Creutzig, S. Kanade and R. McRae.
Tensor categories for vertex operator superalgebra extensions.
\href{https://doi.org/10.48550/arXiv.1705.05017}{\emph{arXiv:1705.05017v2} \textbf{[math.QA]}}.
(2021).
To appear in \emph{Mem. Amer. Math. Soc.}

\bibitem[CL19]{CL19}
T. Creutzig and A. R. Linshaw.
Cosets of affine vertex algebras inside larger structures.
\emph{J. Algebra}
\textbf{517} 
(2019), 396--438.

\bibitem[Dav96]{Dav96}
K. R. Davidson.
$C^*$\emph{-algebras by Examples}.
(Amer. Math. Soc., Providence, RI, USA, 1996).


\bibitem[DK06]{DSK06}
A. De Sole and V. Kac.
Finite vs affine W-algebras.
\emph{Jpn. J. Math.} 
\textbf{1}(1) 
(2006), 137--261.


\bibitem[DG02]{DG02}
C. Dong and R. L. Griess Jr. 
Automorphism groups and derivation algebras of finitely generated vertex operator algebras.
\emph{Michigan Math. J.} 
\textbf{50}(2) 
(2002), 227--239.


\bibitem[DGH98]{DGH98}
C. Dong, R. L. Griess Jr. and G H\"{o}hn.
Framed vertex operator algebras, codes and the Moonshine module.
\emph{Commun. Math. Phys.} 
\textbf{193}(2) 
(1998), 407--448.


\bibitem[DLM97]{DLM97}
C. Dong, H. Li and G. Mason.
Regularity of rational vertex operator algebras.
\emph{Adv. Math.}
\textbf{132}(AI971681)
(1997), 148--166.

\bibitem[DL14]{DL14}
C. Dong and X. Lin.
Unitary vertex operator algebras.
\emph{J. Algebra}
\textbf{397}
(2014), 252--277.

\bibitem[DL15]{DL15}
C. Dong and X. Lin.
The extensions of $L_{sl_2(k,0)}$ and preunitary vertex operator algebras with central charges $c<1$.
\emph{Commun. Math. Phys.} 
\textbf{340}(2) 
(2015), 613--637.

\bibitem[DM97]{DM97}
C. Dong and G. Mason.
On quantum Galois theory.
\emph{Duke Math. J.}
\textbf{86}(2) 
(1997), 305--321.


\bibitem[DM99]{DM99}
C. Dong and G. Mason.
Quantum Galois theory for compact Lie groups.
\emph{J. Algebra} \textbf{214}(1) (1999) 92--102.


\bibitem[DM00]{DM00}
C. Dong and G. Mason.
Monstrous Moonshine of higher weight.
\emph{Acta Math.}
\textbf{185}(1)
(2000), 101--121.


\bibitem[DM04a]{DM04a}
C. Dong and G. Mason.
Holomorphic vertex operator algebras of small central charge.
\emph{Pacific J. Math.}
\textbf{213}(2)
(2004), 253--266.

\bibitem[DM04b]{DM04b}
C. Dong and G. Mason. 
Rational vertex operator algebras and the effective central charge. 
\emph{Int. Math. Res. Not.} 
(56) 
(2004), 2989--3008.

\bibitem[DNR21]{DNR21}
C. Dong, S.-H. Ng and L. Ren.
Vertex operator superalgebras and the 16-fold way.
\emph{Trans. Amer. Math. Soc.}
\textbf{374}(11) 
(2021), 7779--7810.

\bibitem[DR17]{DR17}
C. Dong and L. Ren.
Representations of the parafermion vertex operator algebras.
\emph{Adv. Math.}
\textbf{315}
(2017), 88--101.

\bibitem[DW11]{DW11}
C. Dong and Q. Wang.
Parafermion vertex operator algebras.
\emph{Front. Math China}
\textbf{6}(4)
(2011), 567--579.

\bibitem[DX06]{DX06}
C. Dong and F. Xu.
Conformal nets associated with lattices and their orbifolds.
\textit{Adv. Math.} 
\textbf{206}(1) 
(2006), 279--306.

\bibitem[DZ08]{DZ08}
C. Dong and W. Zhang.
Rational vertex operator algebras are finitely generated.
\emph{J. Algebra} \textbf{320}(6) (2008), 2610--2614.


\bibitem[DHR71]{DHR71}
S. Doplicher, R. Haag and J. E. Roberts.
Local observables and particle statistics. I. 
\emph{Commun. Math. Phys.} 
\textbf{23} 
(1971), 199--230. 

\bibitem[DHR74]{DHR74}
S. Doplicher, R. Haag and J. E. Roberts.
Local observables and particle statistics. II. 
\emph{Commun. Math. Phys.} 
\textbf{35} 
(1974), 49--85.

\bibitem[DR89]{DR89}
S. Doplicher and J. E. Roberts.
A new duality theory for compact groups.
\emph{Invent. Math.}
\textbf{98}(1)
(1989), 157--218.

\bibitem[EMS20]{EMS20}
J. van Ekeren, S. M\"{o}ller and N. R. Sheithauer.
Construction and classification of holomorphic vertex operator algebras.
\emph{J. reine angew. Math.}
\textbf{759}
(2020), 61--99.

\bibitem[ELMS21]{ELMS21}
J. van Ekeren, C. H. Lam, S. M\"{o}ller, H. Shimakura.
Schellekens’ list and the very strange formula.
\emph{Adv. Math.}
\textbf{380}(107567)
(2021), (33 pp.). 

\bibitem[EGNO15]{EGNO15}
P. Etingof, S. Gelaki, D. Nikshych and V. Ostrik.
\emph{Tensor Categories}.
(Amer Math. Soc., Providence, RI, USA, 2015).


\bibitem[EG11]{EG11}
D. E. Evans and T. Gannon. 
The exoticness and realisability of twisted Haagerup-Izumi modular data.
\emph{Commun. Math. Phys.}
\textbf{307}(2)
(2011), 463--512.


\bibitem[EG22]{EG22}
D. E. Evans and T. Gannon. 
Reconstruction and local extensions for twisted group doubles, and permutation orbifolds.
\emph{Trans. Amer. Mat. Soc.}
\textbf{375}(4)
(2022), 2789--2826.


\bibitem[EH78]{EH78}
D. E. Evans and R. H{\o}egh-Krohn.
Spectral properties of positive maps on $C^*$-algebras.
\emph{J. London Math. Soc.} 
\textbf{17}(2) 
(1978), 345--355.


\bibitem[EK98]{EK98} 
D. E. Evans and Y. Kawahigashi. 
\emph{Quantum Symmetries on Operator Algebras.} 
(Oxford University Press, New York, 1998).


\bibitem[FRS89]{FRS89}
K. Fredenhagen, K.-H. Rehren and B. Schroer.
Superselection sectors with braid group statistics and exchange algebras. I. General theory.
\emph{Commun. Math. Phys.}
\textbf{125}(2) 
(1989), 201--226.

\bibitem[FRS92]{FRS92}
K. Fredenhagen, K.-H. Rehren and B. Schroer.
Superselection sectors with braid group statistics and exchange algebras. II. II. Geometric aspects and conformal covariance.
\emph{Rev. Math. Phys.}
\textbf{Special Issue} 
(1992), 113--157.

\bibitem[FHL93]{FHL93}
Y. B. Frenkel, Y.-Z. Huang and J. Lepowsky.
On axiomatic approaches to vertex operator algebras and modules.
\emph{Mem. Amer. Math. Soc.}
\textbf{104}(494)
(1993), (viii+64 pp.).

\bibitem[FLM88]{FLM88}
I. B. Frenkel, J. Lepowsky and A. Meurman.
\emph{Vertex Operator Algebras and the Monster}.
(Academic Press, Inc., London, 1988).

\bibitem[FRS02]{FRS02}
J. Fuchs, I. Runkel and C. Schweigert.
TFT construction of RCFT correlators. I. Partition functions.
\emph{Nuclear Phys. B}
\textbf{646}(3)
(2002), 353--497.

\bibitem[FS03]{FS03}
J. Fuchs and C. Schweigert.
Category theory for conformal boundary conditions. 
\emph{In:} Vertex Operator Algebras in Mathematics and Physics (Toronto, ON, 2000).
(\emph{Fields Inst. Commun.} \textbf{39}, Amer. Math. Soc., Providence, RI, USA, 2003), 25--70.

\bibitem[FS10]{FS10}
J. Fuchs and C. Schweigert.
Hopf algebras and finite tensor categories in conformal field theory.
\emph{ Rev. Un. Mat. Argentina } 
\textbf{51}(2) 
(2010), 43--90.

\bibitem[FS08]{FS08}
J. Fuchs and C. Stigner.
On Frobenius algebras in rigid monoidal categories.
\emph{Arab. J. Sci. Eng. Sect. C Theme Issues} 
\textbf{33}(2) 
(2008), 175--191.

\bibitem[GN03]{GN03}
M. R. Gaberdiel and A. Neitzke. 
Rationality, quasirationality and finite $W$-algebras.
\emph{Commun. Math. Phys.}
\textbf{238}(1-2)
(2003), 305--331.

\bibitem[Gal14]{Gal14}
C. Galindo.
On braided and ribbon unitary fusion categories.
\emph{Canad. Math. Bull.}
\textbf{57}(3)
(2014), 506--510.

\bibitem[Gan06]{Gan06} 
T. Gannon. 
\emph{Moonshine Beyond the Monster. The Bridge Connecting Algebra, Modular Forms and Physics.}
(Cambridge University Press, Cambridge, 2006).

\bibitem[Gau21]{Gau21}
T. Gaudio.
\emph{On the Correspondence Between Graded-Local Conformal Nets and Vertex Operator Superalgebras with Applications.}
(Ph.D. thesis, Lancaster University, UK, 2021, 130 pp.).

\bibitem[GLR85]{GLR85}
P. Ghez, R. Lima and J. E. Roberts.
{$W^\ast$}-categories
\emph{Pacific J. Math.}
\textbf{120}(1)
(1985), 79--109.	

\bibitem[GL19]{GL19}
L. Giorgetti and R. Longo.
Minimal index and dimension for 2-{$C^*$}-categories with finite-dimensional centers.
\emph{Commun. Math. Phys.}
\textbf{370}(2)
(2019), 719--757.

\bibitem[GY19]{GY19}
L. Giorgetti and W. Yuan.
Realization of rigid $C^\ast$-tensor categories via {T}omita bimodules
\emph{J. Operator Theory}
\textbf{81}(2)
(2019), 433--479.

\bibitem[GY23]{GY20}
L. Giorgetti and W. Yuan.
Realization of rigid $C^*$-bicategories as bimodules over type $II_1$ von Neumann algebras.
{\em Adv. Math.} {\bf 415} (2023), 108886.

\bibitem[GKO98]{GKO98}
P. Goddard, A. Kent and D. Olive. 
Unitary representations of the Virasoro and super-Virasoro algebras. 
\textit{Commun. Math. Phys.} 
\textbf{103}(1) 
(1986), 105--119.

\bibitem[GS12]{GS12}
P. Grossman and N. Snyder. 
Quantum subgroups of the Haagerup fusion categories.
\textit{Commun. Math. Phys.} 
\textbf{311}(3) 
(2012), 617--653.

\bibitem[Gui19a]{Gui19a}
B. Gui.
Energy bounds condition for intertwining operators of types $B$, $C$ and $G_2$ unitary affine vertex operator algebras.
\emph{Trans. Amer. Math. Soc.}
\textbf{372}(10)
(2019), 7371--7424.	

\bibitem[Gui19I]{Gui19I}
B. Gui.
Unitarity of the modular tensor categories associated to unitary vertex operator algebras, I. 
\emph{Commun. Math. Phys.}
\textbf{366}(1) 
(2019), 333--396. 	

\bibitem[Gui19II]{Gui19II}
B. Gui.
Unitarity of the modular tensor categories associated to unitary vertex operator algebras, II.
\emph{Commun. Math. Phys.}	
\textbf{372}(3)
(2019), 893--950.


\bibitem[Gui20]{Gui20}
B. Gui.
Unbounded field operators in categorical extensions of conformal nets. 
\href{https://doi.org/10.48550/arXiv.2001.03095}{\emph{arXiv:2001.03095v3} \textbf{[math.QA]}}. (2020).

\bibitem[Gui21a]{Gui21a}
B. Gui.
Categorical extensions of conformal nets.
\emph{Commun. Math. Phys.}
\textbf{383}(2)
(2021), 763--839.	

\bibitem[Gui21b]{Gui21b}
B. Gui.
On a Connes fusion approach to finite index extensions of conformal nets.
\href{https://arxiv.org/abs/2112.15396}{\emph{arXiv:2112.15396v1} \textbf{[math.OA]}}.
(2021). 

\bibitem[Gui22]{Gui21c}
B. Gui.
Q-systems and extensions of completely unitary vertex operator algebras.
\emph{Int. Math. Res. Not.} 
\textbf{2022}(10)
(2022), 7750--7614.


\bibitem[GL96]{GL96}
D. Guido and R. Longo.
The conformal spin and statistics theorem.
\emph{Commun. Math. Phys.}
\textbf{181}(1) 
(1996), 11--35.

\bibitem[Haa96]{Haa96}
R. Haag
\emph{Local Quantum Physics. Second Edition.}
(Springer-Verlag, Berlin-Heidelberg-New York, 1996). 

\bibitem[HMT99]{HMT99}
A. Hanaki, M. Miyamoto and D. Tambara.
Quantum Galois theory for finite groups.
\emph{Duke Math. J.} 
\textbf{97}(3) 
(1999), 541--544.

\bibitem[HY00]{HY00} 
T. Hayashi and S. Yamagami. 
Amenable tensor categories and their realizations as AFD bimodules. 
\emph{J. Funct. Anal.} 
\textbf{172}(1) 
(2000), 19--75.

\bibitem[Hen17]{Hen17}
H. Henriques. 
The classification of chiral WZW models by $H^4_+(BG,\mathbb{Z})$. 
\emph{In:} Lie Algebras, Vertex Operator Algebras and Related Topics.  
(\emph{Contemp. Math.} \textbf{695}, Amer.
Math. Soc., Providence, RI, USA, 2017), 213--232.

\bibitem[HP20]{HP20}
A. Henriques and D. Penneys. 
Representations of fusion categories and their commutants.
\href{https://doi.org/10.48550/arXiv.2004.08271}{\emph{arXiv:2004.08271v1} \textbf{[math.OA]}}.
(2020).

\bibitem[Hoc65]{Hoc65}
G. Hochschild.
\emph{The Structure of Lie Groups.} 
(Holden-Day, Inc., San Francisco, 1965).

\bibitem[HT94]{HT94}
K. H. Hofmann and C. Terp.
Compact subgroups of Lie groups and locally compact groups.
\emph{Proc. Amer. Math. Soc.}
\textbf{120}(2)
(1994), 623--634.


\bibitem[H\"{o}h17]{Hoh17}
G. H\"{o}hn.
On the genus of the Moonshine module.
\href{https://doi.org/10.48550/arXiv.1708.05990}{\emph{arXiv:1708.05990v1} \textbf{[math.QA]}}.
(2017).

\bibitem[HM20]{HM20}
G. H\"{o}hn and S. M\"{o}ller.
Systematic orbifold constructions of Schellekens' vertex operator algebras from Niemeier lattices.
\href{https://doi.org/10.48550/arXiv.2010.00849}{\emph{arXiv:2010.00849v2} \textbf{[math.QA]}}.
(2022). 
To appear in \textit{J. Lond. Math. Soc.}.

\bibitem[Hua95]{Hua95}
Y.-Z. Huang. 
A theory of tensor products for module categories for a vertex operator algebra. IV. 
\emph{J. Pure Appl. Algebra}
\textbf{100}(1-3) 
(1995), 173--216. 

\bibitem[Hua08]{Hua08}
Y.-Z. Huang. 
Rigidity and modularity of vertex tensor categories.
\emph{Commun. Contemp. Math.} 
\text{10}(1) 
(2008), 871--911.

\bibitem[HKL15]{HKL15}
Y.-Z. Huang, A. Kirillov and J. Lepowsky.
Braided tensor categories and extensions of vertex operator algebras.
\emph{Commun. Math. Phys.}
\textbf{337}(3)
(2015), 1143--1159.

\bibitem[HL95I]{HL95I}
Y.-Z. Huang, J. Lepowsky.
A theory of tensor products for module categories for a vertex operator algebra, I.
\emph{Selecta Math. (N.S.)}
\textbf{1}(4) 
(1995), 699--756. 

\bibitem[HL95II]{HL95II}
Y.-Z. Huang, J. Lepowsky.
A theory of tensor products for module categories for a vertex operator algebra, II.
\emph{Selecta Math. (N.S.)}
\textbf{1}(4) 
(1995), 757--786. 

\bibitem[HL95III]{HL95III}
Y.-Z. Huang and J. Lepowsky.
A theory of tensor products for module categories for a vertex operator algebra, III.
\emph{J. Pure Appl. Algebra}
\textbf{100}(1-3)
(1995), 141--171.

\bibitem[Izu17]{Izu17}
M. Izumi. 
A Cuntz algebra approach to the classification of near group categories.
\emph{In:} Proceedings of the 2014 Maui and 2015 Qinhuangdao Conferences in Honour of Vaughan F.R. Jones' 60th Birthday. 
(\emph{Proc. Centre Math. Appl. Austral. Nat. Univ.} \textbf{46}, Austral. Nat. Univ., Canberra, Australia, 2017), 222--243.


\bibitem[JJLRW20]{JJLRW20}
A. Jaffe, C. Jiang, Z. Liu, Y. Ren and J. Wu.
Quantum Fourier analysis.
\emph{Proc. Natl. Acad. Sci. USA} 
\textbf{117}(20) 
(2020), 10715--10720.

\bibitem[JLW16]{JLW16}
C. Jiang, Z. Liu and J. Wu.
Noncommutative uncertainty principles.
\emph{J. Funct. Anal.} 
\textbf{270}(1) 
(2016), 264--311.

\bibitem[Jon83]{Jon83}
V. F. R. Jones.
Index for subfactors.
\emph{Invent. Math.}
\textbf{72}(1)
(1983), 1--25.

\bibitem[Jon99]{Jon99}
V. F. R. Jones.
Planar Algebras, I.
\href{https://arxiv.org/abs/math/9909027v1}{\emph{arXiv:math/9909027v1} \textbf{[math.QA]}}.
(1999).


\bibitem[Kac94]{Kac94}
V. Kac.
\emph{Infinite Dimensional Lie Algebras. Third Edition.}
(Cambridge University Press, 1994).

\bibitem[Kac01]{Kac01}
V. Kac.
\emph{Vertex Algebras for Beginners. Second Edition.} 
(\textit{ULS} \textbf{10}, Amer. Math. Soc., Providence, RI, USA, 2001).

\bibitem[KR87]{KR87}
V. Kac and A. K. Raina,
\emph{Bombay Lectures on Highest Weight Representations of Infinite-Dimensional Lie Algebras.}
(World Scientific Publishing Co., Inc., Teaneck, NJ, 1987).

\bibitem[KRW03]{KRW03}
V. Kac, S.-S. Roan and M. Wakimoto.
Quantum reduction for affine superalgebras.
\emph{Commun. Math. Phys.}
\textbf{241}(2--3)
(2003), 307--342.

\bibitem[Kaw15]{Kaw15}
Y. Kawahigashi. 
Conformal field theory, tensor categories and operator algebras. 
\emph{J. Phys. A: Math. Theor. }(2) 
\textbf{48} 
(2015), 303001 (57 pp.).


\bibitem[Kaw19]{Kaw19}
Y. Kawahigashi. 
Conformal field theory, vertex operator algebras and operator algebras. 
\emph{In:} Proceedings of the International Congress of Mathematicians - Rio de Janeiro 2018. Vol. III.
(World Sci. Publ., 2019), 2597--2616.


\bibitem[KL04]{KL04}
Y. Kawahigashi and R. Longo. 
Classification of local conformal nets. Case $c<1$. 
\emph{Ann. of Math.}(2) 
\textbf{160} 
(2004), 493--522.

\bibitem[KL06]{KL06}
Y. Kawahigashi and R. Longo.
Local conformal nets arising from framed vertex operator algebras.
\textit{Adv. Math.} 
\textbf{206}(2) 
(2006), 729--751.

\bibitem[KLM01]{KLM01}
Y. Kawahigashi, R. Longo and M. M\"{u}ger.
Multi-interval subfactors and modularity of representations in conformal field theory.
\emph{Commun. Math. Phys.}
\textbf{219}(3) 
(2001), 631--669.

\bibitem[KS14]{KS14}
Y. Kawahigashi and N. Suthichitranont.
Construction of holomorphic local conformal framed nets.
\emph{Int. Math. Res. Not.}
\textbf{2014}(11)
(2014), 2924--2943.

\bibitem[KO02]{KO02}
A. Kirillov and V. Ostrik.
On a $q$-analogue of the McKay correspondence and the ADE classification of $\widehat{\mathfrak{sl}}_2$ conformal field theories.
\emph{Adv. Math.}
\textbf{171}(2)
(2002), 183--227.

\bibitem[KR09]{KR09} 
L. Kong and I. Runkel.
Cardy algebras and sewing constraints, I. 
\emph{Commun. Math. Phys.} 
\textbf{292}(3) 
(2009), 871--912.


\bibitem[Lam19]{Lam19}
C. H. Lam.
Cyclic orbifolds of lattice vertex operator algebras having group-like fusions.
\emph{Lett. Math. Phys.}
\textbf{110}(5)
(2019), 1081--1112.


\bibitem[Lam23]{Lam22}
C. H. Lam.
Unitary forms for holomorphic vertex operator algebras of central charge 24.
\emph{Lett. Math. Phys.} \textbf{113} (2023), article number:  28. 
\url{https://doi.org/10.1007/s11005-023-01652-4}


\bibitem[LL20]{LL20}
C. H. Lam and X. Lin.
A holomorphic vertex operator algebra of central charge 24 with the weight one Lie algebra $F_{4,6}A_{2,2}$.
\emph{J. Pure Appl. Algebra} 
\textbf{224}(3) 
(2020), 1241--1279.

\bibitem[LM22]{LM22}
C. H. Lam and M. Miyamoto.
A lattice theoretical interpretation of generalized deep holes of the Leech lattice vertex operator algebra.
\href{https://doi.org/10.48550/arXiv.2205.04681}{\emph{arXiv:2205.04681v1} \textbf{[math.QA]}}.
(2022).

\bibitem[LS15]{LS15}
C. H. Lam and H. Shimakura.
Classification of holomorphic framed vertex operator algebras of central charge 24.
\emph{Amer. J. Math.} 
\textbf{137}(1) 
(2015), 111--137.

\bibitem[LS19]{LS19}
C. H. Lam and H. Shimakura.
71 holomorphic vertex operator algebras of central charge 24.
\emph{Bull. Inst. Math. Acad. Sin. (N. S.)}
\textbf{14}(1)
(2019), 87--118.

\bibitem[LS20]{LS20}
C. H. Lam and H. Shimakura.
Inertia groups and uniqueness of holomorphic vertex operator algebras.
\emph{Transform. Groups}
\textbf{25}(4)
(2020), 1223--1268.

\bibitem[LL04]{LL04}
J. Lepowsky and H. Li.
\emph{Introduction to Vertex Operator Algebras and Their Representation}.
(Birkhäuser Boston, Inc., Boston, 2004).

\bibitem[Li94]{Li94}
H. Li.
\emph{Representation Theory and Tensor Product Theory for Vertex Operator Algebras}.
(Ph.D. Thesis, New Brunswick, USA, 1994).
\href{https://doi.org/10.48550/arXiv.hep-th/9406211}{\emph{arXiv:hep-th/9406211v1}}.

\bibitem[Li96]{Li96}
H. Li.
Local systems of vertex operators, vertex superalgebras and modules.
\emph{J. Pure Appl. Algebra}.
\textbf{109}(2) 
(1996), 143--195.


\bibitem[Lon94]{Lon94}
R. Longo.
A duality for Hopf algebras and for subfactors. I.
\emph{Comm. Math. Phys.}
\textbf{159}(1)
(1994), 133--150.

\bibitem[Lon03]{Lon03}
R. Longo.
Conformal subnets and intermediate subfactors.
\emph{Commun. Math. Phys.}
\textbf{237}(1-2) 
(2003) 7--30.

\bibitem[LR95]{LR95}
R. Longo and K.-H. Rehren.
Nets of subfactors.
\emph{Workshop on Algebraic Quantum Field Theory and Jones Theory} 
(Berlin, 1994). 
\emph{Rev. Math. Phys.}
\textbf{7}(4) 
(1995), 567--597.

\bibitem[LR04]{LR04} 
R. Longo and  K.-H. Rehren.
Local fields in boundary conformal QFT.
\emph{Rev. Math. Phys.}
\textbf{16}(7) 
(2004), 909--960. 

\bibitem[LR09]{LR09}  
R. Longo and  K.-H. Rehren.
How to remove the boundary in CFT - an operator algebraic procedure.
\emph{Commun. Math. Phys.} 
\textbf{285}(3)
(2009), 1165--1182.


\bibitem[LR97]{LR97}
R. Longo and J. E. Roberts.
A theory of dimension
\emph{K-theory}
\textbf{11}
(1997), 103--159.

\bibitem[LX04]{LX04}
R. Longo and F. Xu.
Topological sectors and a dichotomy in conformal field theory.
\emph{Commun. Math. Phys.} 
\textbf{251}(2) 
(2004), 321--364.

\bibitem[Mac98]{Mac98}
S. Mac Lane.
\emph{Categories for the Working Mathematician. Second Edition.}
(Springer-Verlag, New York, USA, 1998).

\bibitem[Mas20]{Mas20}
G. Mason. 
Five not-so-easy pieces: open problems about vertex rings. 
\emph{In:} Vertex Operator Algebras, Number Theory and Related Topics.
(\emph{Contemp. Math.} \textbf{753}, Amer.
Math. Soc., Providence, RI, USA, 2020), pp. 213--232.


\bibitem[Miy04]{Miy04}
M. Miyamoto.
A new construction of the Moonshine vertex operator algebra over the real number field.
\emph{Ann. of Math.}
\textbf{159}(2) 
(2004), 535--596.

\bibitem[Miy15]{Miy15}
M. Miyamoto.
$C_2$-cofiniteness of cyclic-orbifold models.
\emph{Commun. Math. Phys.}
\textbf{335}(3) 
(2015), 1279--1286.

\bibitem[M\"{o}l16]{Mol16}
S. M\"{o}ller.
\emph{A Cyclic Orbifold Theory for Holomorphic Vertex Operator Algebras and Applications}
(Ph.D. thesis, Technische Universität Darmstadt, Germany, minor revision 2020).
\url{https://doi.org/10.26083/tuprints-00017356}.

\bibitem[MS23]{MS20}
S. M\"{o}ller and N. R. Scheithauer.
Dimension formulae and generalised deep holes of the Leech lattice vertex operator algebra.
 {\it Ann. of Math.(2)} {\bf 197} (1)  (2023), 221--288.

\bibitem[MS21]{MS21}
S. M\"{o}ller and N. R. Scheithauer.
A geometric classification of the holomorphic vertex operator algebras of central charge 24.
\href{https://doi.org/10.48550/arXiv.2112.12291}{\emph{arXiv:2112.12291v1} \textbf{[math.QA]}}.
(2021).	

\bibitem[MTW18]{MTW18}
V. Morinelli, Y. Tanimoto and M. Weiner.
Conformal covariance and the split property.
\emph{Commun. Math. Phys.} 
\textbf{357}(1) 
(2018), 379--406.

\bibitem[M{\"u}g00]{Mue00}
M. M{\"u}ger.
Galois theory for braided tensor categories and the modular closure.
\emph{Adv. Math.}
\textbf{150}(2)
(2000), 151--201.

\bibitem[M{\"u}g03]{Mue03}	
M. M{\"u}ger.
From subfactors to categories and topology: I. Frobenius algebras in and Morita equivalence of tensor categories.
\emph{J. Pure Appl. Algebra}
\textbf{180}(1-2)
(2003), 81--157.	

\bibitem[M{\"u}g10a]{Mue10a}	
M. M{\"u}ger.
On superselection theory of quantum fields in low dimensions. 
\emph{XVIth International Congress on Mathematical Physics}
(2010), 496--503.

\bibitem[M{\"u}g10b]{Mue10b}
M. M{\"u}ger.
Tensor categories: a selective guided tour.
\emph{Rev. Un. Mat. Argentina}
\textbf{51}(1)
(2010), 95--163.

\bibitem[Nel72]{Nel72}
E. Nelson.
Time-ordered operator products of sharp-time quadratic forms.
\emph{J. Functional Analysis}
\textbf{11} 
(1972), 211--219.

\bibitem[NY16]{NY16}
S. Neshveyev and M. Yamashita.
Drinfeld center and representation theory for monoidal categories
\emph{Commun. Math. Phys.}
\textbf{345}(1)
(2016), 385--434.

\bibitem[NY18]{NY18}
S. Neshveyev and M. Yamashita.
Categorically Morita equivalent compact quantum groups.
\emph{Doc. Math.}
\textbf{23}
(2018), 2165--2216.  

\bibitem[NT13]{NT13}
S. Neshveyev, L. Tuset.
\emph{Compact quantum groups and their representation categories}.
(\emph{Cours Sp\'{e}cialis\'{e}s [Specialized Courses]} \textbf{20}, Soci\'{e}t\'{e} Math\'{e}matique de France, Paris, 2013).

\bibitem[NW95]{NW95}
F. Nill and H.-W. Wiesbrock.
A comment on Jones inclusions with infinite index.
\emph{Rev. Math. Phys.} 
\textbf{7}(4) 
(1995), 599--630.

\bibitem[Ocn88]{Ocn88}
A. Ocneanu.
Quantized groups, string algebras and Galois theory for algebras. 
\emph{In:} Operator Algebras and Applications, Vol. 2. 
(\emph{LMS Lecture Note Ser.} \textbf{136}, Cambridge Univ. Press, Cambridge, UK, 1988), 119--172,

\bibitem[Ost03]{Ost03}
V. Ostrik.
Module categories, weak Hopf algebras and modular invariants.
\emph{Transform. Groups} 
\textbf{8}(2) 
(2003), 177--206.

\bibitem[Pop95]{Pop95}
S. Popa.
\emph{Classification of Subfactors and Their Endomorphisms}. 
(\emph{CBMS Regional Conference Series in Mathematics} \textbf{86}, Amer. Math. Soc., Providence, RI, USA, 1995).

\bibitem[Reh90]{Reh90}
K.-H. Rehren.
Braid group statistics and their superselection rules. 
\emph{In:} The Algebraic Theory of Superselection Sectors (Palermo, 1989).
(World Sci. Publ., River Edge, NJ, 1990), 333--335.

\bibitem[Reh94]{Reh94}
K.-H. Rehren.
Subfactors and coset models. 
\emph{In:} Generalized Symmetries in Physics.
(World Sci. Publ., River Edge, USA, 1994), 338--356.

\bibitem[Reu19]{Reu19}	
D. Reutter.
Uniqueness of unitary structure for unitarizable fusion categories.
\href{https://arxiv.org/abs/1906.09710}{\emph{arXiv:1906.09710v2} \textbf{[math.QA]}}.	
(2019).

\bibitem[Sch93]{Sch93}
A. N. Schellekens.
Meromorphic $c=24$ conformal field theories.
\emph{Commun. Math. Phys.}
\textbf{153}(1)
(1993), 159--185.

\bibitem[Sta95]{Sta95}
C. P. Staszkiewicz.
\emph{Die Lokale Struktur Abelscher Stromalgebren auf dem Kreis}.
(Ph.D. Thesis, Freie Universit\"{a}t Berlin, Germany, 1995).

\bibitem[Str06]{Str06}
M. Stroppel.
\emph{Locally Compact Groups.} 
(EMS, Z\"{u}rich, Switzerland, 2006).


\bibitem[Ten19a]{Ten19a}
J. E. Tener.
Fusion and positivity in chiral conformal field theory.
\href{https://doi.org/10.48550/arXiv.1910.08257}{\emph{arXiv:1910.08257v2} \textbf{[math-ph]}}.
(2019). 

\bibitem[Ten19b]{Ten19b}
J. E. Tener.
Geometric realization of algebraic conformal field theories.
\emph{Adv. Math.}
\textbf{349} 
(2019), 488--563.

\bibitem[Was95]{Was95} 
A. Wassermann. 
Operator algebras and conformal field theory. 
\emph{In:} Proceedings of the International Congress
of Mathematicians, Vol.\ 1,2 (Z\"{u}rich, Switzerland, 1994). 
(Birkh\"{a}user, Basel, Switzerland, 1995), 966--979.

\bibitem[Was98]{Was98}
A. Wassermann.
Operator algebras and conformal field theory. III. Fusion of positive energy representations of $\operatorname{LSU}(N)$ using bounded operators.
\emph{Invent. Math.} 
\textbf{133}(3) 
(1998), 467--538. 


\bibitem[Xu09]{Xu09}
F. Xu.
An application of mirror extensions.
\emph{Commun. Math. Phys.}
\textbf{290}(1) 
(2009), 83--103.

\bibitem[Xu18]{Xu18}
F. Xu.
Examples of subfactors from conformal field theory.
\emph{Commun. Math. Phys.}
\textbf{357}(1) 
(2018), 61--75.

\bibitem[Yam03]{Yam03}
S. Yamagami.
$C^\ast$-tensor categories and free product bimodules.
\emph{J. Funct. Anal.}
\textbf{197}(2)
(2003), 323--346.

\bibitem[Yam04a]{Yam04a}
S. Yamagami.
Frobenius algebras in tensor categories and bimodule extensions. 
\emph{In:} Galois Theory, Hopf Algebras, and Semiabelian Categories.
(\emph{Fields Inst. Commun.} \textbf{43}, Amer. Math. Soc., Providence, RI, USA, 2004), 551--570.

\bibitem[Yam04b]{Yam04b}
S. Yamagami.
Frobenius duality in $C^*$-tensor categories.
\emph{J. Operator Theory}
\textbf{52}(1)
(2004), 3--20.


\end{thebibliography}
\end{document}